\tikzset{>=stealth}
\tikzset{->-/.style={decoration={
  markings,
  mark=at position .5 with {\arrow[scale=1.5]{>}}},postaction={decorate}}}
\tikzset{-<-/.style={decoration={
  markings,
  mark=at position .5 with {\arrow[scale=1.5]{<}}},postaction={decorate}}}
\tikzset{->--/.style={decoration={
  markings,
  mark=at position .3 with {\arrow[scale=1.5]{<}}},postaction={decorate}}}
\tikzset{-<--/.style={decoration={
  markings,
  mark=at position .3 with {\arrow[scale=1.5]{>}}},postaction={decorate}}}
\tikzset{-->-/.style={decoration={
  markings,
  mark=at position .7 with {\arrow[scale=1.5]{<}}},postaction={decorate}}}
\tikzset{--<-/.style={decoration={
  markings,
  mark=at position .7 with {\arrow[scale=1.5]{>}}},postaction={decorate}}}
\newtheorem{theorem}{Theorem}[section]
\newtheorem{lemma}[theorem]{Lemma}
\newtheorem{example}[theorem]{Example}
\newtheorem{corollary}[theorem]{Corollary}
\newtheorem{proposition}[theorem]{Proposition}
\def\Z{\mathbb{Z}}
\date{}
\begin{document}

\title{Oriented Local Moves and\\Divisibility of the Jones Polynomial}

\author{
Paul Drube\\[-2ex]
\footnotesize Department of Mathematics \& Statistics\\[-2.5ex]
\footnotesize Valparaiso University\\[-2.5ex] 
\footnotesize\tt paul.drube@valpo.edu
\and
Puttipong Pongtanapaisan\\[-2ex]
\footnotesize Department of Mathematics\\[-2.5ex]
\footnotesize University of Iowa\\[-2ex]
\footnotesize\tt puttipong-pongtanapaisan@uiowa.edu
}

\maketitle

\begin{abstract}
For any virtual link $L = S \cup T$ that may be decomposed into a pair of oriented $n$-tangles $S$ and $T$, an oriented local move of type $T \mapsto T'$ is a replacement of $T$ with the $n$-tangle $T'$ in a way that preserves the orientation of $L$.  After developing a general decomposition for the Jones polynomial of the virtual link $L = S \cup T$ in terms of various (modified) closures of $T$, we analyze the Jones polynomials of virtual links $L_1,L_2$ that differ via a local move of type $T \mapsto T'$.  Succinct divisibility conditions on $V(L_1)-V(L_2)$ are derived for broad classes of local moves that include the $\Delta$-move and the double-$\Delta$-move as special cases.  As a consequence of our divisibility result for the double-$\Delta$-move, we introduce a necessary condition for any pair of classical knots to be $S$-equivalent.
\end{abstract}

\section{Introduction}
For any link $L$, the Jones polynomial $V(L) \in \Z[t^{1/2},t^{-1/2}]$ is a Laurent polynomial in the variable $t^{1/2}$.  After being introduced by Jones himself \cite{Jones}, the Jones polynomial was recast by Kauffman in terms of his bracket polynomial \cite{Kauffman1} .  For any unoriented link diagram $L$, the bracket polynomial $\langle L \rangle \in \Z[A,A^{-1}]$ is an invariant of framed links that may be defined recursively via the local relations shown below.

\begin{center}
\raisebox{9pt}{\scalebox{2}{$\langle$}}
\raisebox{6pt}{\scalebox{.45}{\begin{tikzpicture}
[scale=1.5,auto=left,every node/.style={circle,inner sep=0pt}]
\draw[line width=1.4pt] (-.5,-.5) to (.5,.5) {};
\draw[line width=1.4pt] (.5,-.5) to (.1,-.1) {};
\draw[line width=1.4pt] (-.1,.1) to (-.5,.5) {};
\end{tikzpicture}}}
\raisebox{9pt}{\scalebox{2}{$\rangle$} $\ = \ A$ \scalebox{2}{$\langle$}}
\raisebox{6pt}{\scalebox{.45}{\begin{tikzpicture}
[scale=1.5,auto=left,every node/.style={circle,inner sep=0pt}]
\draw[bend left=80,line width=1.4pt] (.5,-.5) to (.5,.5) {};
\draw[bend right=80,line width=1.4pt] (-.5,-.5) to (-.5,.5) {};
\end{tikzpicture}}}
\raisebox{9pt}{\scalebox{2}{$\rangle$} $\ + \ A^{-1}$ \scalebox{2}{$\langle$}}
\raisebox{6pt}{\scalebox{.45}{\begin{tikzpicture}
[scale=1.5,auto=left,every node/.style={circle,inner sep=0pt}]
\draw[bend left=80,line width=1.4pt] (-.5,-.5) to (.5,-.5) {};
\draw[bend left=80,line width=1.4pt] (.5,.5) to (-.5,.5) {};
\end{tikzpicture}}}
\raisebox{9pt}{\scalebox{2}{$\rangle$}}

\vspace{.1in}

\raisebox{9pt}{\scalebox{2}{$\langle$}}
\raisebox{4pt}{\begin{tikzpicture}
[scale=1,auto=left,every node/.style={circle,inner sep=0pt}]
\node[draw,line width=.8pt,inner sep=6.5pt] (v) at (0,0) {};
\end{tikzpicture}}
\raisebox{9pt}{$ \cup \ L$ \scalebox{2}{$\rangle$} $\ = \ (-A^2-A^{-2})$ \scalebox{2}{$\langle$}} \kern-3pt
\raisebox{9pt}{{$L$}} \kern-3pt
\raisebox{9pt}{\scalebox{2}{$\rangle$}} 

\vspace{.1in}

\raisebox{9pt}{\scalebox{2}{$\langle$}}
\raisebox{4pt}{\begin{tikzpicture}
[scale=1,auto=left,every node/.style={circle,inner sep=0pt}]
\node[draw,line width=.8pt,inner sep=6.5pt] (v) at (0,0) {};
\end{tikzpicture}}
\raisebox{9pt}{\scalebox{2}{$\rangle$} $\ = \ 1$ }
\end{center}

For an oriented link diagram with the bracket polynomial $\langle L \rangle$, one may obtain the Jones polynomial of the associated link by evaluating $f(L) = (-A^3)^{-w(L)} \langle L \rangle$ at $A = t^{-1/4}$, where $w(L)$ is the writhe of $L$.  We henceforth refer to the intermediate polynomial $f(L)$ as the auxiliary polynomial of $L$.

%Also introduced by Kauffman \cite{Kauffman0} was a state sum formulation of the bracket polynomial, as shown in \eqref{eq: Kauffman state sum}.  Here the summation runs over all states $s$ of the diagram $D$, $\vert s \vert$ denotes the number of closed curves in the state $s$, and $a(s)$ (resp. $b(s)$) denotes the number of $a$-smoothings ($b$-smoothings) in $s$.

%\begin{equation}
%\label{eq: Kauffman state sum}
%\langle D \rangle = \sum_s A^{a(s)-b(s)} (-1)^{\vert s \vert - 1} \left( A^{-2} + A^2 \right)^{\vert s \vert - 1}
%\end{equation}

The rest of this paper assumes a basic familiarity with the Jones polynomial and the Kauffman bracket.  For more information on these topics, see Kauffman \cite{Kauffman1} or Lickorish \cite{Lickorish}.

The Jones polynomial was subsequently generalized to virtual links by Kauffman \cite{Kauffman2}.  The resulting virtual link invariant, sometimes referred to as the Jones-Kauffman polynomial, may be defined in terms of the Kauffman bracket using the same local relations as above and the same evaluation of $f(L) = (-A^3)^{-w(L)} \langle L \rangle$ at $A = t^{-1/4}$.  For a full discussion of virtual links and their topological importance, consult the surveys \cite{Kauffman2,Kauffman3}. 

Now consider the unoriented virtual link diagram $D$, and suppose that $D = S \cup T$ may be decomposed into the pair of $n$-tangles $S$ and $T$.\footnote{Throughout this paper, we use a generalized notion of tangle that allows for closed components without endpoints on the boundary.}  An (unoriented) local move of type $T \mapsto T'$ is a replacement of $T$ with the $n$-tangle $T'$ while leaving $S$ unchanged, transforming $D$ into a diagram $D' = S \cup T'$ of some (possibly distinct) virtual link.  Local moves include operations as ubiquitous as the simple crossing change (on $2$-tangles), the $\Delta$-move (on $3$-tangles), and the so-called forbidden moves of virtual links (on $3$-tangles).  An oriented local move of type $T \mapsto T'$ is a replacement of the oriented $n$-tangle $T$ with the oriented $n$-tangle $T'$ in a way that preserves the orientation of all endpoints of $T$.

The primary goal of this paper is to investigate how the auxiliary polynomial of a virtual link behaves under a variety of oriented local moves.  In particular, we consider any pair of oriented links $L_1,L_2$ that differ via a finite sequence of some fixed move, and develop divisibility conditions for the auxiliary polynomial $f(L_1)-f(L_2)$.  This places a necessary condition upon whether a given pair of links may be connected via repeated application of a particular local move and, in the case where $L_1$ is a knot and $L_2$ is unknot, may be used to show that the move in question is not an unknotting move.

Divisibility conditions of the type above date back to Jones \cite{Jones}, who showed $f(K_1)-f(K_2)$ is divisible by $A^{16} - A^{12} - A^4 + 1$ for any pair of classical knots $K_1,K_2$.  Our methods more closely follow that of Ganzell \cite{Ganzell}, who used the bracket polynomial to find divisibility conditions for $f(K_1) - f(K_2)$ when $K_1,K_2$ were a pair of knots connected by various (unoriented) local moves.  Ganzell showed that $f(K_1) - f(K_2)$ is divisible by $A^{12}-1$ for any pair of classical knots that differ by a crossing change, that $f(K_1) - f(K_2)$ is divisible by $A^{16}-A^{12}-A^{4}+1$ for any pair of classical knots that differ by a $\Delta$-move, and that $f(K_1) - f(K_2)$ is divisible by $A^{10}-A^6-A^4+1$ for any pair of virtual knots that differ by a forbidden move.  For additional results of a similar type see Nikkuni \cite{Nikkuni}, who showed that $f(L_1) - f(L_2)$ is divisible by $(A^{-4}-1)^n(A^{-8}+A^{-4}+1)(A^{-8}+1)$ for any pair of oriented classical links $L_1,L_2$ that differ by a $C_n$ move (for every $n \geq 3$).

Now fix the local move $T \mapsto T'$, and for some collection of links $\mathcal{S}$ consider all pairs $L_1,L_2 \in \mathcal{S}$ that are related via a finite sequence of moves of fixed type $T \mapsto T'$.  We say that $p(A) \in \Z[A,A^{-1}]$ is a maximal divisor for $\mathcal{S}$ with respect to $T \mapsto T'$ if, whenever $q(A) \in \Z[A,A^{-1}]$ divides every polynomial of the form $f(L_1)-f(L_2)$, then $q(A)$ divides $p(A)$.  Note that one may immediately conclude that $p(A)$ is a maximal divisor if there exist $L_1,L_2 \in \mathcal{S}$ such that $f(L_1) - f(L_2) = p(A)$.  Such links have been found for every divisor mentioned in the previous paragraph, proving their maximality within the stated collection of links \cite{Jones,Ganzell,Nikkuni}.

Our results differ from those of Ganzell \cite{Ganzell} in that all of our local moves are oriented.  This narrows the classes of links that may be connected via repeated application of a given move, and our divisibility conditions for $f(L_2)-f(L_1)$ need not extend to any pair of links that differ via an unoriented version of the same move.  On the other hand, dealing with oriented moves allows us to more easily tackle local moves with a large number of outgoing strands.  Observe that a (maximal) divisor for some unoriented local move $T \mapsto T'$ may be obtained by separately determining a (maximal) divisor for every orienation that is compatible with both $T$ and $T'$, and then taking the greatest common divisor of those polynomials.

\subsection{Outline}

This paper is organized as follows.  In Section \ref{sec: decompositions of the Jones polynomial} we introduce our general technique for decomposing the auxiliary polynomial of an arbitrary virtual link of the form $T \cup T'$.  Theorem \ref{thm: auxiliary polynomial decomposition} gives $f(T \cup T') = \sum_{m \in \mathcal{P}_n} q_m f(\widetilde{T}^B(m))$, where the $\widetilde{T}^B(m)$ represent the various closures of $T$ (via every 2-equal matching $m$ in $\mathcal{P}_n$) and $q_m \in \Z[A,A^{-1}]$ are unspecified Laurent polynomials that depend upon the structure of $T'$.

In Section \ref{sec: local moves}, we apply Theorem \ref{thm: auxiliary polynomial decomposition} to find maximal divisors for a variety of oriented local moves.  Subsection \ref{subsec: rotational local moves} focuses upon local moves that involve rotation of a classical $n$-tangle by a fixed number of strands.  Subsections \ref{subsec: double-delta} and \ref{subsec: virtual rotation} present lengthier treatments for a pair of local moves that do not conform to the methods of Subsection \ref{subsec: rotational local moves}, namely the double-$\Delta$-move for classical $6$-tangles and a rotational move for virtual $2$-tangles.  In the case of the double-$\Delta$-move this prompts an intriguing new result on S-equivalence of knots, with Corollary \ref{thm: S-equivalence divisibility} stating that two classical knots $K_1,K_2$ may be S-equivalent only if $f(K_1) - f(K_2)$ is divisible by $A^{36} - A^{32} + A^{28} - A^{24} - A^{12} + A^8 - A^4 + 1$.
 
\section{Decompositions of the Jones Polynomial}
\label{sec: decompositions of the Jones polynomial}

For any $n \geq 1$, consider the set $[2n]= \lbrace 1,2,\hdots,2n \rbrace$.  A 2-equal partition of $[2n]$ is a partition of $[2n]$ into $n$ disjoint sets of size $2$.  Every 2-equal partition $P$ may be associated with a (2-equal) matching on the circle, in which the element $i \in [2n]$ corresponds to the point along the unit circle with radial coordinate $\theta = -\frac{\pi i}{n}$, and the points corresponding to $i$ and $j$ are connected via an arc within the unit circle if and only if $i$ and $j$ belong to the same block of $P$.  We denote the set of all such matchings on $2n$ points by $\mathcal{P}_n$.

An element of $\mathcal{P}_n$ is said to be noncrossing if it may be drawn so that no two arcs intersect.  We denote the set of all noncrossing (2-equal) matchings on $2n$ points by $\mathcal{M}_n$.  It is well known that $\vert \mathcal{M}_n \vert = \frac{1}{n+1} \binom{2n}{n}$, the $n^{th}$ Catalan number.  We henceforth refer to any matching via the blocks of the associated partition.  See Figure \ref{fig: basic matchings example} for basic examples.

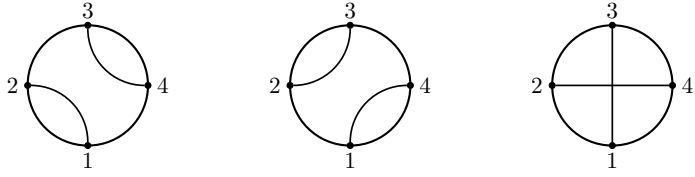
\begin{figure}[ht!]
\centering
\scalebox{.8}{
\begin{tikzpicture}
[scale=1,auto=left,every node/.style={circle,inner sep=0pt}]
	\draw[line width=1pt] (0,0) circle (1cm);
	\node[draw,fill,inner sep=1pt] (1) at (-90:1) {};
	\node[draw,fill,inner sep=1pt] (2) at (-180:1) {};
	\node[draw,fill,inner sep=1pt] (3) at (-270:1) {};
	\node[draw,fill,inner sep=1pt] (4) at (-360:1) {};
	\node (1l) at (-90:1.25) {\small{1}};
	\node (2l) at (-180:1.25) {\small{2}};
	\node (3l) at (-270:1.25) {\small{3}};
	\node (4l) at (-360:1.25) {\small{4}};	
	\draw[thick, bend right=45] (1) to (2);
	\draw[thick, bend right=45] (3) to (4);
\end{tikzpicture}
\hspace{.5in}
\begin{tikzpicture}
[scale=1,auto=left,every node/.style={circle,inner sep=0pt}]
	\draw[line width=1pt] (0,0) circle (1cm);
	\node[draw,fill,inner sep=1pt] (1) at (-90:1) {};
	\node[draw,fill,inner sep=1pt] (2) at (-180:1) {};
	\node[draw,fill,inner sep=1pt] (3) at (-270:1) {};
	\node[draw,fill,inner sep=1pt] (4) at (-0:1) {};
	\node (1l) at (-90:1.25) {\small{1}};
	\node (2l) at (-180:1.25) {\small{2}};
	\node (3l) at (-270:1.25) {\small{3}};
	\node (4l) at (-360:1.25) {\small{4}};	
	\draw[thick, bend left=45] (1) to (4);
	\draw[thick, bend right=45] (2) to (3);
\end{tikzpicture}
\hspace{.5in}
\begin{tikzpicture}
[scale=1,auto=left,every node/.style={circle,inner sep=0pt}]
	\draw[line width=1pt] (0,0) circle (1cm);
	\node[draw,fill,inner sep=1pt] (1) at (-90:1) {};
	\node[draw,fill,inner sep=1pt] (2) at (-180:1) {};
	\node[draw,fill,inner sep=1pt] (3) at (-270:1) {};
	\node[draw,fill,inner sep=1pt] (4) at (-360:1) {};
	\node (1l) at (-90:1.25) {\small{1}};
	\node (2l) at (-180:1.25) {\small{2}};
	\node (3l) at (-270:1.25) {\small{3}};
	\node (4l) at (-360:1.25) {\small{4}};	
	\draw[thick] (1) to (3);
	\draw[thick] (2) to (4);
\end{tikzpicture}}
\caption{The three elements $m_1 = ((1,2),(3,4))$, $m_2 = ((1,4),(2,3))$, $m_3 = ((1,3),(2,4))$ of $\mathcal{P}_2$, among which $m_1$ and $m_2$ also belong to $\mathcal{M}_2$}
\label{fig: basic matchings example}
\end{figure}

In all that follows, we assume that matchings have been drawn such that no two arcs intersect more than once and no three arcs have a common intersection.  Given these conditions, there exists an obvious bijection between $\mathcal{P}_n$ and the set of unoriented virtual $n$-tangles with zero classical crossings.  For any $m \in \mathcal{P}_n$, a diagram of the associated tangle $T_m$ may be obtained by replacing all intersections in $m$ with virtual crossings and interpreting the unit circle as the tangle boundary.  When referring to $T_m$, we will always take a diagram in which the endpoint corresponding to $i$ has radial coordinate $\theta = -\frac{\pi i}{n}$.

Now take any virtual $n$-tangle $T$.  Our formalism involving 2-equal matchings is motivated by the fact that every Kauffman state of $T$ is isotopic to $T_m$ for some $m \in \mathcal{P}_n$.  Gathering terms from the Kauffman state sum that resolve to the same $T_m$, this implies that $\langle T \rangle$ decomposes as $\langle T \rangle = \sum_{m \in \mathcal{P}_n} p_m \langle T_m \rangle$, where each $p_m \in \Z[A,A^{-1}]$ is a Laurent polynomial that depends upon the structure of $T$.  If our tangle $T$ lacks virtual crossings, this decomposition clearly reduces to $\langle T \rangle = \sum_{m \in \mathcal{M}_n} p_m \langle T_m \rangle$.

Directly pertinent to this paper is the situation where a virtual link $L$ may be decomposed into the two $n$-tangles $T$ and $T'$.  In this case, we always take a diagram of $L$ in which $T$ appears as described above, and then order the endpoints of $T'$ so that the $i^{th}$ endpoint of $T'$ is identified with the $i^{th}$ endpoint of $T$.  Here we adopt the shorthand $L = T \cup T'$.

For $L = T \cup T'$, observe that smoothing every crossing in $T'$ (while leaving $T$ unchanged) produces a virtual link $T(m) = T \cup T'_m$ for some $m \in \mathcal{P}_n$.  We refer to this link as the closure of $T$ by $m$.  Diagrammatically, note that $T(m)$ may be obtained from $T$ by inverting all arcs of $m$ across the unit circle, replacing all intersections in the resulting matching with virtual crossings, and attaching the $i^{th}$ endpoint of $m$ to the $i^{th}$ external strand of $T$.

See Figure \ref{fig: 4-strand closures} for an illustration of every closure for an arbitrary $2$-tangle $T$.  In the particular case of a $2$-tangle, notice that the two closures without virtual crossings correspond to the numerator closure and denominator closure of $T$.

\begin{figure}[ht!]
\centering
\scalebox{.75}{
\begin{tikzpicture}
[scale=1,auto=left,every node/.style={circle,inner sep=0pt}]
\draw[line width=3pt] (0,0) circle (1cm);
\node (T) at (0:0) {\Huge{\textbf{T}}};
\node (1) at (-90:1) {};
\node (2) at (-180:1) {};
\node (3) at (-270:1) {};
\node (4) at (-360:1) {};
\node (1*) at (-90:.75) {\small{1}};
\node (2*) at (-180:.75) {\small{2}};
\node (3*) at (-270:.75) {\small{3}};
\node (4*) at (-360:.75) {\small{4}};
\draw[line width=1.4pt] (1) circle arc (-20:-250:.82);
\draw[line width=1.4pt] (3) circle arc (-200:-430:.82);
\node (T1) at (90:2.75) {\Large{$\mathbf{T(m_1)}$}};
\end{tikzpicture}
\hspace{.5in}
\begin{tikzpicture}
[scale=1,auto=left,every node/.style={circle,inner sep=0pt}]
\draw[line width=3pt] (0,0) circle (1cm);
\node (T) at (0:0) {\Huge{\textbf{T}}};
\node (1) at (-90:1) {};
\node (2) at (-180:1) {};
\node (3) at (-270:1) {};
\node (4) at (-360:1) {};
\node (1*) at (-90:.75) {\small{1}};
\node (2*) at (-180:.75) {\small{2}};
\node (3*) at (-270:.75) {\small{3}};
\node (4*) at (-360:.75) {\small{4}};
\draw[line width=1.4pt] (1) circle arc (-160:70:.82);
\draw[line width=1.4pt] (3) circle arc (-340:-110:.82);
\node (T2) at (90:2.75) {\Large{$\mathbf{T(m_2)}$}};
\end{tikzpicture}
\hspace{.5in}
\raisebox{5pt}{
\begin{tikzpicture}
[scale=1,auto=left,every node/.style={circle,inner sep=0pt}]
\draw[line width=3pt] (0,0) circle (1cm);
\node (T) at (0:0) {\Huge{\textbf{T}}};
\node (1) at (-90:1) {};
\node (2) at (-180:1) {};
\node (3) at (-270:1) {};
\node (4) at (-360:1) {};
\node (1*) at (-90:.75) {\small{1}};
\node (2*) at (-180:.75) {\small{2}};
\node (3*) at (-270:.75) {\small{3}};
\node (4*) at (-360:.75) {\small{4}};
\draw[line width=1.4pt] (1) arc (-50:-310:1.31);
\draw[line width=1.4pt] (2) arc (-140:-400:1.31);
\node[draw,line width=.8pt,inner sep=4pt] (v) at (-225:1.74) {};
\node (T3) at (90:2.75) {\Large{$\mathbf{T(m_3)}$}};
\end{tikzpicture}}}

\vspace{.05in}

\caption{The three closures of the $2$-tangle $T$, corresponding to the matchings $m_1 = ((1,2),(3,4))$, $m_2 = ((1,4),(2,3))$, and $m_3 = ((1,3),(2,4))$ .}
\label{fig: 4-strand closures}
\end{figure}
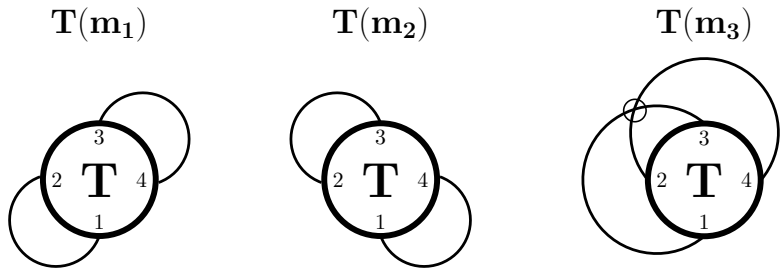

Similarly to how $\langle T \rangle$ may be written in terms of the $\langle T_m \rangle$, the bracket polynomial for the link $\langle T \cup T' \rangle$ may be written in terms of the $\langle T(m) \rangle$.  See Fish and Keyman \cite{FishKeyman} for a distinct derivation of a result equivalent to Proposition \ref{thm: bracket polynomial decomposition}.

\begin{proposition}
\label{thm: bracket polynomial decomposition}
Let $L = T \cup T'$ be any virtual link that has been decomposed into a pair of $n$-tangles $T$ and $T'$.  Then

$$\langle T \cup T' \rangle = \sum_{m \in \mathcal{P}_n} q_m \langle T(m) \rangle$$

\noindent where the $q_m \in \Z[A,A^{-1}]$ are Laurent polynomials that depend upon the structure of $T'$.
\end{proposition}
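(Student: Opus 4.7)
The plan is to apply the Kauffman state sum \emph{only} to the classical crossings of $T'$, leaving $T$ and every virtual crossing of $L$ untouched. Suppose $T'$ has $c$ classical crossings. Applying the first bracket skein relation once at each such crossing gives
\[
\langle T\cup T'\rangle \;=\; \sum_{s\in\{A,B\}^c} A^{a(s)-b(s)}\,\langle T\cup T'_s\rangle,
\]
where $a(s)$ and $b(s)$ denote the number of $A$- and $B$-smoothings in the state $s$, and $T'_s$ is the $n$-tangle inside the disk previously occupied by $T'$ obtained by performing those smoothings.

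Next I would unpack each $T'_s$. Because every classical crossing has been resolved, $T'_s$ consists of some number $\ell(s)\geq 0$ of closed loops sitting inside the tangle disk, together with a collection of arcs that pair up the $2n$ boundary endpoints and possibly pass through virtual crossings. The boundary arcs define a $2$-equal matching $m(s)\in\mathcal{P}_n$, and by the bijection between $\mathcal{P}_n$ and crossingless virtual $n$-tangles recorded earlier in the section, the boundary-arc part of $T'_s$ is isotopic (via virtual Reidemeister moves that do not disturb $T$) to the standard tangle $T'_{m(s)}$. Since the $\ell(s)$ closed loops lie in the old disk of $T'$ and are disjoint from $T$, the second bracket relation lets me pull out a scalar factor $(-A^2-A^{-2})^{\ell(s)}$, so that $\langle T\cup T'_s\rangle = (-A^2-A^{-2})^{\ell(s)}\,\langle T(m(s))\rangle$.

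Collecting states by their associated matching then yields the claimed decomposition with
\[
q_m \;=\; \sum_{s\,:\,m(s)=m} A^{a(s)-b(s)}(-A^2-A^{-2})^{\ell(s)} \;\in\; \Z[A,A^{-1}],
\]
each of which depends only on the combinatorial data of $T'$ (its crossings and which states yield the matching $m$), and not on $T$.

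The main obstacle I anticipate is diagrammatic rather than algebraic: I must justify that the boundary-arc portion of each state $T'_s$ really is isotopic, inside the tangle disk and rel boundary, to the canonical representative $T'_{m(s)}$ described in the excerpt, and that the endpoint-ordering convention is preserved so that gluing $T$ back in produces precisely $T(m(s))$ rather than some re-permutation. For classical $T'$ this is immediate; in the virtual setting I would invoke the detour move to move virtual crossings among smoothed arcs to the standard form, after which the polynomial identity follows by collection of terms.
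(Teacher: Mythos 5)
Your proof is correct and fleshes out exactly the argument the paper sketches: the paper explicitly gives this state-sum-gathering argument for the decomposition of $\langle T\rangle$ and then asserts the analogous decomposition for $\langle T\cup T'\rangle$ with a pointer to Fish--Keyman, and your proposal is precisely that analogue, resolving only the classical crossings of $T'$, pulling out $(-A^2-A^{-2})$ for each closed loop, and collecting states by the induced matching $m(s)$. The only point worth making explicit (which you do flag via the detour move) is that a closed loop or boundary-arc configuration in $T'_s$ with only virtual crossings can be normalized inside the $T'$-disk to the standard crossingless tangle $T'_{m(s)}$ without disturbing $T$.
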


We wish to translate Proposition \ref{thm: bracket polynomial decomposition} into a result involving the auxiliary polynomial $f(T \cup T')$.  The difficulty is that this must be done in a way that doesn't require internal knowledge of $T$.  In particular, once we declare a specific orientation for $L = T \cup T'$, many closures $T(m)$ may fail to be compatible with that orientation.  Simply defining $f(T(m))$ in those cases would require a reorientation of some proper subset of the strands from $T$, an action whose effect on the writhe may require internal knowledge of $T$.

One way of avoiding this problem is to work with oriented and disoriented resolutions at a real crossing, so that the Kauffman states are purely virtual magnetic graphs.  See Kamada and Miyazawa \cite{Kamada} and Miyazawa \cite{Miyazawa} for results involving the resulting generalization of the Kauffman-Jones polynomial.  Unfortunately, working with purely virtual magnetic graphs significantly complicates much of what follows, and we instead use the Kauffman skein relation to systematically replace all problematic closures with diagrams that respect the desired orientation.

So consider any word $\vec{v}$ of length $2n$ that features exactly $n$ instances of $+$ and $n$ instances of $-$, and let $v_i$ denote the $i^{th}$ letter of $\vec{v}$.  We say that the $2n$-tangle $T$ has orientation $\vec{v}$ if its $i^{th}$ endpoint has an outbound orientation precisely when $v_i = +$.  For any tangle $T$ of orientation $\vec{v}$, we construct a braid $B_{\vec{v}}$ on $2n$ strands as follows:

\begin{enumerate}
\item Identify the longest initial subword $s$ of $\vec{v}$ that is of the form $(+-)^k$ or $(+-)^k+$.
\item If $\vert s \vert = 2n$, terminate the procedure.  If $\vert s \vert < 2n$, identify the smallest index $j > \vert s \vert$ such that $v_j \neq v_{\vert s \vert + 1}$ and add $\sigma_{j-1} \sigma_{j-2} \hdots \sigma_{\vert s \vert +1}$ to the end of $B_{\vec{v}}$.
\item Define $\vec{v}\kern+2pt '$ to be the length $2n$ word whose letters satisfy $v'_{\vert s \vert +1} = v_j$, $v'_i = v_{i-1}$ for $\vert s \vert + 2 \leq i \leq j$, and $v'_i = v_i$ otherwise.  Then return to Step \#1 using $\vec{v} = \vec{v}\kern+2pt '$.
\end{enumerate}

As the new word $\vec{v} \kern+2pt '$ in Step \#3 always has a longer initial subword of the required form than did $\vec{v}$, the procedure above terminates after a finite number of steps.  See Figure \ref{fig: n=2 and n=3 orientation braids} for the braids $B_{\vec{v}}$ associated with each (fundamentally distinct) orientation $\vec{v}$ on $4$ or $6$ endpoints.

\begin{figure}[ht!]
\centering
\begin{tikzpicture}
[scale=1,auto=left,every node/.style={circle,inner sep=0pt}]
	\draw[line width=1.4pt] (0.25,0) to (2.25,0);
	\draw[line width=1pt] (.5,0) to (.5,-1.2);
	\draw[line width=1pt] (1,0) to (1,-1.2);
	\draw[line width=1pt] (1.5,0) to (1.5,-1.2);
	\draw[line width=1pt] (2,0) to (2,-1.2);
	\draw[line width=1.4pt] (0.25,-1.2) to (2.25,-1.2);
	\node (a1) at (.5,-1.45) {\small{+}};
	\node (a2) at (1,-1.45) {\small{-}};
	\node (a3) at (1.5,-1.45) {\small{+}};
	\node (a4) at (2,-1.45) {\small{-}};
	\node (b1) at (.5,.25) {\small{+}};
	\node (b2) at (1,.25) {\small{-}};
	\node (b3) at (1.5,.25) {\small{+}};
	\node (b4) at (2,.25) {\small{-}};
	\node (label) at (1.25,-2) {$B_{\vec{v}} = \text{id}$};
\end{tikzpicture}
\hspace{.2in}
\raisebox{0.5pt}{
\begin{tikzpicture}
[scale=1,auto=left,every node/.style={circle,inner sep=0pt}]
	\draw[line width=1.4pt] (0.25,0) to (2.25,0);
	\braid[number of strands=4,line width=1pt,width=14pt,height=20pt] a_2;
	\draw[line width=1.4pt] (0.25,-1.2) to (2.25,-1.2);
	\node (a1) at (.5,-1.45) {\small{+}};
	\node (a2) at (1,-1.45) {\small{+}};
	\node (a3) at (1.5,-1.45) {\small{-}};
	\node (a4) at (2,-1.45) {\small{-}};
	\node (b1) at (.5,.25) {\small{+}};
	\node (b2) at (1,.25) {\small{-}};
	\node (b3) at (1.5,.25) {\small{+}};
	\node (b4) at (2,.25) {\small{-}};
	\node (label) at (1.25,-1.95) {$B_{\vec{v}} = \sigma_2$};
\end{tikzpicture}}

\begin{tikzpicture}
[scale=1,auto=left,every node/.style={circle,inner sep=0pt}]
	\draw[line width=1.4pt] (0.25,0) to (3.25,0);
	\draw[line width=1pt] (.5,0) to (.5,-1.8);
	\draw[line width=1pt] (1,0) to (1,-1.8);
	\draw[line width=1pt] (1.5,0) to (1.5,-1.8);
	\draw[line width=1pt] (2,0) to (2,-1.8);
	\draw[line width=1pt] (2.5,0) to (2.5,-1.8);
	\draw[line width=1pt] (3,0) to (3,-1.8);	
	\draw[line width=1.4pt] (0.25,-1.8) to (3.25,-1.8);
	\node (a1) at (.5,-2.05) {\small{+}};
	\node (a2) at (1,-2.05) {\small{-}};
	\node (a3) at (1.5,-2.05) {\small{+}};
	\node (a4) at (2,-2.05) {\small{-}};
	\node (a5) at (2.5,-2.05) {\small{+}};
	\node (a6) at (3,-2.05) {\small{-}};
	\node (b1) at (.5,.25) {\small{+}};
	\node (b2) at (1,.25) {\small{-}};
	\node (b3) at (1.5,.25) {\small{+}};
	\node (b4) at (2,.25) {\small{-}};
	\node (b5) at (2.5,.25) {\small{+}};
	\node (b6) at (3,.25) {\small{-}};
	\node (label) at (1.75,-2.55) {$B_{\vec{v}} = \text{id}$};
\end{tikzpicture}
\hspace{.2in}
\raisebox{-1pt}{
\begin{tikzpicture}
[scale=1,auto=left,every node/.style={circle,inner sep=0pt}]
	\draw[line width=1.4pt] (0.25,0) to (3.25,0);
	\braid[number of strands=6,line width=1pt,width=14pt,height=36pt] a_2;
	\draw[line width=1.4pt] (0.25,-1.8) to (3.25,-1.8);
	\node (a1) at (.5,-2.05) {\small{+}};
	\node (a2) at (1,-2.05) {\small{+}};
	\node (a3) at (1.5,-2.05) {\small{-}};
	\node (a4) at (2,-2.05) {\small{-}};
	\node (a5) at (2.5,-2.05) {\small{+}};
	\node (a6) at (3,-2.05) {\small{-}};
	\node (b1) at (.5,.25) {\small{+}};
	\node (b2) at (1,.25) {\small{-}};
	\node (b3) at (1.5,.25) {\small{+}};
	\node (b4) at (2,.25) {\small{-}};
	\node (b5) at (2.5,.25) {\small{+}};
	\node (b6) at (3,.25) {\small{-}};
	\node (label) at (1.75,-2.55) {$B_{\vec{v}} = \sigma_2$};
\end{tikzpicture}}
\hspace{.2in}
\raisebox{-7.5pt}{
\begin{tikzpicture}
[scale=1,auto=left,every node/.style={circle,inner sep=0pt}]
	\draw[line width=1.4pt] (0.25,0) to (3.25,0);
	\braid[number of strands=6,line width=1pt,width=14pt,height=18pt] a_4 a_2;
	\draw[line width=1.4pt] (0.25,-1.8) to (3.25,-1.8);
	\node (a1) at (.5,-2.05) {\small{+}};
	\node (a2) at (1,-2.05) {\small{+}};
	\node (a3) at (1.5,-2.05) {\small{-}};
	\node (a4) at (2,-2.05) {\small{+}};
	\node (a5) at (2.5,-2.05) {\small{-}};
	\node (a6) at (3,-2.05) {\small{-}};
	\node (b1) at (.5,.25) {\small{+}};
	\node (b2) at (1,.25) {\small{-}};
	\node (b3) at (1.5,.25) {\small{+}};
	\node (b4) at (2,.25) {\small{-}};
	\node (b5) at (2.5,.25) {\small{+}};
	\node (b6) at (3,.25) {\small{-}};
	\node (label) at (1.75,-2.55) {$B_{\vec{v}} = \sigma_2 \kern+2pt \sigma_4$};
\end{tikzpicture}}
\hspace{.2in}
\raisebox{-14pt}{
\begin{tikzpicture}
[scale=1,auto=left,every node/.style={circle,inner sep=0pt}]
	\draw[line width=1.4pt] (0.25,0) to (3.25,0);
	\braid[number of strands=6,line width=1pt,width=14pt,height=12pt] a_4 a_2 a_3;
	\draw[line width=1.4pt] (0.25,-1.8) to (3.25,-1.8);
	\node (a1) at (.5,-2.05) {\small{+}};
	\node (a2) at (1,-2.05) {\small{+}};
	\node (a3) at (1.5,-2.05) {\small{+}};
	\node (a4) at (2,-2.05) {\small{-}};
	\node (a5) at (2.5,-2.05) {\small{-}};
	\node (a6) at (3,-2.05) {\small{-}};
	\node (b1) at (.5,.25) {\small{+}};
	\node (b2) at (1,.25) {\small{-}};
	\node (b3) at (1.5,.25) {\small{+}};
	\node (b4) at (2,.25) {\small{-}};
	\node (b5) at (2.5,.25) {\small{+}};
	\node (b6) at (3,.25) {\small{-}};
	\node (label) at (1.75,-2.55) {$B_{\vec{v}} = \sigma_3 \kern+2pt \sigma_2 \kern+2pt \sigma_4$};
\end{tikzpicture}}

\vspace{-.4in}

\caption{Up to cyclic permutation, the distinct orientation braids $B_{\vec{v}}$ on $4$ and $6$ endpoints.}
\label{fig: n=2 and n=3 orientation braids}
\end{figure}
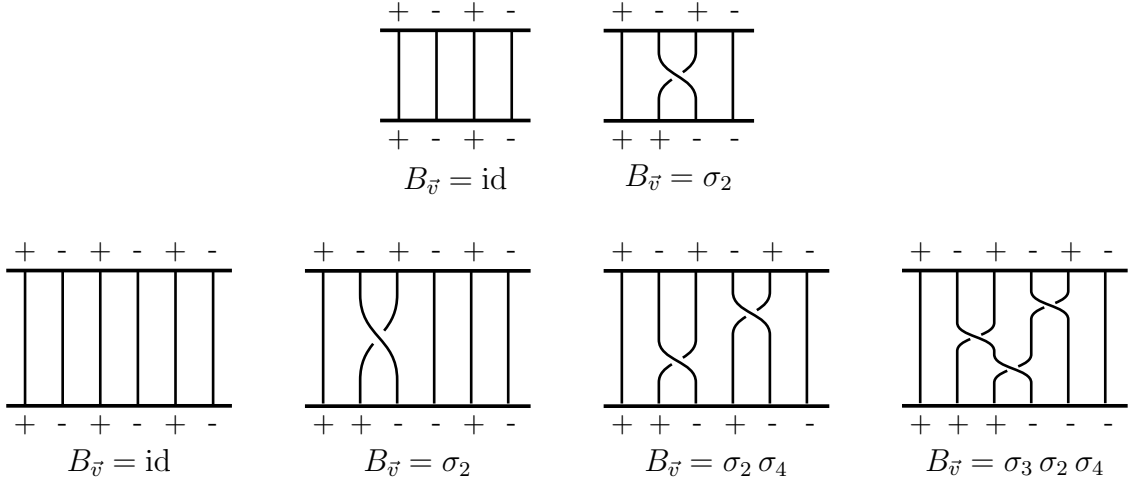

Now take the virtual link $L = T \cup T'$, and assume that $L$ has been oriented in such a way that $T$ has orientation $\vec{v}$.  Then identify the $i^{th}$ endpoint of $T$ with the bottom of the $i^{th}$ strand of $B_{\vec{v}}$.  This produces an oriented $n$-tangle $T^B$ whose endpoints alternate between inbound and outbound strands, a situation that we henceforth refer to as the ``standard orientation" for an $n$-tangle.  For any closure $T(m)$ of $T$, there exists an associated closure $T^B(m)$ of $T^B$ that is produced by attaching $T(m)-T$ to the endpoints of $T^B$.

As one final modification to ensure that our closures respect the orientation $\vec{v}$, we transform each closure $T^B(m)$ into $\widetilde{T}^B(m)$ by replacing the neighborhood of every virtual crossing in $T^B(m)-T^B$ as shown in Figure \ref{fig: virtual crossing replacement}.  The labels on the left side of that figure indicate the endpoints of $T^B$ to which each strand is eventually attached, where we assume that $a_1 < a_2 < a_3 < a_4$ to ensure that the local operation is well-defined.

\begin{figure}[ht!]
\centering
\scalebox{.85}{
\raisebox{2pt}{
\begin{tikzpicture}
[scale=1,auto=left,every node/.style={circle,inner sep=0pt}]
	\draw[line width=1.4pt] (-.5,-.65) to (.5,.65) {};
	\draw[line width=1.4pt] (.5,-.65) to (-.5,.65) {};
	\node[draw,line width=.8pt,inner sep=4pt] (v) at (0,0) {};
	\node (1*) at (.8,.8) {$a_1$};
	\node (2*) at (.8,-.8) {$a_2$};
	\node (3*) at (-.8,-.8) {$a_3$};
	\node (4*) at (-.8,.8) {$a_4$};
\end{tikzpicture}}
\hspace{.1in}
\raisebox{22pt}{
\scalebox{3}{$\Rightarrow$}}
\hspace{.15in}
\begin{tikzpicture}
[scale=1,auto=left,every node/.style={circle,inner sep=0pt}]
	\draw[line width=1.4pt,bend left=50] (-.1,-1.1) to (0,0) {};
	\draw[line width=1.4pt,bend right=50] (0,-1.2) to (0,0) {};
	\draw[line width=1.4pt] (0,0) to (-.5,.5) {};
	\draw[line width=1.4pt] (0,0) to (.5,.5) {};
	\draw[line width=1.4pt] (-.5,-1.7) to (0,-1.2) {};
	\draw[line width=1.4pt] (.5,-1.7) to (.1,-1.3) {};
	\node[draw,line width=.8pt,inner sep=4pt] (v) at (0,0) {};
\end{tikzpicture}}
\caption{Replacing the neighborhood of a virtual crossing in $T^B(m)-T^B$.}
\label{fig: virtual crossing replacement}
\end{figure}
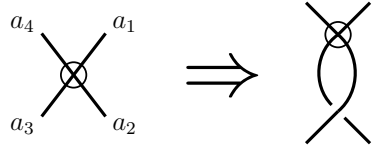

Lemma \ref{thm: auxiliary polynomial decomposition, lemma 1} shows that the original closures $T(m)$ from the decomposition of Proposition \ref{thm: bracket polynomial decomposition} may be swapped out for the modified closures $\widetilde{T}^B(m)$:

\begin{lemma}
\label{thm: auxiliary polynomial decomposition, lemma 1}
Let $T$ be an $n$-tangle with orientation $\vec{v}$, and take any $m \in \mathcal{P}_n$.  Then there exist $p_\mu \in \Z[A,A^{-1}]$ such that
$$\langle T(m) \rangle = \sum_{\mu \in \mathcal{P}_n} p_{\mu} \langle \widetilde{T}^B(\mu) \rangle$$
\end{lemma}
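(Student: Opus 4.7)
The plan is to re-express the diagram of $T(m)$ by inserting a copy of $B_{\vec{v}} B_{\vec{v}}^{-1}$, which reduces to the identity braid, in the region between the endpoints of $T$ and the arcs of $m$.  Since the Kauffman bracket is invariant under the Reidemeister II moves needed to cancel the pair $B_{\vec{v}} B_{\vec{v}}^{-1}$, this insertion leaves $\langle T(m) \rangle$ unchanged.  After the insertion, we may regard the resulting diagram as $T^B$ (formed by $T$ with $B_{\vec{v}}$ attached at its endpoints) equipped with the altered closure obtained by following $B_{\vec{v}}^{-1}$ immediately by $m$.

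Next I would apply the Kauffman skein relation to each of the finitely many classical crossings of $B_{\vec{v}}^{-1}$.  Every such crossing contributes a factor of $A$ or $A^{-1}$ times one of its two smoothings, and after fully resolving all such crossings we obtain a finite expansion of $\langle T(m) \rangle$ as a $\Z[A,A^{-1}]$-linear combination of brackets of diagrams of the form $T^B$ capped off by the composition of a planar (crossing-free) smoothing $(B_{\vec{v}}^{-1})_s$ with $m$.

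Each composition of a planar smoothing with $m$ resolves into a $2$-equal matching $\mu_s \in \mathcal{P}_n$ on the $2n$ endpoints of $T^B$, possibly together with some number of disjoint trivial loops.  Absorbing any such loops via the $(-A^2 - A^{-2})$ relation modifies the coefficient by a Laurent polynomial factor but preserves the remaining closure, so each term takes the form $c_s \langle T^B(\mu_s) \rangle$ for some $c_s \in \Z[A,A^{-1}]$.  Since the Kauffman bracket is invariant under virtual detour moves and the local replacement in Figure \ref{fig: virtual crossing replacement} converting $T^B(\mu)$ into $\widetilde{T}^B(\mu)$ is accomplished by such detour moves, we have $\langle T^B(\mu) \rangle = \langle \widetilde{T}^B(\mu) \rangle$ for every $\mu \in \mathcal{P}_n$.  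Collecting terms with a common $\mu_s$ then yields the Laurent polynomials $p_\mu$ in the desired decomposition.

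The step I expect to require the most care is the combinatorial bookkeeping in the third paragraph, namely verifying that the composition of a planar smoothing of $B_{\vec{v}}^{-1}$ with $m$ always produces a well-defined matching $\mu_s \in \mathcal{P}_n$ (plus some disjoint loops), and confirming that any virtual crossings inherited from $m$ may be transported by virtual detour moves into the canonical local configuration prescribed by Figure \ref{fig: virtual crossing replacement}.  These verifications are essentially routine given that planar smoothings of a braid consist only of vertical strands and cap-cup pairs and that the Kauffman bracket is insensitive to the precise placement of virtual crossings.
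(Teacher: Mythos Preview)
Your first two paragraphs are fine, and in fact the $B_{\vec{v}} B_{\vec{v}}^{-1}$ insertion is a clean way to reach the intermediate decomposition $\langle T(m)\rangle=\sum_{\mu}c_\mu\langle T^B(\mu)\rangle$; the paper does this instead by peeling off one braid generator of $B_{\vec v}$ at a time and re-expressing via the skein relation, but the two arguments are interchangeable.

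The genuine gap is in your third paragraph: the passage from $T^B(\mu)$ to $\widetilde{T}^B(\mu)$ is \emph{not} accomplished by virtual detour moves, so it is false that $\langle T^B(\mu)\rangle=\langle\widetilde{T}^B(\mu)\rangle$.  Look again at Figure~\ref{fig: virtual crossing replacement}: the local replacement sends a single virtual crossing to a configuration containing a virtual crossing \emph{and a new classical crossing}.  Consequently $\widetilde{T}^B(\mu)$ has strictly more classical crossings than $T^B(\mu)$ whenever $\mu\notin\mathcal{M}_n$, and their brackets differ.  The paper handles this by resolving that extra classical crossing via the skein relation, obtaining
\[
\langle\widetilde{T}^B(m)\rangle \;=\; A^{k}\langle T^B(m)\rangle \;+\; \sum_{\alpha} q_\alpha \langle D_\alpha\rangle,
\]
where each $D_\alpha$ has fewer virtual crossings outside $T^B$ than $T^B(m)$ does.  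One then inverts this relation and inducts on the number of virtual crossings in $T^B(m)-T^B$ to express each $\langle T^B(m)\rangle$ as a $\Z[A,A^{-1}]$-combination of the $\langle\widetilde{T}^B(\mu)\rangle$.  Your proposal is missing precisely this inductive step; without it the argument does not reach the modified closures $\widetilde{T}^B(\mu)$ at all.
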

\begin{proof}
For any $m \in \mathcal{P}_n$, we first find $q_\mu \in \Z[A,A^{-1}]$ such that $\langle T(m) \rangle = \sum_{\mu \in \mathcal{P}_n} q_\mu \langle T^B(\mu) \rangle$.  For any $m \in \mathcal{P}_n$, we then provide $\widetilde{q}_\mu \in \Z[A,A^{-1}]$ such that $\langle T^B(m) \rangle = \sum_{\mu \in \mathcal{P}_n} \widetilde{q}_\mu \langle \widetilde{T}^B(\mu) \rangle$.

So assume $B_{\vec{v}} = \sigma_{i_1} \sigma_{i_2} \hdots \sigma_{i_M}$, and let $b_k = \sigma_{i_1} \sigma_{i_2} \hdots \sigma_{i_k}$ be the initial subword of $B_{\vec{v}}$ of length $k$.  For each $0 \leq k \leq M$, we define $T^{b_k}$ to be the $n$-tangle created by identifying the $i^{th}$ endpoint of $T$ with the bottom of the $i^{th}$ strand of $b_k$, so that $T^{b_0} = T$ and $T^{b_M} = T^B$.  For fixed $1 \leq k \leq M$ and any $m \in \mathcal{P}_n$ we demonstrate there exist $q_\mu \in \Z[A,A^{-1}]$ such that $\langle T^{b_{k-1}}(m) \rangle = \sum_{\mu \in \mathcal{P}_n} q_\mu \langle T^{b_k}(\mu) \rangle$.

Take $T^{b_k}(m)$, and consider the neighborhood of the final crossing $\sigma_k$ from $b_k$ in $T^{b_k}(m)$, located just inside the boundary of $T^{b_k}$.  The Kauffman-Jones skein relation gives the following, where the horizontal line denotes the external boundary of $T^{b_k}$ and the $-A^{\pm3}$ term is determined by the writhe of the nugatory crossing introduced on the right side.

\begin{center}
\raisebox{9pt}{\scalebox{2}{$\langle$}} \kern-4pt
\scalebox{.45}{\begin{tikzpicture}
[scale=1,auto=left,every node/.style={circle,inner sep=0pt}]
\draw[line width=1.4pt,bend right=20] (.1,-1.1) to (.5,.5) {};
\draw[line width=1.4pt,bend left=20] (0,-1.2) to (-.5,.5) {};
\draw[line width=1.4pt] (.5,-1.7) to (0,-1.2) {};
\draw[line width=1.4pt] (-.5,-1.7) to (-.1,-1.3) {};
\draw[dotted,thick] (-1,-.5) to (1,-.5) {};
\end{tikzpicture}} \kern-4pt
\raisebox{9pt}{\scalebox{2}{$\rangle$} $\ = \ A$ \scalebox{2}{$\langle$}} \kern-4pt
\scalebox{.45}{\begin{tikzpicture}
[scale=1,auto=left,every node/.style={circle,inner sep=0pt}]
\draw[line width=1.4pt,bend left=80] (-.5,-1.7) to (.5,-1.7) {};
\draw[line width=1.4pt,bend right=80] (-.5,-.7) to (.5,-.7) {};
\draw[line width=1.4pt] (-.5,-.7) to (-.5,.5) {};
\draw[line width=1.4pt] (.5,-.7) to (.5,.5) {};
\draw[dotted,thick] (-1,-.5) to (1,-.5) {};
\end{tikzpicture}} \kern-4pt
\raisebox{9pt}{\scalebox{2}{$\rangle$} $\ + \ A^{-1}$ \scalebox{2}{$\langle$}} \kern-4pt
\scalebox{.45}{\begin{tikzpicture}
[scale=1,auto=left,every node/.style={circle,inner sep=0pt}]
\draw[line width=1.4pt,bend right=15] (-.5,-1.7) to (-.5,.5) {};
\draw[line width=1.4pt,bend left=15] (.5,-1.7) to (.5,.5) {};
\draw[dotted,thick] (-1,-.5) to (1,-.5) {};
\end{tikzpicture}} \kern-4pt
\raisebox{9pt}{\scalebox{2}{$\rangle$} $\ = \ A (-A^{\pm3})$ \scalebox{2}{$\langle$}} \kern-4pt
\scalebox{.45}{\begin{tikzpicture}
[scale=1,auto=left,every node/.style={circle,inner sep=0pt}]
\draw[line width=1.4pt,bend right=10] (.1,-1.1) to (.5,-.4) {};
\draw[line width=1.4pt,bend left=10] (0,-1.2) to (-.5,-.4) {};
\draw[line width=1.4pt] (.5,-1.7) to (0,-1.2) {};
\draw[line width=1.4pt] (-.5,-1.7) to (-.1,-1.3) {};
\draw[line width=1.4pt,bend right=80] (.5,-.4) to (-.5,-.4) {};
\draw[line width=1.4pt,bend left=80] (.5,.5) to (-.5,.5) {};
\draw[dotted,thick] (-1,-.5) to (1,-.5) {};
\end{tikzpicture}} \kern-4pt
\raisebox{9pt}{\scalebox{2}{$\rangle$} $\ + \ A^{-1}$ \scalebox{2}{$\langle$}} \kern-4pt
\scalebox{.45}{\begin{tikzpicture}
[scale=1,auto=left,every node/.style={circle,inner sep=0pt}]
\draw[line width=1.4pt,bend right=15] (-.5,-1.7) to (-.5,.5) {};
\draw[line width=1.4pt,bend left=15] (.5,-1.7) to (.5,.5) {};
\draw[dotted,thick] (-1,-.5) to (1,-.5) {};
\end{tikzpicture}} \kern-4pt
\raisebox{10pt}{\scalebox{2}{$\rangle$}}

\vspace{.15in}

\raisebox{8pt}{\scalebox{2.5}{$\Rightarrow$}} \hspace{.1in}
\raisebox{9pt}{\scalebox{2}{$\langle$}} \kern-4pt
\scalebox{.45}{\begin{tikzpicture}
[scale=1,auto=left,every node/.style={circle,inner sep=0pt}]
\draw[line width=1.4pt,bend right=15] (-.5,-1.7) to (-.5,.5) {};
\draw[line width=1.4pt,bend left=15] (.5,-1.7) to (.5,.5) {};
\draw[dotted,thick] (-1,-.5) to (1,-.5) {};
\end{tikzpicture}} \kern-4pt
\raisebox{9pt}{\scalebox{2}{$\rangle$} $\ = \ A$ \scalebox{2}{$\langle$}} \kern-4pt
\scalebox{.45}{\begin{tikzpicture}
[scale=1,auto=left,every node/.style={circle,inner sep=0pt}]
\draw[line width=1.4pt,bend right=20] (.1,-1.1) to (.5,.5) {};
\draw[line width=1.4pt,bend left=20] (0,-1.2) to (-.5,.5) {};
\draw[line width=1.4pt] (.5,-1.7) to (0,-1.2) {};
\draw[line width=1.4pt] (-.5,-1.7) to (-.1,-1.3) {};
\draw[dotted,thick] (-1,-.5) to (1,-.5) {};
\end{tikzpicture}} \kern-4pt
\raisebox{9pt}{\scalebox{2}{$\rangle$} $\ - \ A^{2} (-A^{\pm3})$ \scalebox{2}{$\langle$}} \kern-4pt
\scalebox{.45}{\begin{tikzpicture}
[scale=1,auto=left,every node/.style={circle,inner sep=0pt}]
\draw[line width=1.4pt,bend right=10] (.1,-1.1) to (.5,-.4) {};
\draw[line width=1.4pt,bend left=10] (0,-1.2) to (-.5,-.4) {};
\draw[line width=1.4pt] (.5,-1.7) to (0,-1.2) {};
\draw[line width=1.4pt] (-.5,-1.7) to (-.1,-1.3) {};
\draw[line width=1.4pt,bend right=80] (.5,-.4) to (-.5,-.4) {};
\draw[line width=1.4pt,bend left=80] (.5,.5) to (-.5,.5) {};
\draw[dotted,thick] (-1,-.5) to (1,-.5) {};
\end{tikzpicture}} \kern-4pt
\raisebox{9pt}{\scalebox{2}{$\rangle$}}
\end{center}

Notice that the first term in the second equation is simply $\langle T^{b_{k-1}}(m) \rangle$.  Since the diagram associated with the final term of the second equation lacks classical crossings away from $T^{b_k}$, after the removal of trivial split components and nugatory crossings it must be equivalent to $T^{b_k}(\mu)$ for some $\mu \in \mathcal{P}_n$.  Thus $\langle T^{b_{k-1}}(m) \rangle = A \kern+1pt \langle T^{b_k}(m) \rangle - A^2 (-A^{-2}-A^{2})^{t_1} (-A^3)^{t_2} \kern+1pt \langle T^{b_k}(\mu) \rangle$ for some $t_1 \geq 0$, $t_2 \in \Z$, and $\mu \in \mathcal{P}_n$.  Repeatedly applying this result until reaching $k = M$ allows us to conclude that $\langle T(m) \rangle = \sum_{\mu \in \mathcal{P}_n} q_\mu \langle T^B(\mu) \rangle$ for some $q_\mu \in \mathcal{P}_n$.

Now consider the set of closures $\lbrace T^B(m) \rbrace_{m \in \mathcal{P}_n}$, and let $S_k$ denote the subset of those links that feature precisely $k$ virtual crossings away from $T$.  We induct on $k \geq 0$, showing that any $T^B(m) \in S_k$ may be written as $\langle T^B(m) \rangle = \sum_{\mu \in \mathcal{P}_n} p_{m,\mu} \langle \widetilde{T}^B(\mu) \rangle$ for some $p_{m,\mu} \in \Z[A,A^{-1}]$.

The case of $k = 0$ follows from the fact that $T^B(m) = \widetilde{T}^B(m)$ for any closure that lacks virtual crossings away from $T^B$.  So take any $T^B(m) \in S_k$, where $k \geq 1$, and consider the associated link $\widetilde{T}^B(m)$.  In the neighborhood of any classical crossing in $\widetilde{T}^B(m) - T^B$, the Kauffman-Jones skein relation gives

\begin{center}
\raisebox{9pt}{\scalebox{2}{$\langle$}}
\scalebox{.45}{\begin{tikzpicture}
[scale=1,auto=left,every node/.style={circle,inner sep=0pt}]
\draw[line width=1.4pt,bend left=50] (-.1,-1.1) to (0,0) {};
\draw[line width=1.4pt,bend right=50] (0,-1.2) to (0,0) {};
\draw[line width=1.4pt] (0,0) to (-.5,.5) {};
\draw[line width=1.4pt] (0,0) to (.5,.5) {};
\draw[line width=1.4pt] (-.5,-1.7) to (0,-1.2) {};
\draw[line width=1.4pt] (.5,-1.7) to (.1,-1.3) {};
\node[draw,line width=.8pt,inner sep=4pt] (v) at (0,0) {};
\end{tikzpicture}}
\raisebox{9pt}{\scalebox{2}{$\rangle$} $\ = \ A$ \scalebox{2}{$\langle$}}
\scalebox{.45}{\begin{tikzpicture}
[scale=1,auto=left,every node/.style={circle,inner sep=0pt}]
\draw[line width=1.4pt,bend left=20] (-.5,-1.7) to (0,0) {};
\draw[line width=1.4pt,bend right=20] (.5,-1.7) to (0,0) {};
\draw[line width=1.4pt] (0,0) to (-.5,.5) {};
\draw[line width=1.4pt] (0,0) to (.5,.5) {};
\node[draw,line width=.8pt,inner sep=4pt] (v) at (0,0) {};
\end{tikzpicture}}
\raisebox{9pt}{\scalebox{2}{$\rangle$} $\ + \ A^{-1}$ \scalebox{2}{$\langle$}}
\scalebox{.45}{\begin{tikzpicture}
[scale=1,auto=left,every node/.style={circle,inner sep=0pt}]
\draw[line width=1.4pt,bend left=80] (-.5,-1.7) to (.5,-1.7) {};
\draw[line width=1.4pt,bend right=60] (0,0) to (-.1,-1) {};
\draw[line width=1.4pt,bend left=60] (0,0) to (.1,-1) {};
\draw[line width=1.4pt,bend right=40] (-.1,-1) to (.1,-1) {};
\draw[line width=1.4pt] (0,0) to (-.5,.5) {};
\draw[line width=1.4pt] (0,0) to (.5,.5) {};
\node[draw,line width=.8pt,inner sep=4pt] (v) at (0,0) {};
\end{tikzpicture}}
\raisebox{9pt}{\scalebox{2}{$\rangle$} $\ = \ A$ \scalebox{2}{$\langle$}}
\raisebox{6pt}{\scalebox{.45}{\begin{tikzpicture}
[scale=1,auto=left,every node/.style={circle,inner sep=0pt}]
\draw[line width=1.4pt] (-.5,-.65) to (.5,.65) {};
\draw[line width=1.4pt] (.5,-.65) to (-.5,.65) {};
\node[draw,line width=.8pt,inner sep=4pt] (v) at (0,0) {};
\end{tikzpicture}}}
\raisebox{9pt}{\scalebox{2}{$\rangle$} $\ + \ A^{-1}$ \scalebox{2}{$\langle$}}
\raisebox{6pt}{\scalebox{.45}{\begin{tikzpicture}
[scale=1,auto=left,every node/.style={circle,inner sep=0pt}]
\draw[bend left=80,line width=1.4pt] (-.5,-.65) to (.5,-.65) {};
\draw[bend right=80,line width=1.4pt] (-.5,.65) to (.5,.65) {};
\end{tikzpicture}}}
\raisebox{10pt}{\scalebox{2}{$\rangle$}}
\end{center}

Resolving every classical crossing of $\widetilde{T}^B(m) - T^B$ as above gives $\langle \widetilde{T}^B(m) \rangle = A^k \langle T^B(m) \rangle + \sum_\alpha q_\alpha \langle D_\alpha \rangle$ for $q_\alpha \in \Z[A,A^{-1}]$ and some collection of link diagrams $D_\alpha$, each of which contain $T^B$, lack classical crossings away from $T^B$, and have at most $k-1$ virtual crossings away from $T^B$.   Up to trivial split components and nugatory crossings, each $D_\alpha$ is then equivalent to some closure $T^B(m_\alpha)$ that has at most $k-1$ virtual crossings away from $T^B$.  It follows that $\langle \widetilde{T}^B(m) \rangle = A^k \langle T^B(m) \rangle + \sum_{m \in \mathcal{P}_n} p'_m \kern+1pt \langle T^B(m_\alpha) \rangle$ for some $p'_m \in \Z[A,A^{-1}]$ and some set of closures $T^B(m_\alpha)$ that each contain at most $k-1$ virtual crossings away from $T^B$.  Rearranging gives $\langle T^B(m) \rangle = A^{-k} \langle \widetilde{T}^B(m) \rangle - A^{-k}\sum_{m \in \mathcal{P}_n} p'_m \kern+1pt \langle T^B(m_\alpha) \rangle$ for some set of closures $T^B(m_\alpha)$ that each contain at most $k-1$ virtual crossings away from $T^B$.  Applying the inductive assumption allows us to conclude $\langle T^B(m) \rangle = \sum_{\mu \in \mathcal{P}_n} \widetilde{q}_\mu \langle \widetilde{T}^B(\mu) \rangle$ for some $\widetilde{q_{\mu}} \in \Z[A,A^{-1}]$.
\end{proof}

Pause to note that Lemma \ref{thm: auxiliary polynomial decomposition, lemma 1} is dependent upon the specific algorithm by which we transformed each $T(m)$ into $\widetilde{T}^B(m)$.  That algorithm is certainly only one of many ways to systematically replace all closures with counterparts that are compatible with the given orientation.  It is an open question as to whether the divisors derived in Section \ref{sec: local moves} are identical to those that would result from a different definition of $\widetilde{T}^B(m)$.

We are now ready for the primary theorem of this section, which translates the decomposition of $\langle T \cup T' \rangle$ from Proposition \ref{thm: bracket polynomial decomposition} to a decomposition of the auxiliary polynomial $f(T \cup T')$, no matter the orientation on $T \cup T'$.

\begin{theorem}
\label{thm: auxiliary polynomial decomposition}
Let $L = T \cup T'$ be an oriented virtual link that has been decomposed into the $n$-tangles $T$ and $T'$.  Then

$$f(T \cup T') = \sum_{m \in \mathcal{P}_n} q_m f(\widetilde{T}^B(m))$$

\noindent where the $q_m \in \Z[A,A^{-1}]$ are Laurent polynomials that depend upon the structure of $T'$.
\end{theorem}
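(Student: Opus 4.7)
My plan is to derive the theorem by stringing together Proposition~\ref{thm: bracket polynomial decomposition} and Lemma~\ref{thm: auxiliary polynomial decomposition, lemma 1} to obtain a bracket-level decomposition over the modified closures, and then correct for writhe to convert the identity into one about $f$. Concretely, Proposition~\ref{thm: bracket polynomial decomposition} gives $\langle T\cup T'\rangle = \sum_{m\in\mathcal{P}_n} q_m^{(0)}\langle T(m)\rangle$ for some $q_m^{(0)}\in\Z[A,A^{-1}]$ depending only on $T'$. Applying Lemma~\ref{thm: auxiliary polynomial decomposition, lemma 1} to each $\langle T(m)\rangle$ and collecting like terms yields $\langle T\cup T'\rangle = \sum_{m\in\mathcal{P}_n} c_m\langle \widetilde{T}^B(m)\rangle$, where every $c_m$ is a $\Z[A,A^{-1}]$-linear combination of the $q_\mu^{(0)}$ with coefficients determined by $T$'s orientation $\vec{v}$ and the algorithm producing $\widetilde{T}^B$. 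Since $\vec{v}$ is itself determined by $T'$ (the two tangles share boundary data), every $c_m$ depends only on $T'$.

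The second step is to multiply both sides by $(-A^3)^{-w(L)}$. On the left this gives $f(T\cup T')$ immediately, while on the right each $\langle \widetilde{T}^B(m)\rangle$ must be converted to $f(\widetilde{T}^B(m))$ by inserting the compensating factor $(-A^3)^{w(\widetilde{T}^B(m))-w(L)}$. Absorbing this into the coefficient gives
\[
q_m \;=\; c_m\,(-A^3)^{w(\widetilde{T}^B(m))-w(L)},
\]
and it remains to verify that $q_m\in\Z[A,A^{-1}]$ depends only on $T'$. For this I would argue that the oriented diagrams $T\cup T'$ and $\widetilde{T}^B(m)$ share $T$ verbatim as an oriented subtangle, so the classical crossings interior to $T$ contribute identically to each writhe and cancel in the difference. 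The remaining classical crossings lie either in $T'$ (for $w(L)$) or in $B_{\vec{v}}$ together with the replacement diagrams of Figure~\ref{fig: virtual crossing replacement} (for $w(\widetilde{T}^B(m))$); their signs are determined entirely by $T'$, the matching $m$, and the standard alternating orientation of $T^B$. This makes $w(\widetilde{T}^B(m))-w(L)$ a function of $T'$ and $m$ alone, as required.

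The step I expect to require the most care is verifying that the standard alternating orientation on $T^B$ actually propagates to a coherent orientation on $\widetilde{T}^B(m)$ for every $m\in\mathcal{P}_n$, so that both the writhe $w(\widetilde{T}^B(m))$ and the auxiliary polynomial $f(\widetilde{T}^B(m))$ are unambiguously defined without any reference to the interior of $T$. This is precisely the purpose of the virtual-crossing replacement of Figure~\ref{fig: virtual crossing replacement}: since $T^B$ presents endpoints in the alternating pattern $(+-)^n$, the local replacement routes each pair of nearby strands in a way compatible with this boundary pattern regardless of $m$. Once this orientation-compatibility check is in place, the writhe accounting above is routine, and the theorem follows by identifying the $q_m$ described in the previous paragraph as the desired Laurent polynomials.
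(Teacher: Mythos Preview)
Your proposal is correct and follows essentially the same approach as the paper: combine Proposition~\ref{thm: bracket polynomial decomposition} with Lemma~\ref{thm: auxiliary polynomial decomposition, lemma 1} to obtain a bracket-level expansion over the $\widetilde{T}^B(m)$, verify that each $\widetilde{T}^B(m)$ inherits a coherent orientation from $T$, and then pass to $f$ by observing that the writhe contribution from $T$ cancels across both sides, leaving coefficients that depend only on $T'$ and $m$. The one place where the paper is more explicit than your sketch is the orientation-compatibility check for crossing matchings: rather than appealing directly to Figure~\ref{fig: virtual crossing replacement}, the paper smooths each introduced classical crossing back to parallel strands, notes that the result is (up to trivial components) a closure $T^B(\mu)$ with $\mu\in\mathcal{M}_n$ and hence orientation-compatible, and concludes by observing that the smoothing itself is orientation-preserving.
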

\begin{proof}
Applying Lemma \ref{thm: auxiliary polynomial decomposition, lemma 1} to Proposition \ref{thm: bracket polynomial decomposition}, we immediately know that $\langle T \cup T' \rangle = \sum_{m \in \mathcal{P}_n} p_m \langle \widetilde{T}^B(m) \rangle$ for some $p_m \in \Z[A,A^{-1}]$.  In order to translate this result to $f(T \cup T')$, we need to show that $\widetilde{T}^B(m)$ respects the orientation on $L$ for every $m \in \mathcal{P}_n$.

Begin by observing that, no matter the original orientation on $L$, the modified tangle $T^B$ always has endpoints that alternate between inward and outward strands.  As $T^B$ has standard orientation, it is straightforward to show that a particular closure $T^B(m)$ respects the orientation on $T^B$ if and only if $m$ is non-crossing.  Thus $\widetilde{T}^B(m) = T^B(m)$ respects the given orientation for all $m \in \mathcal{M}_n$.  For matchings $m \in \mathcal{P}_n$ with at least one crossing, consider the virtual link $L_m$ that may be obtained from $\widetilde{T}^B(m)$ by replacing the neighborhood of every virtual crossing in $\widetilde{T}^B(m)$ as shown below.

\begin{center}
\scalebox{.6}{
\begin{tikzpicture}
[scale=1,auto=left,every node/.style={circle,inner sep=0pt}]
\draw[line width=1.4pt,bend left=50] (-.1,-1.1) to (0,0) {};
\draw[line width=1.4pt,bend right=50] (0,-1.2) to (0,0) {};
\draw[line width=1.4pt] (0,0) to (-.5,.5) {};
\draw[line width=1.4pt] (0,0) to (.5,.5) {};
\draw[line width=1.4pt] (-.5,-1.7) to (0,-1.2) {};
\draw[line width=1.4pt] (.5,-1.7) to (.1,-1.3) {};
\node[draw,line width=.8pt,inner sep=4pt] (v) at (0,0) {};
\end{tikzpicture}
\hspace{.15in}
\raisebox{22pt}{
\scalebox{3}{$\Rightarrow$}}
\hspace{.15in}
\begin{tikzpicture}
[scale=1,auto=left,every node/.style={circle,inner sep=0pt}]
\draw[line width=1.4pt,bend right=30] (-.5,-1.7) to (-.5,.5) {};
\draw[line width=1.4pt,bend left=30] (.5,-1.7) to (.5,.5) {};
\end{tikzpicture}}
\end{center}

As each $L_m$ lacks crossings away from $T^B$, up to trivial split components it is equivalent to $T^B(m)$ for some $m \in \mathcal{M}_n$.  It follows that $L_m$ respects the orientation on $T^B$.  As the local move shown above is always orientation-preserving, we may conclude that $\widetilde{T}^B(m)$ respects the given orientation for any $m \in \mathcal{P}_n - \mathcal{M}_n$.

Now assume that our original $n$-tangles have writhes $w(T)=w$ and $w(T')=w'$.  We then have $\langle T \cup T' \rangle = (-A^3)^{w+w'} f(T \cup T')$.  Knowing that every modified closure $\widetilde{T}^B(m)$ is compatible with the orientation on $T \cup T'$, we also have $\langle \widetilde{T}^B(m) \rangle = (-A^3)^{w+\widetilde{w}_m} f(\widetilde{T}^B(m))$ for every $m \in \mathcal{P}_m$, where $\widetilde{w}_m$ is dependent upon the structure of $\widetilde{T}^B(m)-T$.  The theorem follows by substituting these results into $\langle T \cup T' \rangle = \sum_{m \in \mathcal{P}_n} p_m \langle \widetilde{T}^B(m) \rangle$.
\end{proof}

See Figure \ref{fig: n=2 tangle decompositions} for an illustration of the modified closures $\widetilde{T}^B(m)$ from Theorem \ref{thm: auxiliary polynomial decomposition}, when $T$ is a $2$-tangle with either of the orientations from Figure \ref{fig: n=2 and n=3 orientation braids}.

If the original link $L = T \cup T'$ lacks virtual crossings, observe that the closures $T^B(m)$ involving $m \in \mathcal{P}_n - \mathcal{M}_n$ never contribute to the summation of Theorem \ref{thm: auxiliary polynomial decomposition}.  Since we also have $\widetilde{T}^B(m) = T^B(m)$ for every $m \in \mathcal{M}_n$, we draw the following corollary.

\begin{corollary}
\label{thm: auxiliary polynomial decomposition, corollary}
Let $L = T \cup T'$ be an oriented classical link that has been decomposed into the $n$-tangles $T$ and $T'$.  Then

$$f(T \cup T') = \sum_{m \in \mathcal{M}_n} q_m f(T^B(m))$$

\noindent where the $q_m \in \Z[A,A^{-1}]$ are Laurent polynomials that depend upon the structure of $T'$.
\end{corollary}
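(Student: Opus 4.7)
The plan is to apply Theorem \ref{thm: auxiliary polynomial decomposition} and exploit two simplifications available when $L$ is classical. First, for any $m \in \mathcal{M}_n$ the closure $T^B(m)$ is itself classical: the tangle $T$ has no virtual crossings, the orientation braid $B_{\vec{v}}$ contributes only classical crossings, and the noncrossing arcs of $m$ never intersect each other. Hence the replacement procedure of Figure \ref{fig: virtual crossing replacement} is vacuous for such $m$, and $\widetilde{T}^B(m) = T^B(m)$.

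Second, I would revisit the derivation of Theorem \ref{thm: auxiliary polynomial decomposition} and argue that the coefficients $q_m$ vanish for every crossing matching $m \in \mathcal{P}_n - \mathcal{M}_n$. In the Kauffman state expansion underlying Proposition \ref{thm: bracket polynomial decomposition}, the absence of virtual crossings in $T'$ means that each state resolves to a noncrossing tangle $T'_m$ with $m \in \mathcal{M}_n$, so the proposition refines immediately to $\langle T \cup T' \rangle = \sum_{m \in \mathcal{M}_n} p_m \langle T(m) \rangle$. For each such $m$, the closure $T(m)$ is classical, so the second half of Lemma \ref{thm: auxiliary polynomial decomposition, lemma 1} (which eliminates virtual crossings) is vacuous and only the braid-resolution induction is needed. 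I would then verify that each application of the Kauffman skein to a braid generator $\sigma_k$ either preserves the current matching or creates a new matching through a local cup/cap pattern that can be absorbed into the surrounding noncrossing arcs, possibly at the cost of scalar factors of the form $(-A^{-2}-A^2)^{t_1}(-A^3)^{t_2}$, so that the recursion stays within $\mathcal{M}_n$.

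The main obstacle is justifying this last claim: that the skein resolution of a braid generator in a noncrossing configuration yields another noncrossing configuration. Since $\sigma_k$ involves only the adjacent strands $k$ and $k+1$, the relevant smoothing inserts two short horizontal bridges in a neighborhood of these positions, and one must check, by a brief case analysis on whether the points above $k$ and $k+1$ are joined by a single arc of the ambient matching or by arcs running to different endpoints, that the resulting connection pattern is noncrossing up to trivial split components and nugatory crossings. Once this preservation is established, combining the restricted decomposition of $\langle T \cup T' \rangle$ with the writhe conversion at the end of the proof of Theorem \ref{thm: auxiliary polynomial decomposition} (which now applies term by term, since every $T^B(m)$ with $m \in \mathcal{M}_n$ already respects the orientation on $L$) yields the claimed formula $f(T \cup T') = \sum_{m \in \mathcal{M}_n} q_m f(T^B(m))$.
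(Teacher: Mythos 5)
Your proof is correct and follows essentially the same approach as the paper's terse observation that crossing matchings do not contribute for classical links and that $\widetilde{T}^B(m) = T^B(m)$ for every noncrossing $m$. The obstacle you flag in the braid-resolution step is readily dispatched without a case analysis: since $T$, $B_{\vec{v}}$, and any noncrossing $m$ together form a purely classical diagram and the Kauffman skein relation preserves classicality, every intermediate closure $T^{b_k}(\mu)$ that arises lacks virtual crossings, which forces $\mu \in \mathcal{M}_n$ automatically.
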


\begin{figure}[ht!]
\centering
\scalebox{.75}{
\begin{tikzpicture}
[scale=1,auto=left,every node/.style={circle,inner sep=0pt}]
\draw[line width=3pt] (0,0) circle (1cm);
\node (T) at (0:0) {\Huge{\textbf{T}}};
\node (1) at (-90:1) {};
\node (2) at (-180:1) {};
\node (3) at (-270:1) {};
\node (4) at (-360:1) {};
\node (1*) at (-90:.75) {\small{1}};
\node (2*) at (-180:.75) {\small{2}};
\node (3*) at (-270:.75) {\small{3}};
\node (4*) at (-360:.75) {\small{4}};
\draw[line width=1.4pt,->-] (1) circle arc (-20:-250:.82);
\draw[line width=1.4pt,->-] (3) circle arc (-200:-430:.82);
\end{tikzpicture}
\hspace{.75in}
\begin{tikzpicture}
[scale=1,auto=left,every node/.style={circle,inner sep=0pt}]
\draw[line width=3pt] (0,0) circle (1cm);
\node (T) at (0:0) {\Huge{\textbf{T}}};
\node (1) at (-90:1) {};
\node (2) at (-180:1) {};
\node (3) at (-270:1) {};
\node (4) at (-360:1) {};
\node (1*) at (-90:.75) {\small{1}};
\node (2*) at (-180:.75) {\small{2}};
\node (3*) at (-270:.75) {\small{3}};
\node (4*) at (-360:.75) {\small{4}};
\draw[line width=1.4pt,->-] (1) circle arc (-160:70:.82);
\draw[line width=1.4pt,->-] (3) circle arc (-340:-110:.82);
\end{tikzpicture}
\hspace{.75in}
\raisebox{5pt}{
\begin{tikzpicture}
[scale=1,auto=left,every node/.style={circle,inner sep=0pt}]
\draw[line width=3pt] (0,0) circle (1cm);
\node (T) at (0:0) {\Huge{\textbf{T}}};
\node (1) at (-90:1) {};
\node (2) at (-180:1) {};
\node (3) at (-270:1) {};
\node (4) at (-360:1) {};
\node (1*) at (-90:.75) {\small{1}};
\node (2*) at (-180:.75) {\small{2}};
\node (3*) at (-270:.75) {\small{3}};
\node (4*) at (-360:.75) {\small{4}};
\draw[line width=1.4pt,->-] (1) arc (-50:-248:1.51);
\draw[line width=1.4pt,-<-] (4) arc (-400:-202:1.51);
\draw[line width=1.4pt, bend left=75] (-225:2.18) to (-225:1.5);
\draw[line width=1.4pt, bend right=75] (-225:2.18) to (-222:1.55);
\draw[line width=1.4pt, bend right=25] (-225:1.5) to (-180:1);
\draw[line width=1.4pt, bend left=25] (-230:1.48) to (-270:1);
\node[draw,line width=.8pt,inner sep=4pt] (v) at (-225:2.18) {};
\end{tikzpicture}}}

\vspace{.3in}

\scalebox{.75}{
\begin{tikzpicture}
[scale=1,auto=left,every node/.style={circle,inner sep=0pt}]
\draw[line width=3pt] (0,0) circle (1cm);
\node (T) at (0:0) {\Huge{\textbf{T}}};
\node (1) at (-90:1) {};
\node (2) at (-180:1) {};
\node (3) at (-270:1) {};
\node (4) at (-360:1) {};
\node (1*) at (-90:.75) {\small{1}};
\node (2*) at (-180:.75) {\small{2}};
\node (3*) at (-270:.75) {\small{3}};
\node (4*) at (-360:.75) {\small{4}};
\draw[line width=1.4pt,-<-] (4) arc (-45:165:1.23);
\draw[line width=1.4pt,-<-] (1) arc (-45:-260:1.23);
\draw[line width=1.4pt, bend right=15] (-220:1.48) to (-180:1);
\draw[line width=1.4pt, bend left=15] (-225:1.536) to (-270:1);
\end{tikzpicture}
\hspace{.75in}
\begin{tikzpicture}
[scale=1,auto=left,every node/.style={circle,inner sep=0pt}]
\draw[line width=3pt] (0,0) circle (1cm);
\node (T) at (0:0) {\Huge{\textbf{T}}};
\node (1) at (-90:1) {};
\node (2) at (-180:1) {};
\node (3) at (-270:1) {};
\node (4) at (-360:1) {};
\node (1*) at (-90:.75) {\small{1}};
\node (2*) at (-180:.75) {\small{2}};
\node (3*) at (-270:.75) {\small{3}};
\node (4*) at (-360:.75) {\small{4}};
\draw[line width=1.4pt,-<-] (-225:1.536) arc (-60:-401:.5);
\draw[line width=1.4pt, bend right=35] (-223:1.48) to (-180:1);
\draw[line width=1.4pt, bend left=15] (-225:1.536) to (-270:1);
\draw[line width=1.4pt,->-] (1) circle arc (-160:70:.82);
\end{tikzpicture}
\hspace{.03in}
\raisebox{42pt}{\scalebox{2}{$=$}}
\hspace{.03in}
\begin{tikzpicture}
[scale=1,auto=left,every node/.style={circle,inner sep=0pt}]
\draw[line width=3pt] (0,0) circle (1cm);
\node (T) at (0:0) {\Huge{\textbf{T}}};
\node (1) at (-90:1) {};
\node (2) at (-180:1) {};
\node (3) at (-270:1) {};
\node (4) at (-360:1) {};
\node (1*) at (-90:.75) {\small{1}};
\node (2*) at (-180:.75) {\small{2}};
\node (3*) at (-270:.75) {\small{3}};
\node (4*) at (-360:.75) {\small{4}};
\draw[line width=1.4pt,->-] (1) circle arc (-160:70:.82);
\draw[line width=1.4pt,-<-] (3) circle arc (-340:-110:.82);
\end{tikzpicture}}

\scalebox{.75}{
\begin{tikzpicture}
[scale=1,auto=left,every node/.style={circle,inner sep=0pt}]
\draw[line width=3pt] (0,0) circle (1cm);
\node (T) at (0:0) {\Huge{\textbf{T}}};
\node (1) at (-90:1) {};
\node (2) at (-180:1) {};
\node (3) at (-270:1) {};
\node (4) at (-360:1) {};
\node (1*) at (-90:.75) {\small{1}};
\node (2*) at (-180:.75) {\small{2}};
\node (3*) at (-270:.75) {\small{3}};
\node (4*) at (-360:.75) {\small{4}};
\draw[line width=1.4pt,->-] (1) arc (-50:-243:1.85);
\draw[line width=1.4pt,-<-] (4) arc (-400:-207:1.85);
\draw[line width=1.4pt, bend left=50] (-225:2.88) to (-225:2.1);
\draw[line width=1.4pt, bend right=50] (-225:2.88) to (-223:2.15);
\draw[line width=1.4pt, bend right=50] (-225:2.1) to (-225:1.5);
\draw[line width=1.4pt, bend left=27] (-225:1.5) to (-270:1);
\draw[line width=1.4pt, bend right=30] (-222:1.48) to (-180:1);
\draw[line width=1.4pt, bend left=50] (-226:2.03) to (-226:1.57);
\node[draw,line width=.8pt,inner sep=4pt] (v) at (-225:2.88) {};
\end{tikzpicture}
\hspace{.03in}
\raisebox{42pt}{\scalebox{2}{$=$}}
\hspace{.03in}
\begin{tikzpicture}
[scale=1,auto=left,every node/.style={circle,inner sep=0pt}]
\draw[line width=3pt] (0,0) circle (1cm);
\node (T) at (0:0) {\Huge{\textbf{T}}};
\node (1) at (-90:1) {};
\node (2) at (-180:1) {};
\node (3) at (-270:1) {};
\node (4) at (-360:1) {};
\node (1*) at (-90:.75) {\small{1}};
\node (2*) at (-180:.75) {\small{2}};
\node (3*) at (-270:.75) {\small{3}};
\node (4*) at (-360:.75) {\small{4}};
\draw[line width=1.4pt,->-] (1) arc (-50:-310:1.31);
\draw[line width=1.4pt,->-] (2) arc (-140:-400:1.31);
\node[draw,line width=.8pt,inner sep=4pt] (v) at (-225:1.74) {};
\end{tikzpicture}}

\caption{The three modified closures $\widetilde{T}^B(m)$ for a 2-tangle with orientation $\vec{v}=+-+-$ (row one) and a 2-tangle with orientation $\vec{v}=++--$ (rows two and three).}
\label{fig: n=2 tangle decompositions}
\end{figure}
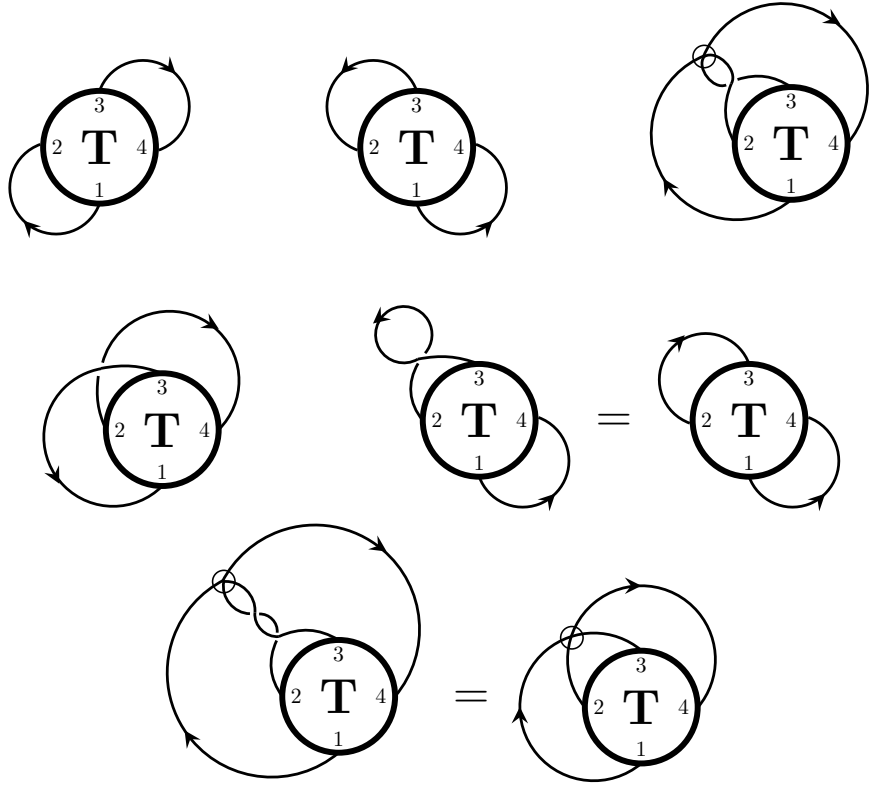

\section{Local Moves \& Divisibility of the Jones Polynomial}
\label{sec: local moves}

As our primary application of Theorem \ref{thm: auxiliary polynomial decomposition}, we investigate how various local moves effect the auxiliary polynomial of a virtual link.  So let $T_1$ be an $n$-tangle with orientation $\vec{v}$, and let $\phi$ be a local move that replaces $T_1$ with another $n$-tangle $\phi(T_1) = T_2$ of orientation $\vec{v}$.  Then consider the virtual links $L = T_1 \cup T'$ and $\phi(L) = T_2 \cup T'$, where $T'$ is an arbitrary $n$-tangle that has been oriented so as to be compatible with $T_1$.  Applying Theorem \ref{thm: auxiliary polynomial decomposition} to both of these links immediately yields the following:

\begin{proposition}
\label{thm: general auxiliary polynomial divisibility}
Let $\phi$ be a local move that replaces the oriented $n$-tangle $T_1$ with an $n$-tangle $T_2$ whose endpoints are equivalently oriented.  For any $n$-tangle $T'$, the polynomial $f(T_2 \cup T') - f(T_1 \cup T')$ is divisible by the gcd of the set $\lbrace f(\widetilde{T}_2^B(m))- f(\widetilde{T}_1^B(m)) \rbrace_{m \in \mathcal{P}_n}$.
\end{proposition}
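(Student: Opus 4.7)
The plan is to apply Theorem \ref{thm: auxiliary polynomial decomposition} to each of the two links $T_1 \cup T'$ and $T_2 \cup T'$ separately, and then exploit the crucial fact that the coefficient polynomials $q_m \in \Z[A,A^{-1}]$ appearing in the decomposition depend only on the structure of $T'$. Since $T'$ is unchanged between the two links, the same coefficients $q_m$ will appear in both decompositions, making a clean subtraction possible.

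More concretely, first I would verify that both $T_1 \cup T'$ and $T_2 \cup T'$ admit a well-defined common orientation: this is precisely the hypothesis that the endpoints of $T_2$ are oriented identically to those of $T_1$, which guarantees that the orientation on $T'$ inherited from $L = T_1 \cup T'$ is also compatible with $T_2$. Consequently $\phi(L) = T_2 \cup T'$ is an oriented virtual link with the same orientation $\vec{v}$ on its $n$-tangle piece, and Theorem \ref{thm: auxiliary polynomial decomposition} applies to both links using the same orientation braid $B_{\vec{v}}$.

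Next I would write
\begin{align*}
f(T_1 \cup T') &= \sum_{m \in \mathcal{P}_n} q_m\, f(\widetilde{T}_1^B(m)), \\
f(T_2 \cup T') &= \sum_{m \in \mathcal{P}_n} q_m\, f(\widetilde{T}_2^B(m)),
\end{align*}
with the \emph{same} $q_m$ in both sums, and subtract to obtain
$$f(T_2 \cup T') - f(T_1 \cup T') = \sum_{m \in \mathcal{P}_n} q_m\bigl(f(\widetilde{T}_2^B(m)) - f(\widetilde{T}_1^B(m))\bigr).$$
Each summand is divisible by $\gcd \{f(\widetilde{T}_2^B(m)) - f(\widetilde{T}_1^B(m))\}_{m \in \mathcal{P}_n}$ inside the Laurent polynomial ring $\Z[A,A^{-1}]$, so the entire right-hand side is as well.

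The main obstacle, and really the only non-routine point, is justifying that the $q_m$ are genuinely the same for both links rather than merely being of the same form. Theorem \ref{thm: auxiliary polynomial decomposition} asserts that the $q_m$ depend on the structure of $T'$, but one should trace back through the proof of that theorem (and of Proposition \ref{thm: bracket polynomial decomposition} and Lemma \ref{thm: auxiliary polynomial decomposition, lemma 1}) to confirm that the recipe producing $q_m$ uses only $T'$, the orientation word $\vec{v}$, and the orientation braid $B_{\vec{v}}$, all of which are identical for $L$ and $\phi(L)$. Once this is settled the divisibility conclusion is immediate.
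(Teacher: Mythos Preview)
Your proposal is correct and matches the paper's approach exactly: the paper simply states that applying Theorem \ref{thm: auxiliary polynomial decomposition} to both links ``immediately yields'' the proposition, and your write-up is a careful unpacking of that one-line argument. Your attention to why the $q_m$ coincide for both links (namely, because $T'$, $\vec{v}$, and $B_{\vec{v}}$ are all unchanged) is more explicit than the paper's treatment but entirely in the same spirit.
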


Even for small $n$, Proposition \ref{thm: general auxiliary polynomial divisibility} is of limited usage unless one may succinctly characterize the closures $\widetilde{T}_1^B(m)$, $\widetilde{T}_2^B(m)$ for every $m \in \mathcal{P}_n$.  In what follows, we consider various classes of local moves where this characterization is tractable, restricting from virtual links to classical links as needed.  These classes will encompass, as special cases, oriented versions for many of the local moves considered by Ganzell \cite{Ganzell}.

Pause to observe that, if two links $L_1,L_2$ are related via a finite sequence of $\phi$-moves, repeated application of Proposition \ref{thm: general auxiliary polynomial divisibility} says that $f(L_1) - f(L_2)$ must be divisible by the greatest common divisor of the $\lbrace f(\widetilde{T}_2^B(m))- f(\widetilde{T}_1^B(m)) \rbrace_{m \in \mathcal{P}_n}$.  In cases where the $\widetilde{T}_1^B(m)$, $\widetilde{T}_2^B(m)$ are easily computable for all $m \in \mathcal{P}_n$, this provides a necessary condition for determining whether $\phi$ represents an unlinking operation:

\begin{corollary}
\label{thm: unlinking moves from auxiliary polynomial divisibility}
Let $\phi$ be a local move that replaces the oriented $n$-tangle $T_1$ with an $n$-tangle $T_2$ whose endpoints are equivalently oriented, and let $\bigcirc^k$ be the unlink of $k$ components. Then the virtual link $L$ may be transformed into $\bigcirc^k$ via a finite sequence of $\phi$-moves only if $f(L) - f(\bigcirc^k) = f(L) - (-A^{-2} - A^2)^{k-1}$ is divisible by the gcd of the $\lbrace f(\widetilde{T}_2^B(m))- f(\widetilde{T}_1^B(m)) \rbrace_{m \in \mathcal{P}_n}$.
\end{corollary}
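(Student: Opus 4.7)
The plan is to chain Proposition \ref{thm: general auxiliary polynomial divisibility} along an arbitrary sequence of $\phi$-moves and then evaluate the auxiliary polynomial of the target unlink explicitly. Assume $L = L_0, L_1, \ldots, L_N = \bigcirc^k$ is a sequence of virtual links in which each consecutive pair $L_i, L_{i+1}$ differs by a single $\phi$-move. For each $i$, the hypothesis of the corollary lets us write $L_i = T_1 \cup T'_i$ and $L_{i+1} = T_2 \cup T'_i$ for some complementary $n$-tangle $T'_i$ oriented compatibly with the shared endpoint orientation $\vec{v}$ of $T_1$ and $T_2$. Applying Proposition \ref{thm: general auxiliary polynomial divisibility} at each step yields that $f(L_{i+1}) - f(L_i)$ is divisible by $d(A) := \gcd\{f(\widetilde{T}_2^B(m)) - f(\widetilde{T}_1^B(m))\}_{m \in \mathcal{P}_n}$.

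The key observation that makes this chaining work is that the modified closures $\widetilde{T}_j^B(m)$ depend only on the fixed tangle $T_j$ and the matching $m$, not on the surrounding tangle $T'_i$. Consequently the divisor $d(A)$ is one and the same at every step of the sequence, and a telescoping identity gives
\[
f(L) - f(\bigcirc^k) \;=\; \sum_{i=0}^{N-1} \bigl( f(L_i) - f(L_{i+1}) \bigr),
\]
where each summand on the right is divisible by $d(A)$. Hence so is the left-hand side.

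It remains only to identify $f(\bigcirc^k)$ as a Laurent polynomial. Iterating the defining relation $\langle \bigcirc \cup D \rangle = (-A^2 - A^{-2}) \langle D \rangle$ together with $\langle \bigcirc \rangle = 1$ yields $\langle \bigcirc^k \rangle = (-A^2 - A^{-2})^{k-1}$. Since $\bigcirc^k$ admits a diagram with writhe zero, the formula $f(L) = (-A^3)^{-w(L)} \langle L \rangle$ reduces to $f(\bigcirc^k) = (-A^{-2} - A^2)^{k-1}$, matching the stated form. The argument is entirely formal once Proposition \ref{thm: general auxiliary polynomial divisibility} is in hand; there is no genuine obstacle, and the only point requiring explicit care is verifying that $d(A)$ does not drift between steps of the move sequence, which follows immediately from the construction of $\widetilde{T}_j^B(m)$.
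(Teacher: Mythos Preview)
Your proof is correct and follows exactly the approach the paper intends: the paper states the corollary without a separate proof, relying on the observation (made just before the corollary) that repeated application of Proposition~\ref{thm: general auxiliary polynomial divisibility} along a finite sequence of $\phi$-moves gives divisibility of $f(L_1)-f(L_2)$ by the common gcd, together with the standard evaluation $f(\bigcirc^k)=(-A^{-2}-A^2)^{k-1}$. Your telescoping argument and explicit computation of $f(\bigcirc^k)$ simply spell out these details.
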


\subsection{Rotational Local Moves of Classical $n$-tangles}
\label{subsec: rotational local moves}

In this subsection we restrict our attention to local moves $T_1 \mapsto T_2$ that involve rotation of $T_1$ by some fixed angle.  So let $T$ be an $n$-tangle, and let $r^k(T)$ be the $n$-tangle that results from rotating $T$ by $\frac{k\pi}{n}$ radians in the clockwise direction.  For every $T$ and every $k > 0$, this defines a local move $T \mapsto r^k(T)$ that may or may not preserve orientation on endpoints.

To ensure that our local moves preserve orientation, we restrict our attention to $n$-tangles with standard orientation $\vec{v}=(+-)^n$ and modify the rotational operator as follows.  If $k$ is even, $r^k(T)$ already has standard orientation and we define $\rho^k(T)=r^k(T)$.  If $k$ is odd, $r^k(T)$ has the opposite orientation $\vec{v}' = (-+)^n$.  In this case we define $\rho^k(T)$ to be the $n$-tangle of orientation $\vec{v}$ that results from reversing every strand in $r^k(T)$, including closed strands that do not terminate at the boundary.  Observe that $f(r^k(T)) = f(\rho^k(T))$ for any $T$, as reversing every strand in a tangle fixes the writhe of every crossing.

Now take $m \in \mathcal{P}_n$.  For every $k \geq 0$ we similarly define $r^k(m)$ to be the matching that results from a counterclockwise rotation of $m$ by $\frac{k\pi}{n}$ radians.  Clearly $r^k(m)$ is noncrossing for all $k>0$ if and only if $m$ is noncrossing.  For all $m \in \mathcal{P}_n$, notice that the closure $T(r^k(m))$ may be obtained from $T(m)$ via a counterclockwise rotation of $T(m)-T$ by $\frac{k\pi}{n}$ radians.

If $T$ has standard orientation, we immediately have $\rho^k(T)(m) = T(r^k(m))$ for all $m \in \mathcal{P}_n$ and all $k > 0$.  However, for matchings with crossings this equality breaks down when we replace the neighborhood of virtual crossings as in Figure \ref{fig: virtual crossing replacement}.  If we further assume that $m$ is noncrossing, we always have $\widetilde{T}^B(m) = T^B(m) =T(m)$ and may still assert $\widetilde{\rho^k(T)}\kern-0pt ^B(m) = \widetilde{T}^B(r^k(m))$ for every $k > 0$.  All of this gives the following specialization of Proposition \ref{thm: general auxiliary polynomial divisibility}.

\begin{proposition}
\label{thm: divisibility under rotational move}
Let $T$ be a classical $n$-tangle with standard orientation.  For any classical $n$-tangle $T'$ and every $k > 0$, the polynomial $f(\rho^k(T) \cup T') - f(T \cup T')$  is divisible by the gcd of the set $\lbrace f(T(r^k(m))) - f(T(m)) \rbrace_{m \in \mathcal{M}_n}$.
\end{proposition}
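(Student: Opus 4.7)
The plan is to derive the proposition as a direct specialization of Proposition \ref{thm: general auxiliary polynomial divisibility}, refined to the classical setting via Corollary \ref{thm: auxiliary polynomial decomposition, corollary}, and then substitute the closure identifications that the paragraphs immediately preceding the statement have already established.

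First I would apply Corollary \ref{thm: auxiliary polynomial decomposition, corollary} to each of the classical links $T \cup T'$ and $\rho^k(T) \cup T'$. Both decompositions range over $\mathcal{M}_n$ rather than the full $\mathcal{P}_n$, and because the Laurent-polynomial coefficients $q_m \in \Z[A,A^{-1}]$ depend only on the structure of $T'$ (and not on the internal structure of the tangle being closed), the same $q_m$ appear in both expansions. Subtracting term by term gives
$$f(\rho^k(T) \cup T') - f(T \cup T') = \sum_{m \in \mathcal{M}_n} q_m \bigl(f(\rho^k(T)^B(m)) - f(T^B(m))\bigr),$$
from which divisibility by the gcd of $\{f(\rho^k(T)^B(m)) - f(T^B(m))\}_{m \in \mathcal{M}_n}$ is immediate.

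Next I would replace the modified closures by the unadorned closures $T(m)$ and $T(r^k(m))$. Since the standard orientation $\vec{v} = (+-)^n$ yields the trivial orientation braid $B_{\vec{v}} = \mathrm{id}$, we have $T^B = T$ and $\rho^k(T)^B = \rho^k(T)$, so $T^B(m) = T(m)$ for every noncrossing $m$. The paragraphs preceding the statement already record that $\widetilde{\rho^k(T)}^B(m) = \widetilde{T}^B(r^k(m))$ for noncrossing $m$, and since $r^k$ preserves noncrossingness both tildes may be dropped to give $\rho^k(T)^B(m) = T(r^k(m))$. Substituting these identifications yields the claimed divisibility.

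The main obstacle is purely notational bookkeeping: the strand-reversal step built into the definition of $\rho^k$ (for odd $k$) means $\rho^k(T)(m)$ and $T(r^k(m))$ may not be literally identical diagrams, so I must invoke the fact that reversing every strand in a tangle preserves the writhe of every crossing and hence the auxiliary polynomial $f$, precisely as already remarked in the setup where $f(r^k(T)) = f(\rho^k(T))$. Once this observation is in hand, no genuinely new calculation is needed beyond what Corollary \ref{thm: auxiliary polynomial decomposition, corollary} and the preparatory identifications supply.
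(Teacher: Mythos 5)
Your proposal is correct and reproduces the paper's intended derivation, which it leaves implicit behind the remark "All of this gives the following specialization of Proposition~\ref{thm: general auxiliary polynomial divisibility}." You correctly identify that the standard orientation makes $B_{\vec{v}}$ trivial (so $T^B = T$ and $\widetilde{T}^B(m) = T(m)$ for noncrossing $m$), that the corollary restricts the sum to $\mathcal{M}_n$ because $T'$ is classical, and that the identification $f(\rho^k(T)(m)) = f(T(r^k(m)))$ uses invariance of $f$ under reversal of all strands to absorb the reorientation built into $\rho^k$ for odd $k$.
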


Looking to apply Proposition \ref{thm: divisibility under rotational move}, we first consider rotational local moves $T \mapsto \rho^k(T)$ where $T$ is a classical $2$-tangle.  Here we denote the two non-crossing matchings on $4$ points by $m_1 = ((1,2),(3,4))$ and $m_2 = ((1,4),(2,3))$.  When $T$ is a classical $2$-tangle with standard orientation, the local move $T \mapsto \rho^2(T)$ corresponds to the traditional notation of mutation.  This means that following proposition is standard orientation version of the classic result stating that the auxiliary polynomial is invariant under mutation.

\begin{proposition}
\label{thm: 2-tangles, mutation}
Let $T$ be a classical $2$-tangle with standard orientation.  For any $2$-tangle $T'$ with compatible orientation, $f(\rho^2(T) \cup T') = f(T \cup T')$.
\end{proposition}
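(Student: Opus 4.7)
The plan is to derive this as an immediate specialization of Proposition \ref{thm: divisibility under rotational move} (or, equivalently, directly from Corollary \ref{thm: auxiliary polynomial decomposition, corollary}) by observing that in the $n=2$, $k=2$ case, the rotation $r^2$ fixes every non-crossing matching on four points. Concretely, I would begin by writing down both pieces of $\mathcal{M}_2$, namely $m_1 = ((1,2),(3,4))$ and $m_2 = ((1,4),(2,3))$, and checking the effect of a counterclockwise rotation by $\pi = \tfrac{2\pi}{2}$ radians. This rotation sends the endpoint labeled $i$ to the position of the endpoint labeled $i+2 \pmod 4$, so the arc $(1,2)$ maps to $(3,4)$ and the arc $(3,4)$ maps to $(1,2)$, leaving $m_1$ setwise unchanged. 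The same check applied to $m_2$ shows $(1,4) \leftrightarrow (2,3)$, so $r^2(m_2) = m_2$ as well.

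Next I would apply Corollary \ref{thm: auxiliary polynomial decomposition, corollary} to the two classical links $L = T \cup T'$ and $\rho^2(L) = \rho^2(T) \cup T'$. Since both $T$ and $\rho^2(T)$ have standard orientation, the associated orientation braid is trivial and $T^B(m) = T(m)$ (and likewise for $\rho^2(T)$) for every $m \in \mathcal{M}_2$. Because the tangle $T'$ is the same in both diagrams, the corollary produces identical coefficients $q_m \in \mathbb{Z}[A,A^{-1}]$ in the two expansions:
\begin{align*}
f(T \cup T') &= q_{m_1}\,f(T(m_1)) + q_{m_2}\,f(T(m_2)), \\
f(\rho^2(T) \cup T') &= q_{m_1}\,f(\rho^2(T)(m_1)) + q_{m_2}\,f(\rho^2(T)(m_2)).
\end{align*}
Invoking the identity $\rho^k(T)(m) = T(r^k(m))$ recorded in Subsection \ref{subsec: rotational local moves}, together with the fact established in the previous paragraph that $r^2(m_i) = m_i$ for $i=1,2$, the two right-hand sides agree term by term, giving the desired equality.

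The only place any care is needed is in justifying $\rho^2(T)(m) = T(r^2(m))$ for $m \in \mathcal{M}_2$, since the formula for general $m \in \mathcal{P}_n$ may fail once one passes to the modified closures $\widetilde{T}^B$. However, restricting to non-crossing $m$ sidesteps this, as noted in the paragraph preceding Proposition \ref{thm: divisibility under rotational move}, and no modification of virtual crossings is needed. I therefore do not anticipate any genuine obstacle; the proof is essentially a two-line calculation once the rotational symmetry of $\mathcal{M}_2$ is observed. Alternatively, one can phrase the argument as a direct appeal to Proposition \ref{thm: divisibility under rotational move}: both generators of the set $\lbrace f(T(r^2(m))) - f(T(m)) \rbrace_{m \in \mathcal{M}_2}$ are identically zero, so the divisibility conclusion forces $f(\rho^2(T)\cup T') - f(T \cup T') = 0$.
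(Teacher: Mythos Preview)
Your proposal is correct and takes essentially the same approach as the paper: both arguments hinge on the observation that $r^2$ fixes each element of $\mathcal{M}_2$, and then invoke Proposition~\ref{thm: divisibility under rotational move} (your ``alternative'' phrasing at the end is verbatim the paper's proof). Your more explicit version via Corollary~\ref{thm: auxiliary polynomial decomposition, corollary} simply unpacks what that proposition is doing in this case.
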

\begin{proof}
Observe that $r^2(m_1) = m_1$ and $r^2(m_2) = m_2$, giving $f(T(r^2(m_1))) - f(T(m_1)) = 0$ and $f(T(r^2(m_2))) - f(T(m_2)) = 0$.  Proposition \ref{thm: divisibility under rotational move} then implies that $f(\rho^2(T) \cup T') - f(T \cup T')$ is divisible by $0$ for any $T'$.
\end{proof}

Our next local move represents a ``semi-mutation" on the associated diagram.  For a demonstration of how the closures of a $2$-tangle behave under this move, see Figure \ref{fig: semi-mutation example}.

\begin{theorem}
\label{thm: 2-tangles, semi-mutation}
Let $T$ be a classical $2$-tangle with standard orientation.  For any $2$-tangle $T'$ with compatible orientation, $f(\rho^1(T) \cup T') - f(T \cup T')$ is divisible by $f(T(m_1)) - f(T(m_2))$.
\end{theorem}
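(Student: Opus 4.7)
The plan is to apply Proposition \ref{thm: divisibility under rotational move} directly with $n=2$ and $k=1$. Since $\mathcal{M}_2 = \{m_1, m_2\}$ has only two elements, the gcd in Proposition \ref{thm: divisibility under rotational move} reduces to the gcd of just two Laurent polynomials, and everything hinges on identifying the matchings $r^1(m_1)$ and $r^1(m_2)$.

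First I would unpack the rotation $r^1$ on $\mathcal{P}_2$. Since $n=2$, the operator $r^1$ rotates a matching by $\pi/2$ radians counterclockwise. Under the convention that point $i$ sits at radial coordinate $\theta = -\pi i/2$, this rotation sends the arc $(1,2)$ (joining $-\pi/2$ and $-\pi$) to the arc $(4,1)$ (joining $0$ and $-\pi/2$), and sends the arc $(3,4)$ to the arc $(2,3)$. Hence $r^1(m_1) = m_2$, and symmetrically $r^1(m_2) = m_1$. This is the one geometric observation that powers the argument.

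With that swap in hand, the set $\lbrace f(T(r^1(m))) - f(T(m)) \rbrace_{m \in \mathcal{M}_2}$ reduces to
\[
\bigl\{ f(T(m_2)) - f(T(m_1)),\ f(T(m_1)) - f(T(m_2)) \bigr\},
\]
whose two elements are negatives of one another. Their gcd is therefore $f(T(m_1)) - f(T(m_2))$ up to sign, and Proposition \ref{thm: divisibility under rotational move} immediately gives the claimed divisibility $\bigl(f(T(m_1)) - f(T(m_2))\bigr) \mid \bigl(f(\rho^1(T) \cup T') - f(T \cup T')\bigr)$.

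There is no real obstacle here; the only thing to be careful about is the orientation/convention check that $r^1$ really is an involution on $\mathcal{M}_2$ that interchanges $m_1$ and $m_2$, and that the standard-orientation hypothesis lets us invoke Proposition \ref{thm: divisibility under rotational move} in the first place. Both of these are bookkeeping, so the proof will be a short corollary of Proposition \ref{thm: divisibility under rotational move} rather than a substantial argument.
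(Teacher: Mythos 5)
Your proof is correct and follows the same line of argument as the paper: identify that the $\pi/2$-rotation $r^1$ swaps $m_1$ and $m_2$ in $\mathcal{M}_2$, observe that the resulting two differences are negatives of each other, and invoke Proposition \ref{thm: divisibility under rotational move}. The geometric bookkeeping for $r^1(m_1) = m_2$ matches the paper's conventions, so nothing further is needed.
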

\begin{proof}
Here we have $r^1(m_1) = m_2$ and $r^1(m_2) = m_1$.  Thus $f(T(r^1(m_1))) - f(T(m_1)) = f(T(m_2)) - f(T(m_1))$ and $f(T(r^1(m_2))) - f(T(m_2)) = f(T(m_1)) - f(T(m_2))$, from which Proposition \ref{thm: divisibility under rotational move} gives the desired result.
\end{proof}

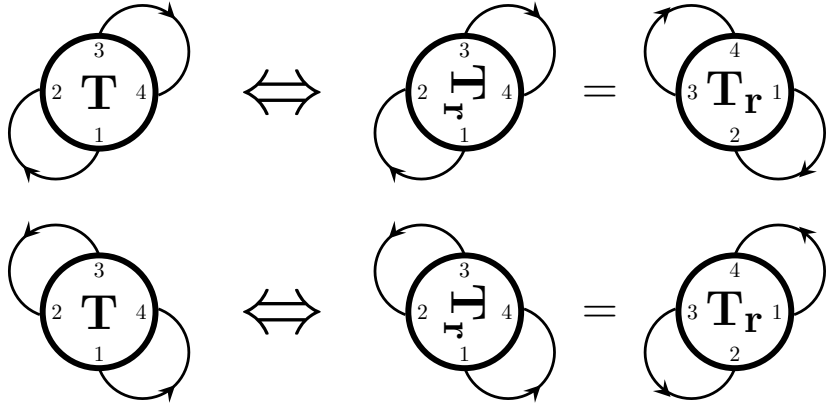
\begin{figure}[ht!]
\centering

\scalebox{.75}{
\begin{tikzpicture}
[scale=1,auto=left,every node/.style={circle,inner sep=0pt}]
\draw[line width=3pt] (0,0) circle (1cm);
\node (T) at (0:0) {\Huge{\textbf{T}}};
\node (1) at (-90:1) {};
\node (2) at (-180:1) {};
\node (3) at (-270:1) {};
\node (4) at (-360:1) {};
\node (1*) at (-90:.75) {\small{1}};
\node (2*) at (-180:.75) {\small{2}};
\node (3*) at (-270:.75) {\small{3}};
\node (4*) at (-360:.75) {\small{4}};
\draw[line width=1.4pt,->-] (1) circle arc (-20:-250:.82);
\draw[line width=1.4pt,->-] (3) circle arc (-200:-430:.82);
\end{tikzpicture}
\hspace{.2in}
\raisebox{37pt}{\scalebox{3.5}{$\Leftrightarrow$}}
\hspace{.2in}
\begin{tikzpicture}
[scale=1,auto=left,every node/.style={circle,inner sep=0pt}]
\draw[line width=3pt] (0,0) circle (1cm);
\node[rotate=-90] (T) at (0:0) {\Huge{\textbf{T\textsubscript{r}}}};
\node (1) at (-90:1) {};
\node (2) at (-180:1) {};
\node (3) at (-270:1) {};
\node (4) at (-360:1) {};
\node (1*) at (-90:.75) {\small{1}};
\node (2*) at (-180:.75) {\small{2}};
\node (3*) at (-270:.75) {\small{3}};
\node (4*) at (-360:.75) {\small{4}};
\draw[line width=1.4pt,->-] (1) circle arc (-20:-250:.82);
\draw[line width=1.4pt,->-] (3) circle arc (-200:-430:.82);
\end{tikzpicture}
\hspace{.03in}
\raisebox{42pt}{\scalebox{2}{$=$}}
\hspace{.03in}
\begin{tikzpicture}
[scale=1,auto=left,every node/.style={circle,inner sep=0pt}]
\draw[line width=3pt] (0,0) circle (1cm);
\node (T) at (0:0) {\Huge{\textbf{T\textsubscript{r}}}};
\node (1) at (-90:1) {};
\node (2) at (-180:1) {};
\node (3) at (-270:1) {};
\node (4) at (-360:1) {};
\node (1*) at (-90:.75) {\small{2}};
\node (2*) at (-180:.75) {\small{3}};
\node (3*) at (-270:.75) {\small{4}};
\node (4*) at (-360:.75) {\small{1}};
\draw[line width=1.4pt,-<-] (1) circle arc (-160:70:.82);
\draw[line width=1.4pt,-<-] (3) circle arc (-340:-110:.82);
\end{tikzpicture}}

\vspace{.15in}

\scalebox{.75}{
\begin{tikzpicture}
[scale=1,auto=left,every node/.style={circle,inner sep=0pt}]
\draw[line width=3pt] (0,0) circle (1cm);
\node (T) at (0:0) {\Huge{\textbf{T}}};
\node (1) at (-90:1) {};
\node (2) at (-180:1) {};
\node (3) at (-270:1) {};
\node (4) at (-360:1) {};
\node (1*) at (-90:.75) {\small{1}};
\node (2*) at (-180:.75) {\small{2}};
\node (3*) at (-270:.75) {\small{3}};
\node (4*) at (-360:.75) {\small{4}};
\draw[line width=1.4pt,->-] (1) circle arc (-160:70:.82);
\draw[line width=1.4pt,->-] (3) circle arc (-340:-110:.82);
\end{tikzpicture}
\hspace{.2in}
\raisebox{37pt}{\scalebox{3.5}{$\Leftrightarrow$}}
\hspace{.2in}
\begin{tikzpicture}
[scale=1,auto=left,every node/.style={circle,inner sep=0pt}]
\draw[line width=3pt] (0,0) circle (1cm);
\node[rotate=-90] (T) at (0:0) {\Huge{\textbf{T\textsubscript{r}}}};
\node (1) at (-90:1) {};
\node (2) at (-180:1) {};
\node (3) at (-270:1) {};
\node (4) at (-360:1) {};
\node (1*) at (-90:.75) {\small{1}};
\node (2*) at (-180:.75) {\small{2}};
\node (3*) at (-270:.75) {\small{3}};
\node (4*) at (-360:.75) {\small{4}};
\draw[line width=1.4pt,->-] (1) circle arc (-160:70:.82);
\draw[line width=1.4pt,->-] (3) circle arc (-340:-110:.82);
\end{tikzpicture}
\hspace{.03in}
\raisebox{42pt}{\scalebox{2}{$=$}}
\hspace{.03in}
\begin{tikzpicture}
[scale=1,auto=left,every node/.style={circle,inner sep=0pt}]
\draw[line width=3pt] (0,0) circle (1cm);
\node (T) at (0:0) {\Huge{\textbf{T\textsubscript{r}}}};
\node (1) at (-90:1) {};
\node (2) at (-180:1) {};
\node (3) at (-270:1) {};
\node (4) at (-360:1) {};
\node (1*) at (-90:.75) {\small{2}};
\node (2*) at (-180:.75) {\small{3}};
\node (3*) at (-270:.75) {\small{4}};
\node (4*) at (-360:.75) {\small{1}};
\draw[line width=1.4pt,-<-] (1) circle arc (-20:-250:.82);
\draw[line width=1.4pt,-<-] (3) circle arc (-200:-430:.82);
\end{tikzpicture}}

\caption{Both closures $T(m_i) \mapsto \rho^1(T)(m_i)$ for a $2$-tangle $T$ undergoing the local move of Theorem \ref{thm: 2-tangles, semi-mutation}.  Here $T_r$ denotes that the orientation of every strand in $T$ has been reversed.}
\label{fig: semi-mutation example}
\end{figure}

\begin{example}
\label{ex: semi-mutation example 1}
Let $T$ contain an even number $k$ of positive half-twists, giving our local move $T \mapsto \rho^1(T)$ of the form

\begin{center}
\scalebox{.8}{
\begin{tikzpicture}
[scale=1,auto=left,every node/.style={circle,inner sep=0pt}]
\draw[line width=3pt] (0,0) circle (1cm);
\node (1) at (-90:1) {};
\node (2) at (-180:1) {};
\node (3) at (-270:1) {};
\node (4) at (-360:1) {};
\draw[line width=1.4pt,-<-] (1) to (-135:.4);
\draw[line width=1.4pt,->-] (2) to (-145:.45);
\draw[line width=1.4pt,-<-] (3) to (45:.4);
\draw[line width=1.4pt,->-] (4) to (40:.45);
\draw[line width=1.4pt,bend left=60] (-135:.4) to (55:.36);
\draw[line width=1.4pt,bend right=60] (-125:.36) to (45:.4);
\draw[rectangle,fill=white,rotate=-45] (146:.4) rectangle (-34:.4);
\end{tikzpicture}
\hspace{.2in}
\raisebox{22pt}{\scalebox{3.5}{$\Leftrightarrow$}}
\hspace{.2in}
\begin{tikzpicture}
[scale=1,rotate=-90,auto=left,every node/.style={circle,inner sep=0pt}]
\draw[line width=3pt] (0,0) circle (1cm);
\node (1) at (-90:1) {};
\node (2) at (-180:1) {};
\node (3) at (-270:1) {};
\node (4) at (-360:1) {};
\draw[line width=1.4pt,->-] (1) to (-135:.4);
\draw[line width=1.4pt,-<-] (2) to (-145:.45);
\draw[line width=1.4pt,->-] (3) to (45:.4);
\draw[line width=1.4pt,-<-] (4) to (40:.45);
\draw[line width=1.4pt,bend left=60] (-135:.4) to (55:.36);
\draw[line width=1.4pt,bend right=60] (-125:.36) to (45:.4);
\draw[rectangle,fill=white,rotate=-45] (146:.4) rectangle (-34:.4);
\end{tikzpicture}}
\end{center}

\noindent Here $T(m_1)$ is the unknot, while $T(m_2)$ is the $(2,k)$-torus link $L_{(2,k)}$.  Theorem \ref{thm: 2-tangles, semi-mutation} then states $f(\rho^1(T) \cup T') - f(T \cup T')$ is divisible by $1-f(L_{(2,k)})$ for any $2$-tangle $T'$, where $f(L_{(2,k)}) = \dfrac{(-1)^{k+1}A^{-2(k-1)}(1-A^{-12}+(-1)^k(A^{-4-4k}-A^{-8-4k}))}{1-A^{-8}}$.  By construction, this divisor $1 - f(L_{(2,k)})$ is maximal for all classical links with respect to $T \mapsto \rho^1(T)$.

%Now let $H$ denote the Hopf link, so that $f(H) = -A^{-4}-A^{4}$.  As $1 - f(L_{(2,k)})$ has breadth greater than $8$ for any $k > 2$, $f(H) - f(\bigcirc^2)$ cannot be divisible by $1- f(L_{(2,k)})$ for any $k > 2$.  It follows that $H$ cannot be unlinked into $\bigcirc^2$ by a finite sequence of these locals moves $T \mapsto \rho^1(T)$, for any $k > 2$.
\end{example}

\begin{example}
\label{ex: semi-mutation example 2}
Let $T$ contain any number $k$ of positive half-twists followed by a clasp, giving $T \mapsto \rho^1(T)$ of the form 

\begin{center}
\scalebox{.8}{
\begin{tikzpicture}
[scale=1,auto=left,every node/.style={circle,inner sep=0pt}]
\draw[line width=3pt] (0,0) circle (1cm);
\node (1) at (-90:1) {};
\node (2) at (-180:1) {};
\node (3) at (-270:1) {};
\node (4) at (-360:1) {};
\draw[line width=1.4pt,-->-] (1) to (-55:.52);
\draw[line width=1.4pt,--<-] (4) to (-45:.5);
\draw[line width=1.4pt,--<-] (2) to (170:.45);
\draw[line width=1.4pt,-->-,bend left=15] (3) to (92:.56);
\draw[line width=1.4pt,bend right=25] (170:.45) to (100:.43);
\draw[line width=1.4pt,bend left=45] (-45:.5) to (179:.46);
\draw[line width=1.4pt,bend right=50] (-35:.45) to (115:.6);
\draw[line width=1.4pt,bend left=55] (162:.53) to (115:.6);
\draw[rectangle,fill=white,rotate=45] (158:.5) rectangle (-27:.5);
\end{tikzpicture}
\hspace{.2in}
\raisebox{22pt}{\scalebox{3.5}{$\Leftrightarrow$}}
\hspace{.2in}
\begin{tikzpicture}
[scale=1,rotate=-90,auto=left,every node/.style={circle,inner sep=0pt}]
\draw[line width=3pt] (0,0) circle (1cm);
\node (1) at (-90:1) {};
\node (2) at (-180:1) {};
\node (3) at (-270:1) {};
\node (4) at (-360:1) {};
\draw[line width=1.4pt,--<-] (1) to (-55:.52);
\draw[line width=1.4pt,-->-] (4) to (-45:.5);
\draw[line width=1.4pt,-->-] (2) to (170:.45);
\draw[line width=1.4pt,--<-,bend left=15] (3) to (92:.56);
\draw[line width=1.4pt,bend right=25] (170:.45) to (100:.43);
\draw[line width=1.4pt,bend left=45] (-45:.5) to (179:.46);
\draw[line width=1.4pt,bend right=50] (-35:.45) to (115:.6);
\draw[line width=1.4pt,bend left=55] (162:.53) to (115:.6);
\draw[rectangle,fill=white,rotate=45] (158:.5) rectangle (-27:.5);
\end{tikzpicture}}
\end{center}

\noindent Here $T(m_1) = \mathcal{T}_k$ is the twist knot with $k$ half-twists, and $T(m_2) = H$ is the Hopf link.  It follows that $f(\rho^1(T) \cup T') - f(T \cup T')$ is divisible by $f(\mathcal{T}_k)-f(H)$ for any $T'$, where $f(H) = -A^{-4} - A^4$ and

\begin{center}
    $f(\mathcal{T}_k) = \begin{cases} \dfrac{1+A^{-4}+(-1)^kA^{-2k}-(-1)^{k+1}A^{-2k-6}}{-A^2+1} & \;\ k \;\ \text{odd}  \\[2.5ex]
\dfrac{-A^{6}-A^{2}-(-1)^kA^{6-2k}+(-1)^kA^{-2k}}{-A^2+1} & \;\ k \;\ \text{even}  
\end{cases}$
\end{center}

\noindent By construction, this divisor $f(\mathcal{T}_k) - f(H)$ is maximal for all classical links with respect to this particular local move $T \mapsto \rho^1(T)$.
\end{example}

We now expand our attention to rotational local moves on classical $n$-tangles for $n > 2$.  In the case of classical $3$-tangles, we may exploit of the symmetry of matchings in $\mathcal{M}_3$ to derive a specialization of Proposition \ref{thm: divisibility under rotational move} whose divisibility conditions are especially simple.

Denote the five elements of $\mathcal{M}_3$ by $m_{a_1} = ((1,2),(3,4),(5,6))$, $m_{a_2} = ((1,6),(2,3),(4,5))$, $m_{b_1} = ((1,2),(3,6),(4,5))$, $m_{b_2}=((1,4),(2,3),(5,6))$, and $m_{b_3} = ((1,6),(2,5),(3,4))$.  We then consider the local move $T \mapsto \rho^3(T)$, a $\pi$-radian rotation whose effect on these various closures is shown in Figure \ref{fig: 3-tangles under mutation}.

\begin{theorem}
\label{thm: 3-tangles, mutation}
Let $T$ be a classical $3$-tangle with standard orientation.  For any $3$-tangle $T'$ with compatible orientation, $f(\rho^3(T) \cup T') - f(T \cup T')$ is divisible by $f(T(m_{a_1})) - f(T(m_{a_2}))$.
\end{theorem}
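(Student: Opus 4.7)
The plan is to apply Proposition \ref{thm: divisibility under rotational move} directly, which requires computing $f(T(r^3(m))) - f(T(m))$ for every $m \in \mathcal{M}_3$ and taking the gcd. Since there are only five noncrossing $2$-equal matchings on six points, this reduces to a finite combinatorial check of the action of the rotation $r^3$ on $\mathcal{M}_3$.

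First I would determine how $r^3$ permutes the five elements of $\mathcal{M}_3$. A rotation by $\pi$ radians corresponds on the index set $[6]$ to the shift $i \mapsto i+3 \pmod{6}$. Applying this shift to each block of each matching gives
\begin{align*}
r^3(m_{a_1}) &= r^3\big(\,(1,2),(3,4),(5,6)\,\big) = (4,5),(6,1),(2,3) = m_{a_2}, \\
r^3(m_{a_2}) &= r^3\big(\,(1,6),(2,3),(4,5)\,\big) = (4,3),(5,6),(1,2) = m_{a_1}, \\
r^3(m_{b_1}) &= r^3\big(\,(1,2),(3,6),(4,5)\,\big) = (4,5),(6,3),(1,2) = m_{b_1}, \\
r^3(m_{b_2}) &= r^3\big(\,(1,4),(2,3),(5,6)\,\big) = (4,1),(5,6),(2,3) = m_{b_2}, \\
r^3(m_{b_3}) &= r^3\big(\,(1,6),(2,5),(3,4)\,\big) = (4,3),(5,2),(6,1) = m_{b_3}.
\end{align*}
Thus $r^3$ swaps $m_{a_1} \leftrightarrow m_{a_2}$ and fixes each of $m_{b_1}, m_{b_2}, m_{b_3}$.

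Next I would translate this action into the set of polynomial differences appearing in Proposition \ref{thm: divisibility under rotational move}. For $m \in \{m_{b_1}, m_{b_2}, m_{b_3}\}$ we have $T(r^3(m)) = T(m)$, so $f(T(r^3(m))) - f(T(m)) = 0$. For $m = m_{a_1}$ we have $f(T(r^3(m_{a_1}))) - f(T(m_{a_1})) = f(T(m_{a_2})) - f(T(m_{a_1}))$, and for $m = m_{a_2}$ we get the negative of the same polynomial. Hence
\[
\gcd \bigl\{\, f(T(r^3(m))) - f(T(m)) \,\bigr\}_{m \in \mathcal{M}_3} \;=\; f(T(m_{a_1})) - f(T(m_{a_2}))
\]
up to units in $\Z[A,A^{-1}]$, since in a GCD the zero entries contribute nothing.

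Invoking Proposition \ref{thm: divisibility under rotational move} with $k = 3$ then gives the claimed divisibility. No serious obstacle is anticipated — the entire argument hinges on the elementary observation that the involution $i \mapsto i+3 \pmod 6$ on the boundary of the disk fixes three of the five noncrossing matchings on six points and swaps the other two, which is immediate from the block-by-block computation above.
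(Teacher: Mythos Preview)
Your proposal is correct and follows essentially the same approach as the paper: both apply Proposition~\ref{thm: divisibility under rotational move} after verifying that $r^3$ swaps $m_{a_1} \leftrightarrow m_{a_2}$ and fixes each $m_{b_i}$, so that the set of differences reduces to $\pm\bigl(f(T(m_{a_1})) - f(T(m_{a_2}))\bigr)$ and zeros. The only difference is that you spell out the block-by-block computation of the $r^3$-action explicitly, whereas the paper simply states the result of that computation.
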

\begin{proof}
We have $r^3(m_{a_1}) = m_{a_2}$, $r^3(m_{a_2}) = m_{a_1}$, and $r^3(m_{b_i}) = m_{b_i}$ for $i=1,2,3$.  It follows that $f(T(r^3(m_{a_1}))) - f(T(m_{a_1})) = f(T(m_{a_2})) - f(T(m_{a_1})) = - f(T(r^3(m_{a_2}))) + f(T(m_{a_2}))$ and $f(T(r^3(m_{b_i}))) - f(T(m_{b_i})) = 0$ for $i=1,2,3$.  The theorem then follows from Proposition \ref{thm: divisibility under rotational move}.
\end{proof}

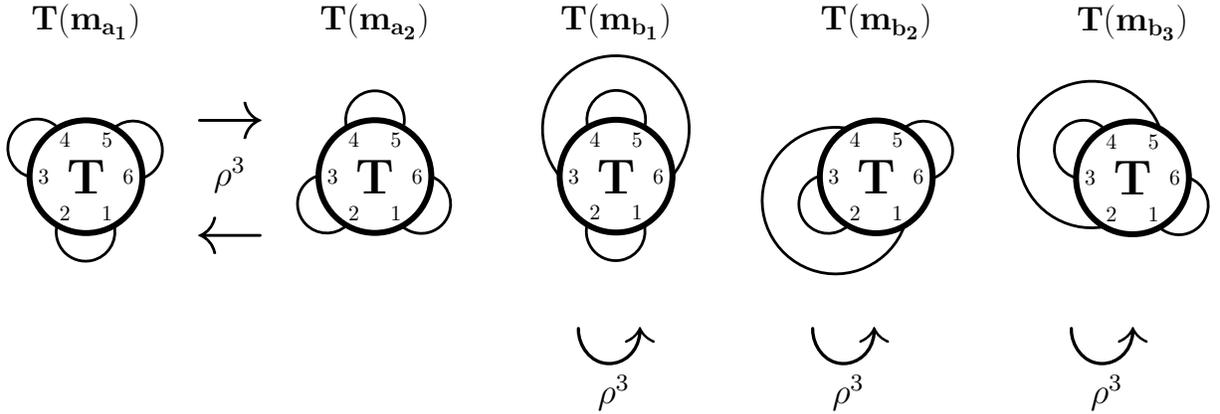
\begin{figure}[ht!]
\centering
\setlength{\tabcolsep}{2pt}
\begin{tabular}{c c c}
\scalebox{.75}{
\begin{tikzpicture}
[scale=1,auto=left,every node/.style={circle,inner sep=0pt}]
\draw[line width=3pt] (0,0) circle (1cm);
\node (T) at (0:0) {\Huge{\textbf{T}}};
\node (1) at (-60:1) {};
\node (2) at (-120:1) {};
\node (3) at (-180:1) {};
\node (4) at (-240:1) {};
\node (5) at (-300:1) {};
\node (6) at (-360:1) {};
\node (1*) at (-60:.75) {\small{1}};
\node (2*) at (-120:.75) {\small{2}};
\node (3*) at (-180:.75) {\small{3}};
\node (4*) at (-240:.75) {\small{4}};
\node (5*) at (-300:.75) {\small{5}};
\node (6*) at (-360:.75) {\small{6}};
\draw[line width=1.4pt] (3) circle arc (-105:-310:.52);
\draw[line width=1.4pt] (5) circle arc (130:-75:.52);
\draw[line width=1.4pt] (1) circle arc (12.5:-192.5:.52);
\node (Ta1) at (90:2.75) {\Large{$\mathbf{T(m_{a_1})}$}};
\end{tikzpicture}} &
\raisebox{28pt}{
\scalebox{.75}{
$\begin{matrix}
\scalebox{3}{$\rightarrow$} \\
\raisebox{5pt}{\scalebox{1.5}{$\rho^3$}} \\
\scalebox{3}{$\leftarrow$}
\end{matrix}$}} &
\raisebox{10pt}{
\scalebox{.75}{
\begin{tikzpicture}
[scale=1,auto=left,every node/.style={circle,inner sep=0pt}]
\draw[line width=3pt] (0,0) circle (1cm);
\node (T) at (0:0) {\Huge{\textbf{T}}};
\node (1) at (-60:1) {};
\node (2) at (-120:1) {};
\node (3) at (-180:1) {};
\node (4) at (-240:1) {};
\node (5) at (-300:1) {};
\node (6) at (-360:1) {};
\node (1*) at (-60:.75) {\small{1}};
\node (2*) at (-120:.75) {\small{2}};
\node (3*) at (-180:.75) {\small{3}};
\node (4*) at (-240:.75) {\small{4}};
\node (5*) at (-300:.75) {\small{5}};
\node (6*) at (-360:.75) {\small{6}};
\draw[line width=1.4pt] (4) circle arc (-165:-370:.52);
\draw[line width=1.4pt] (6) circle arc (70:-135:.52);
\draw[line width=1.4pt] (2) circle arc (-47.5:-252.5:.52);
\node (Ta2) at (90:2.75) {\Large{$\mathbf{T(m_{a_2})}$}};
\end{tikzpicture}}}
\end{tabular}
\hspace{.03in}
\raisebox{-25pt}{
\setlength{\tabcolsep}{7pt}
\begin{tabular}{c c c}
\scalebox{.75}{
\begin{tikzpicture}
[scale=1,auto=left,every node/.style={circle,inner sep=0pt}]
\draw[line width=3pt] (0,0) circle (1cm);
\node (T) at (0:0) {\Huge{\textbf{T}}};
\node (1) at (-60:1) {};
\node (2) at (-120:1) {};
\node (3) at (-180:1) {};
\node (4) at (-240:1) {};
\node (5) at (-300:1) {};
\node (6) at (-360:1) {};
\node (1*) at (-60:.75) {\small{1}};
\node (2*) at (-120:.75) {\small{2}};
\node (3*) at (-180:.75) {\small{3}};
\node (4*) at (-240:.75) {\small{4}};
\node (5*) at (-300:.75) {\small{5}};
\node (6*) at (-360:.75) {\small{6}};
\draw[line width=1.4pt] (1) circle arc (12.5:-192.5:.52);
\draw[line width=1.4pt] (4) circle arc (-165:-370:.52);
\draw[line width=1.4pt] (3) circle arc (220:-40:1.3);
\node (Tb1) at (90:2.75) {\Large{$\mathbf{T(m_{b_1})}$}};
\end{tikzpicture}} &
\raisebox{-8pt}{
\scalebox{.75}{
\begin{tikzpicture}
[scale=1,auto=left,every node/.style={circle,inner sep=0pt}]
\draw[line width=3pt] (0,0) circle (1cm);
\node (T) at (0:0) {\Huge{\textbf{T}}};
\node (1) at (-60:1) {};
\node (2) at (-120:1) {};
\node (3) at (-180:1) {};
\node (4) at (-240:1) {};
\node (5) at (-300:1) {};
\node (6) at (-360:1) {};
\node (1*) at (-60:.75) {\small{1}};
\node (2*) at (-120:.75) {\small{2}};
\node (3*) at (-180:.75) {\small{3}};
\node (4*) at (-240:.75) {\small{4}};
\node (5*) at (-300:.75) {\small{5}};
\node (6*) at (-360:.75) {\small{6}};
\draw[line width=1.4pt] (5) circle arc (130:-75:.52);
\draw[line width=1.4pt] (2) circle arc (-47.5:-252.5:.52);
\draw[line width=1.4pt] (1) circle arc (340:80:1.3);
\node (Tb2) at (90:2.75) {\Large{$\mathbf{T(m_{b_2})}$}};
\end{tikzpicture}}} &
\raisebox{10pt}{
\scalebox{.75}{
\begin{tikzpicture}
[scale=1,auto=left,every node/.style={circle,inner sep=0pt}]
\draw[line width=3pt] (0,0) circle (1cm);
\node (T) at (0:0) {\Huge{\textbf{T}}};
\node (1) at (-60:1) {};
\node (2) at (-120:1) {};
\node (3) at (-180:1) {};
\node (4) at (-240:1) {};
\node (5) at (-300:1) {};
\node (6) at (-360:1) {};
\node (1*) at (-60:.75) {\small{1}};
\node (2*) at (-120:.75) {\small{2}};
\node (3*) at (-180:.75) {\small{3}};
\node (4*) at (-240:.75) {\small{4}};
\node (5*) at (-300:.75) {\small{5}};
\node (6*) at (-360:.75) {\small{6}};
\draw[line width=1.4pt] (3) circle arc (-105:-310:.52);
\draw[line width=1.4pt] (6) circle arc (70:-135:.52);
\draw[line width=1.4pt] (2) circle arc (280:20:1.3);
\node (Tb3) at (90:2.75) {\Large{$\mathbf{T(m_{b_3})}$}};
\end{tikzpicture}}} \\
\scalebox{.75}{
$\begin{matrix}
\scalebox{3.5}{\rotatebox{180}{$\curvearrowleft$}} \\
\scalebox{1.5}{$\rho^3$} 
\end{matrix}$} &
\scalebox{.75}{
$\begin{matrix}
\scalebox{3.5}{\rotatebox{180}{$\curvearrowleft$}} \\
\scalebox{1.5}{$\rho^3$} 
\end{matrix}$} &
\scalebox{.75}{
$\begin{matrix}
\scalebox{3.5}{\rotatebox{180}{$\curvearrowleft$}} \\
\scalebox{1.5}{$\rho^3$} 
\end{matrix}$}
\end{tabular}}
\caption{Up to reversal of all strands, the way in which the local move $T(m_i) \mapsto \rho^3(T)(m_i)$ from Theorem \ref{thm: 3-tangles, mutation} permutes the five closures of the classical $3$-tangle $T$.}
\label{fig: 3-tangles under mutation}
\end{figure}

Perhaps the best known example of a local move involving $3$-tangles is the $\Delta$-move, an unknotting operation whose unorientated form is shown in Figure \ref{fig: delta move}.  As shown by Murakami and Nakanishi \cite{MurakamiNakanishi}, the $\Delta$-move possesses the interesting property that two links $L_1,L_2$ may be connected by a single $\Delta$-move (of any orientation) if and only if $L_1,L_2$ may be connected by a single oriented $\Delta$-move with standard orientation.  Since a $\Delta$-move with standard orientation qualifies as a local move of the form $T \mapsto \rho^3(T)$, Theorem \ref{thm: 3-tangles, mutation} may then be applied to give a significantly simpler proof of the $\Delta$-move divisibility theorem presented by Ganzell \cite{Ganzell}: 

\begin{theorem}
\label{thm: delta move}
Let $L$ and $L'$ be a pair of classical links that are separated by a single $\Delta$-move.  Then $f(L) - f(L')$ is divisible by $A^{16} - A^{12} - A^4 + 1$.  Furthermore, this divisor is maximal for all classical links with respect to the $\Delta$-move.
\end{theorem}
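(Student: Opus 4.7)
The plan is to reduce to the rotational setting of Theorem \ref{thm: 3-tangles, mutation} and then collapse the divisibility statement to a single explicit bracket computation. First, I would invoke the Murakami--Nakanishi theorem \cite{MurakamiNakanishi} cited in the introduction: two classical links related by a single unoriented $\Delta$-move are always related by a $\Delta$-move whose three strands carry the standard alternating orientation. This reduces the problem to $3$-tangles with orientation $\vec{v} = (+-+-+-)$, placing us in the setting of Theorem \ref{thm: 3-tangles, mutation}.

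Next, I would identify the $\Delta$-move with standard orientation as a local move of the form $T \mapsto \rho^3(T)$ for a specific $3$-tangle $T$: namely, one of the two triangular crossing configurations, which the $\pi$-rotation exchanges with the other (after the strand-reversal built into the definition of $\rho$, which preserves writhes of crossings). With this identification in hand, Theorem \ref{thm: 3-tangles, mutation} immediately yields that $f(L) - f(L')$ is divisible by $f(T(m_{a_1})) - f(T(m_{a_2}))$ for $m_{a_1} = ((1,2),(3,4),(5,6))$ and $m_{a_2} = ((1,6),(2,3),(4,5))$. A direct diagrammatic inspection then identifies $T(m_{a_1})$ and $T(m_{a_2})$ as small classical links (one a trefoil, the other an unknot, up to writhe corrections), and computing their auxiliary polynomials reduces the difference to $\pm A^k(A^{16} - A^{12} - A^4 + 1)$ for some $k \in \mathbb{Z}$.

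The main obstacle is making the identification in the second step fully precise: selecting the correct handedness for $T$, verifying that $\rho^3(T)$ yields the opposite handedness up to isotopy within the ball, and tracking the induced orientations and writhes on the closures so that the correction $f = (-A^3)^{-w}\langle \cdot \rangle$ produces the right overall power of $A$. Once that identification is nailed down, the remaining algebra collapses to a short bracket-polynomial calculation on two low-crossing diagrams. For maximality, I would exhibit an explicit pair of classical links related by a single $\Delta$-move whose auxiliary polynomials differ by exactly $A^{16} - A^{12} - A^4 + 1$; Ganzell's pair \cite{Ganzell} comparing a trefoil to the unknot realizes this difference, which forces the divisor to be maximal among classical links with respect to the $\Delta$-move.
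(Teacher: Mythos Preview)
Your proposal is correct and follows essentially the same route as the paper: reduce to standard orientation via Murakami--Nakanishi, recognize the standard-orientation $\Delta$-move as $T \mapsto \rho^3(T)$, apply Theorem~\ref{thm: 3-tangles, mutation}, and identify $T(m_{a_1})$ and $T(m_{a_2})$ as the unknot and the trefoil so that the divisor is $f(1)-f(3_1)=A^{16}-A^{12}-A^4+1$, with maximality following because this difference is actually realized. One small clean-up: since $f$ is already writhe-normalized, there is no stray $A^k$ factor in $f(T(m_{a_1}))-f(T(m_{a_2}))$; the difference is exactly $\pm(A^{16}-A^{12}-A^4+1)$.
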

\begin{proof}
For a standard orientation version of the tangle on the left side of Figure \ref{fig: delta move}, $T(m_{a_1})$ is the unknot and $T(m_{a_2})$ is the left-handed trefoil.  In the case of standard orientation, Theorem \ref{thm: 3-tangles, mutation} then states that $f(L)-f(L')$ is then divisible by $f(1) - f(3_1) = A^{16} - A^{12} - A^4 + 1$.  The general result follows from the observation of Murakami and Nakanishi \cite{MurakamiNakanishi}.  The fact that this divisor is maximal follows from the fact that it equals $f(1) - f(3_1)$.
\end{proof}

\begin{figure}[ht!]
\centering
\scalebox{.8}{
\begin{tikzpicture}
[scale=1,auto=left,every node/.style={circle,inner sep=0pt}]
\draw[line width=3pt] (0,0) circle (1cm);
\node (1) at (-60:1) {};
\node (2) at (-120:1) {};
\node (3) at (-180:1) {};
\node (4) at (-240:1) {};
\node (5) at (-300:1) {};
\node (6) at (-360:1) {};
\draw[line width=1.4pt] (1) to (-80:.58);
\draw[line width=1.4pt,bend right=25] (4) to (-95:.46);
\draw[line width=1.4pt,bend right=25] (2) to (-335:.46);
\draw[line width=1.4pt] (-320:.58) to (5);
\draw[line width=1.4pt,bend right=25] (6) to (-215:.46);
\draw[line width=1.4pt] (-200:.58) to (3);
\end{tikzpicture}
\hspace{.2in}
\raisebox{22pt}{\scalebox{3.5}{$\Leftrightarrow$}}
\hspace{.2in}
\begin{tikzpicture}
[scale=1,rotate=180,auto=left,every node/.style={circle,inner sep=0pt}]
\draw[line width=3pt] (0,0) circle (1cm);
\node (1) at (-60:1) {};
\node (2) at (-120:1) {};
\node (3) at (-180:1) {};
\node (4) at (-240:1) {};
\node (5) at (-300:1) {};
\node (6) at (-360:1) {};
\draw[line width=1.4pt] (1) to (-80:.58);
\draw[line width=1.4pt,bend right=25] (4) to (-95:.46);
\draw[line width=1.4pt,bend right=25] (2) to (-335:.46);
\draw[line width=1.4pt] (-320:.58) to (5);
\draw[line width=1.4pt,bend right=25] (6) to (-215:.46);
\draw[line width=1.4pt] (-200:.58) to (3);
\end{tikzpicture}}
\caption{The $\Delta$ move.}
\label{fig: delta move}
\end{figure}
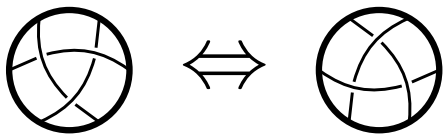

The method of Theorem \ref{thm: 3-tangles, mutation} may be adapted to local moves $T \mapsto \rho^2(T)$ where the classical $3$-tangle is rotated by $\frac{2 \pi}{3}$-radians.  However, due to the manner in which a $\frac{2 \pi}{3}$-radian rotation permutes the various closures of $T$, the result is somewhat less elegant:

\begin{theorem}
\label{thm: 3-tangles, one-third rotation}
Let $T$ be a classical $3$-tangle with standard orientation.  For any $3$-tangle $T'$ with compatible orientation, $f(\rho^2(T) \cup T') - f(T \cup T')$ is divisible by the greatest common divisor of $f(T(m_{b_2})) - f(T(m_{b_1}))$ and $f(T(m_{b_3})) - f(T(m_{b_2}))$.
\end{theorem}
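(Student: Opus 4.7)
The plan is to apply Proposition \ref{thm: divisibility under rotational move} with $n=3$, $k=2$, and then carefully track how a $\frac{2\pi}{3}$-radian (counterclockwise) rotation permutes the five noncrossing matchings in $\mathcal{M}_3$. Since each endpoint $i$ sits at radial coordinate $\theta_i = -\frac{\pi i}{3}$, the rotation $r^2$ sends the point labelled $i$ to the point labelled $i-2 \pmod 6$. Applying this index shift to the blocks of each matching is a routine check and is the only concrete computation needed.

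Carrying out that check gives: $r^2(m_{a_1}) = m_{a_1}$ and $r^2(m_{a_2}) = m_{a_2}$, while $r^2$ cyclically permutes the $b$-type matchings as $m_{b_1} \mapsto m_{b_2} \mapsto m_{b_3} \mapsto m_{b_1}$. Consequently the set $\lbrace f(T(r^2(m))) - f(T(m)) \rbrace_{m \in \mathcal{M}_3}$ consists of two zeros (from $m_{a_1}, m_{a_2}$) together with the three differences
\begin{align*}
\alpha &= f(T(m_{b_2})) - f(T(m_{b_1})), \\
\beta &= f(T(m_{b_3})) - f(T(m_{b_2})), \\
\gamma &= f(T(m_{b_1})) - f(T(m_{b_3})).
\end{align*}

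The final step is to observe that $\gamma = -(\alpha + \beta)$, so $\gamma$ lies in the ideal generated by $\alpha$ and $\beta$. Therefore $\gcd \lbrace 0, 0, \alpha, \beta, \gamma \rbrace = \gcd(\alpha, \beta)$, and Proposition \ref{thm: divisibility under rotational move} immediately yields the claimed divisibility of $f(\rho^2(T) \cup T') - f(T \cup T')$ by $\gcd(\alpha, \beta)$. There is no genuine obstacle here; the only place care is required is in getting the direction of the rotation right (clockwise on $T$ versus counterclockwise on the matching $m$, per the convention established just before Proposition \ref{thm: divisibility under rotational move}) when verifying that $r^2$ acts on the $m_{b_i}$ as a $3$-cycle rather than its inverse, but either cyclic direction yields the same gcd up to sign.
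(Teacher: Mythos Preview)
Your proof is correct and follows essentially the same approach as the paper: apply Proposition \ref{thm: divisibility under rotational move}, observe that $r^2$ fixes $m_{a_1}, m_{a_2}$ and cyclically permutes $m_{b_1}, m_{b_2}, m_{b_3}$, and then note that the third $b$-type difference is redundant since it is (minus) the sum of the other two. Your additional remarks about the rotation direction and the irrelevance of which $3$-cycle occurs are accurate and do not alter the argument.
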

\begin{proof}
For this rotation $r^2(m_{a_1}) = m_{a_1}$, $r^2(m_{a_2}) = m_{a_2}$, $r^2(m_{b_1}) = b_2$, $r^2(m_{b_2}) = m_{b_3}$, and $r^2(m_{b_3}) = m_{b_1}$.  By Proposition \ref{thm: divisibility under rotational move}, it follows that $f(\rho^2(T) \cup T') - f(T \cup T')$ is divisible by the greatest common divisor of $f(T(m_{b_2})) - f(T(m_{b_1}))$, $f(T(m_{b_3})) - f(T(m_{b_2}))$, and $f(T(m_{b_1})) - f(T(m_{b_3}))$.  The result follows from the fact that any divisor of those first two differences is necessarily a divisor of the third difference.
\end{proof}

%An application of Proposition \ref{thm: divisibility under rotational move} to classical $n$-tangles with $n>3$ leads to even less elegant results.  This is due to the fact that, for every $n>3$, there exists a non-crossing matching on $2n$ points with the maximal rotational order of $2n$: for every $n > 3$ there exists at least one $m \in \mathcal{M}_n$ such that $r^k(m) \neq m$ for every $1 \leq k \leq 2n-1$.  For any such $m \in \mathcal{M}_n$, one can show that the difference $f(T(r^i(m))) - f(T(r^j(m)))$ is a distinct Laurent polynomial for each of the $\binom{2n}{2}$ distinct choices of $0 \leq i \lneq j \leq 2n-1$.  This means that, at least for a general $n$-tangle $T$ with $n > 3$, it is impossible to reduce divisibility conditions down to a single term for any rotational move of the form $T \mapsto r^k(T)$.

\subsection{The double-$\Delta$-move}
\label{subsec: double-delta}

For the remainder of this paper, we explore a handful of additional local moves to which the results of Subsection \ref{subsec: rotational local moves} do not immediately apply.

We begin with the double-$\Delta$-move for classical $6$-tangles, whose unoriented form is shown in Figure \ref{fig: double-delta move}.  As originally defined by Naik and Stanford \cite{NaikStanford}, the double-$\Delta$-move is assumed to involve tangles where each pair of parallel strands are oriented in opposite directions.

\begin{figure}[ht!]
\centering
\scalebox{.9}{
\begin{tikzpicture}
[scale=1,auto=left,every node/.style={circle,inner sep=0pt}]
\node (1) at (-60:1) {};
\node (2) at (-120:1) {};
\node (3) at (-180:1) {};
\node (4) at (-240:1) {};
\node (5) at (-300:1) {};
\node (6) at (-360:1) {};
\draw[line width=6.3pt] (1) to (-80:.58);
\draw[line width=6.3pt] (-320:.58) to (5);
\draw[line width=6.3pt] (-200:.58) to (3);
\draw[line width=3.5pt,color=white] (1) to (-80:.58);
\draw[line width=3.5pt,color=white] (-320:.58) to (5);
\draw[line width=3.5pt,color=white] (-200:.58) to (3);
\draw[line width=9.5pt,bend right=25,color=white] (4) to (-100:.41);
\draw[line width=9.5pt,bend right=25,color=white] (2) to (-340:.41);
\draw[line width=9.5pt,bend right=25,color=white] (6) to (-220:.41);
\draw[line width=6.3pt,bend right=25] (4) to (-100:.41);
\draw[line width=6.3pt,bend right=25] (2) to (-340:.41);
\draw[line width=6.3pt,bend right=25] (6) to (-220:.41);
\draw[line width=3.5pt,bend right=25,color=white] (4) to (-99:.42);
\draw[line width=3.5pt,bend right=25,color=white] (2) to (-339:.42);
\draw[line width=3.5pt,bend right=25,color=white] (6) to (-219:.42);
\draw[line width=3.5pt] (0,0) circle (1cm);
\node (1l) at (-52:1.3) {\small{1}};
\node (2l) at (-68:1.3) {\small{2}};
\node (3l) at (-112:1.3) {\small{3}};
\node (4l) at (-130:1.3) {\small{4}};
\node (5l) at (-171:1.25) {\small{5}};
\node (6l) at (-189:1.25) {\small{6}};
\node (7l) at (-232:1.3) {\small{7}};
\node (8l) at (-250:1.3) {\small{8}};
\node (9l) at (-291:1.3) {\small{9}};
\node (10l) at (-308:1.3) {\small{10}};
\node (11l) at (-352:1.3) {\small{11}};
\node (12l) at (-370:1.3) {\small{12}};
\end{tikzpicture}
\hspace{.2in}
\raisebox{26pt}{\scalebox{3.5}{$\Leftrightarrow$}}
\hspace{.2in}
\begin{tikzpicture}
[scale=1,auto=left,every node/.style={circle,inner sep=0pt},rotate=180]
\node (1) at (-60:1) {};
\node (2) at (-120:1) {};
\node (3) at (-180:1) {};
\node (4) at (-240:1) {};
\node (5) at (-300:1) {};
\node (6) at (-360:1) {};
\draw[line width=6.3pt] (1) to (-80:.58);
\draw[line width=6.3pt] (-320:.58) to (5);
\draw[line width=6.3pt] (-200:.58) to (3);
\draw[line width=3.5pt,color=white] (1) to (-80:.58);
\draw[line width=3.5pt,color=white] (-320:.58) to (5);
\draw[line width=3.5pt,color=white] (-200:.58) to (3);
\draw[line width=9.5pt,bend right=25,color=white] (4) to (-100:.41);
\draw[line width=9.5pt,bend right=25,color=white] (2) to (-340:.41);
\draw[line width=9.5pt,bend right=25,color=white] (6) to (-220:.41);
\draw[line width=6.3pt,bend right=25] (4) to (-100:.41);
\draw[line width=6.3pt,bend right=25] (2) to (-340:.41);
\draw[line width=6.3pt,bend right=25] (6) to (-220:.41);
\draw[line width=3.5pt,bend right=25,color=white] (4) to (-99:.42);
\draw[line width=3.5pt,bend right=25,color=white] (2) to (-339:.42);
\draw[line width=3.5pt,bend right=25,color=white] (6) to (-219:.42);
\draw[line width=3.5pt] (0,0) circle (1cm);
\node (1l) at (-52:1.3) {\small{7}};
\node (2l) at (-68:1.3) {\small{8}};
\node (3l) at (-112:1.3) {\small{9}};
\node (4l) at (-130:1.3) {\small{10}};
\node (5l) at (-171:1.3) {\small{11}};
\node (6l) at (-189:1.3) {\small{12}};
\node (7l) at (-233:1.3) {\small{1}};
\node (8l) at (-249:1.3) {\small{2}};
\node (9l) at (-291:1.3) {\small{3}};
\node (10l) at (-308:1.3) {\small{4}};
\node (11l) at (-352:1.25) {\small{5}};
\node (12l) at (-370:1.25) {\small{6}};
\end{tikzpicture}}
\caption{The double-$\Delta$-move.}
\label{fig: double-delta move}
\end{figure}
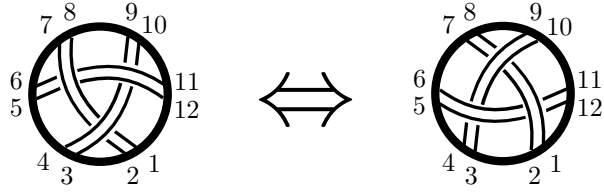

It is straightforward to show that two links $L_1,L_2$ may be connected by a finite sequence of double-$\Delta$-moves if and only if $L_1,L_2$ may be connected by a finite sequences of double-$\Delta$-moves that all involve standard orientation $6$-tangles.  As shown in Figure \ref{fig: double-delta orientation bypass}, any non-standard orientation double-$\Delta$-move may be bypassed by performing a Reidemeister II move on any pair of strands that aren't in the proper orientation, and then redefining the tangle boundary to obtain a standard orientation $6$-tangle.  As was the case with the original $\Delta$-move, this technique allows us to prove a general divisibility result for the double-$\Delta$-move merely by checking divisibility of $f(L_2)-f(L_1)$ in the case of standard standard orientation.

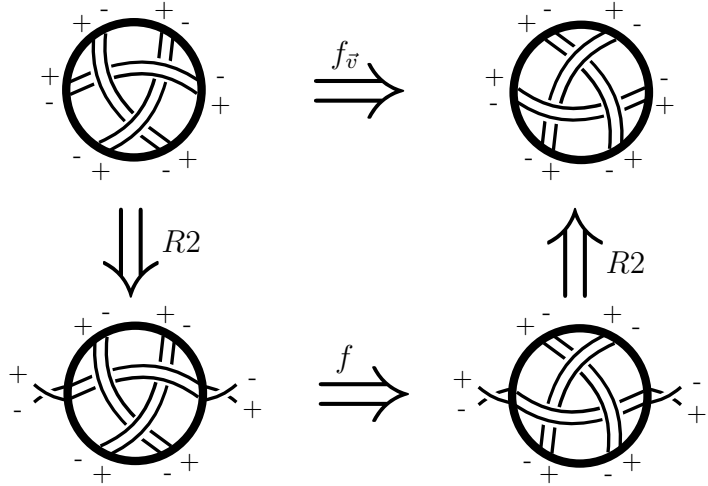
\begin{figure}[ht!]
\centering
\setlength{\tabcolsep}{8pt}
\begin{tabular}{c c c}
\scalebox{.9}{
\begin{tikzpicture}
[scale=1,auto=left,every node/.style={circle,inner sep=0pt}]
\node (1) at (-60:1) {};
\node (2) at (-120:1) {};
\node (3) at (-180:1) {};
\node (4) at (-240:1) {};
\node (5) at (-300:1) {};
\node (6) at (-360:1) {};
\draw[line width=6.3pt] (1) to (-80:.58);
\draw[line width=6.3pt] (-320:.58) to (5);
\draw[line width=6.3pt] (-200:.58) to (3);
\draw[line width=3.5pt,color=white] (1) to (-80:.58);
\draw[line width=3.5pt,color=white] (-320:.58) to (5);
\draw[line width=3.5pt,color=white] (-200:.58) to (3);
\draw[line width=9.5pt,bend right=25,color=white] (4) to (-100:.41);
\draw[line width=9.5pt,bend right=25,color=white] (2) to (-340:.41);
\draw[line width=9.5pt,bend right=25,color=white] (6) to (-220:.41);
\draw[line width=6.3pt,bend right=25] (4) to (-100:.41);
\draw[line width=6.3pt,bend right=25] (2) to (-340:.41);
\draw[line width=6.3pt,bend right=25] (6) to (-220:.41);
\draw[line width=3.5pt,bend right=25,color=white] (4) to (-99:.42);
\draw[line width=3.5pt,bend right=25,color=white] (2) to (-339:.42);
\draw[line width=3.5pt,bend right=25,color=white] (6) to (-219:.42);
\draw[line width=3.5pt] (0,0) circle (1cm);
\node (1l) at (-52:1.3) {\small{+}};
\node (2l) at (-68:1.3) {\small{-}};
\node (3l) at (-112:1.3) {\small{+}};
\node (4l) at (-130:1.3) {\small{-}};
\node (5l) at (-171:1.25) {\small{-}};
\node (6l) at (-189:1.25) {\small{+}};
\node (7l) at (-232:1.25) {\small{+}};
\node (8l) at (-250:1.25) {\small{-}};
\node (9l) at (-291:1.25) {\small{+}};
\node (10l) at (-308:1.25) {\small{-}};
\node (11l) at (-352:1.3) {\small{-}};
\node (12l) at (-370:1.3) {\small{+}};
\end{tikzpicture}}
&
\raisebox{26pt}{\scalebox{3.2}{$\Rightarrow$} \kern-34pt \raisebox{20pt}{$f_{\vec{v}}$}}
\hspace{.15in}
&
\scalebox{.9}{
\begin{tikzpicture}
[scale=1,auto=left,every node/.style={circle,inner sep=0pt},rotate=180]
\node (1) at (-60:1) {};
\node (2) at (-120:1) {};
\node (3) at (-180:1) {};
\node (4) at (-240:1) {};
\node (5) at (-300:1) {};
\node (6) at (-360:1) {};
\draw[line width=6.3pt] (1) to (-80:.58);
\draw[line width=6.3pt] (-320:.58) to (5);
\draw[line width=6.3pt] (-200:.58) to (3);
\draw[line width=3.5pt,color=white] (1) to (-80:.58);
\draw[line width=3.5pt,color=white] (-320:.58) to (5);
\draw[line width=3.5pt,color=white] (-200:.58) to (3);
\draw[line width=9.5pt,bend right=25,color=white] (4) to (-100:.41);
\draw[line width=9.5pt,bend right=25,color=white] (2) to (-340:.41);
\draw[line width=9.5pt,bend right=25,color=white] (6) to (-220:.41);
\draw[line width=6.3pt,bend right=25] (4) to (-100:.41);
\draw[line width=6.3pt,bend right=25] (2) to (-340:.41);
\draw[line width=6.3pt,bend right=25] (6) to (-220:.41);
\draw[line width=3.5pt,bend right=25,color=white] (4) to (-99:.42);
\draw[line width=3.5pt,bend right=25,color=white] (2) to (-339:.42);
\draw[line width=3.5pt,bend right=25,color=white] (6) to (-219:.42);
\draw[line width=3.5pt] (0,0) circle (1cm);
\node (1l) at (-52:1.3) {\small{+}};
\node (2l) at (-68:1.3) {\small{-}};
\node (3l) at (-112:1.3) {\small{+}};
\node (4l) at (-130:1.3) {\small{-}};
\node (5l) at (-171:1.25) {\small{-}};
\node (6l) at (-189:1.25) {\small{+}};
\node (7l) at (-232:1.25) {\small{+}};
\node (8l) at (-250:1.25) {\small{-}};
\node (9l) at (-291:1.25) {\small{+}};
\node (10l) at (-308:1.25) {\small{-}};
\node (11l) at (-352:1.3) {\small{-}};
\node (12l) at (-370:1.3) {\small{+}};
\end{tikzpicture}}
\\
\hspace{.15in} \scalebox{3.2}{$\Downarrow$} \kern-4pt \raisebox{10pt}{$R2$}
&
&
\hspace{.15in} \scalebox{3.2}{$\Uparrow$} \kern-4pt \raisebox{2pt}{$R2$}
\\
\scalebox{.9}{
\begin{tikzpicture}
[scale=1,auto=left,every node/.style={circle,inner sep=0pt}]
\node (1) at (-60:1) {};
\node (2) at (-120:1) {};
\node (3) at (-180:1) {};
\node (4) at (-240:1) {};
\node (5) at (-300:1) {};
\node (6) at (-360:1) {};
\draw[line width=6.3pt] (1) to (-80:.58);
\draw[line width=6.3pt] (-320:.58) to (5);
\draw[line width=6.3pt] (-200:.58) to (3);
\draw[line width=3.5pt,color=white] (1) to (-80:.58);
\draw[line width=3.5pt,color=white] (-320:.58) to (5);
\draw[line width=3.5pt,color=white] (-200:.58) to (3);
\draw[line width=9.5pt,bend right=25,color=white] (4) to (-100:.41);
\draw[line width=9.5pt,bend right=25,color=white] (2) to (-340:.41);
\draw[line width=9.5pt,bend right=25,color=white] (6) to (-220:.41);
\draw[line width=6.3pt,bend right=25] (4) to (-100:.41);
\draw[line width=6.3pt,bend right=25] (2) to (-340:.41);
\draw[line width=6.3pt,bend right=25] (6) to (-220:.41);
\draw[line width=3.5pt,bend right=25,color=white] (4) to (-99:.42);
\draw[line width=3.5pt,bend right=25,color=white] (2) to (-339:.42);
\draw[line width=3.5pt,bend right=25,color=white] (6) to (-219:.42);
\draw[line width=1.4pt,bend right=15] (-186:1) to (-175:1.5);
\draw[line width=4.4pt,bend left=15,color=white] (-174:1) to (-185:1.5);
\draw[line width=1.4pt,bend left=15] (-174:1) to (-185:1.5);
\draw[line width=1.4pt,bend left=15] (6:1) to (-5:1.5);
\draw[line width=4.4pt,bend right=15,color=white] (-6:1) to (5:1.5);
\draw[line width=1.4pt,bend right=15] (-6:1) to (5:1.5);
\draw[line width=3.5pt] (0,0) circle (1cm);
\node (1l) at (-52:1.3) {\small{+}};
\node (2l) at (-68:1.3) {\small{-}};
\node (3l) at (-112:1.3) {\small{+}};
\node (4l) at (-130:1.3) {\small{-}};
\node (5l) at (-171:1.75) {\small{-}};
\node (6l) at (-189:1.75) {\small{+}};
\node (7l) at (-232:1.25) {\small{+}};
\node (8l) at (-250:1.25) {\small{-}};
\node (9l) at (-291:1.25) {\small{+}};
\node (10l) at (-308:1.25) {\small{-}};
\node (11l) at (-353:1.75) {\small{-}};
\node (12l) at (-368:1.75) {\small{+}};
\end{tikzpicture}}
&
\raisebox{26pt}{\scalebox{3.2}{$\Rightarrow$} \kern-34pt \raisebox{20pt}{$f$}} \hspace{.15in}
&
\scalebox{.9}{
\begin{tikzpicture}
[scale=1,auto=left,every node/.style={circle,inner sep=0pt},rotate=180]
\node (1) at (-60:1) {};
\node (2) at (-120:1) {};
\node (3) at (-180:1) {};
\node (4) at (-240:1) {};
\node (5) at (-300:1) {};
\node (6) at (-360:1) {};
\draw[line width=6.3pt] (1) to (-80:.58);
\draw[line width=6.3pt] (-320:.58) to (5);
\draw[line width=6.3pt] (-200:.58) to (3);
\draw[line width=3.5pt,color=white] (1) to (-80:.58);
\draw[line width=3.5pt,color=white] (-320:.58) to (5);
\draw[line width=3.5pt,color=white] (-200:.58) to (3);
\draw[line width=9.5pt,bend right=25,color=white] (4) to (-100:.41);
\draw[line width=9.5pt,bend right=25,color=white] (2) to (-340:.41);
\draw[line width=9.5pt,bend right=25,color=white] (6) to (-220:.41);
\draw[line width=6.3pt,bend right=25] (4) to (-100:.41);
\draw[line width=6.3pt,bend right=25] (2) to (-340:.41);
\draw[line width=6.3pt,bend right=25] (6) to (-220:.41);
\draw[line width=3.5pt,bend right=25,color=white] (4) to (-99:.42);
\draw[line width=3.5pt,bend right=25,color=white] (2) to (-339:.42);
\draw[line width=3.5pt,bend right=25,color=white] (6) to (-219:.42);
\draw[line width=1.4pt,bend left=15] (-174:1) to (-185:1.5);
\draw[line width=4.4pt,bend right=15,color=white] (-186:1) to (-175:1.5);
\draw[line width=1.4pt,bend right=15] (-186:1) to (-175:1.5);
\draw[line width=1.4pt,bend right=15] (-6:1) to (5:1.5);
\draw[line width=4.4pt,bend left=15,color=white] (6:1) to (-5:1.5);
\draw[line width=1.4pt,bend left=15] (6:1) to (-5:1.5);
\draw[line width=3.5pt] (0,0) circle (1cm);
\node (1l) at (-52:1.3) {\small{+}};
\node (2l) at (-68:1.3) {\small{-}};
\node (3l) at (-112:1.3) {\small{+}};
\node (4l) at (-130:1.3) {\small{-}};
\node (5l) at (-174:1.75) {\small{-}};
\node (6l) at (-189:1.75) {\small{+}};
\node (7l) at (-232:1.25) {\small{+}};
\node (8l) at (-250:1.25) {\small{-}};
\node (9l) at (-291:1.25) {\small{+}};
\node (10l) at (-308:1.25) {\small{-}};
\node (11l) at (-353:1.75) {\small{-}};
\node (12l) at (-368:1.75) {\small{+}};
\end{tikzpicture}}
\end{tabular}
\caption{Replacing a non-standard orientation double-$\Delta$-move $f_{\vec{v}}$ with a standard-orientation double-$\Delta$-move $f$ in a manner that does not change the underlying links.}
\label{fig: double-delta orientation bypass}
\end{figure}

\begin{theorem}
\label{thm: double-delta move}
Let $L$ and $L'$ be a pair of classical links that are separated by a single double-$\Delta$-move.  Then $f(L) - f(L')$ is divisible by $A^{36}-A^{32}+A^{28}-A^{24}-A^{12}+A^8-A^4+1$.  Furthermore, this divisor is maximal for all classical links with respect to the double-$\Delta$-move.
\end{theorem}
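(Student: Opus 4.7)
The plan is to follow the paradigm established by Theorem 3.6 for the $\Delta$-move: reduce to the standard-orientation case, recognize the move as rotational so that Proposition 3.4 applies, and then perform a (finite but lengthy) closure calculation to pin down the gcd.

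First I would use the argument illustrated in Figure \ref{fig: double-delta orientation bypass} to reduce to a standard orientation 6-tangle. That figure shows that any non-standard-orientation double-$\Delta$-move may be bypassed by performing Reidemeister II moves on any pairs of strands whose orientations are not in the alternating pattern $+-+-+-+-+-+-$, and then redefining the tangle boundary. Since Reidemeister II moves do not change the underlying link, it suffices to verify divisibility and maximality for pairs of links $L,L'$ related by a single standard-orientation double-$\Delta$-move.

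Next I would observe that the standard-orientation double-$\Delta$-move is precisely the rotational local move $T_1 \mapsto \rho^6(T_1)$ on a classical $6$-tangle: the right-hand tangle in Figure \ref{fig: double-delta move} is obtained from the left-hand tangle via a clockwise rotation by $\pi = \frac{6\pi}{6}$ radians, and since $k=6$ is even no strand reversal is required, so $\rho^6(T_1) = r^6(T_1) = T_2$. Applying Proposition \ref{thm: divisibility under rotational move} then yields that $f(L) - f(L')$ is divisible by the gcd of the set $\{f(T_1(r^6(m))) - f(T_1(m))\}_{m \in \mathcal{M}_6}$, where $T_1$ denotes the specific double-$\Delta$-tangle on the left of Figure \ref{fig: double-delta move}.

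The bulk of the proof is then an explicit enumeration over $\mathcal{M}_6$, which has $\frac{1}{7}\binom{12}{6} = 132$ noncrossing matchings. Two reductions shrink the work substantially. First, $r^6$ is an involution on $\mathcal{M}_6$, so the matchings either are $r^6$-fixed (in which case they contribute $0$) or come in pairs $(m,r^6(m))$ that contribute the same difference up to sign. Second, because $T_1$ has the very symmetric three-arc internal structure shown in Figure \ref{fig: double-delta move}, most closures $T_1(m)$ simplify after cancellations and isotopy to a small catalog of elementary links (unlinks, Hopf links, $(2,k)$-torus links, twist knots, and a few connected sums thereof), whose auxiliary polynomials are already known or directly computable from the Kauffman bracket. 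Grinding through the resulting differences $f(T_1(r^6(m))) - f(T_1(m))$, taking gcds along the way, should yield the claimed polynomial $A^{36}-A^{32}+A^{28}-A^{24}-A^{12}+A^8-A^4+1$. For the maximality claim, I would exhibit a single explicit pair of classical links $L, L'$ that differ by a double-$\Delta$-move and satisfy $f(L) - f(L') = \pm(A^{36}-A^{32}+A^{28}-A^{24}-A^{12}+A^8-A^4+1)$; the natural candidates are obtained by plugging in a trivial or near-trivial outer tangle $T'$ that isolates exactly one of the nonzero closure differences computed above.

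The main obstacle will be the bookkeeping in the case analysis over $\mathcal{M}_6$: even after collapsing by the $r^6$-involution and by isotopy equivalences among the closures, one must be careful to identify which closure-differences are nonzero and to verify that their gcd is exactly the stated polynomial rather than a proper multiple of it. A secondary difficulty is the maximality step, where one must find an outer tangle $T'$ that realizes the divisor on the nose; without the right choice, one typically only recovers a proper multiple of the divisor, which would prove divisibility but not maximality.
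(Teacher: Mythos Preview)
Your overall strategy is sound and closely parallels the paper's, but several of your specific predictions are off in ways worth flagging.

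The paper does not invoke Proposition \ref{thm: divisibility under rotational move} directly; it applies the more general Proposition \ref{thm: general auxiliary polynomial divisibility} and computes $f(T_2(m))-f(T_1(m))$ over $\mathcal{M}_6$. Of course, since the standard-orientation double-$\Delta$-move is indeed $T_1 \mapsto \rho^6(T_1)$, these amount to the same set of differences. Where the two approaches diverge is in the symmetry reduction. You propose to halve the work using the $r^6$-involution on $\mathcal{M}_6$; the paper instead makes two sharper observations: (i) whenever $m$ contains an arc $(i,i+1)$ with $i$ odd (i.e.\ an arc closing off one doubled band), one has $T_1(m)=T_2(m)$ on the nose, which already kills all but fifteen matchings; and (ii) both $T_1$ and $T_2$ are invariant under rotation by four strands, collapsing those fifteen to just seven. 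Three of the seven again satisfy $T_1(m)=T_2(m)$, leaving only four closures to compute. Your $r^6$-reduction alone leaves many more nontrivial cases, so you would benefit from importing observation (i).

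Your claim that the surviving closures are ``unlinks, Hopf links, $(2,k)$-torus links, twist knots, and a few connected sums thereof'' is incorrect, and this is the one concrete gap in the proposal. The double-$\Delta$ tangle is a $2$-cabling of the $\Delta$ tangle, and the relevant nontrivial closures are genuine $2$-cables: the paper finds that two of the four closures are a $2$-cable of the unknot and a $2$-cable of the trefoil. These are satellite links whose auxiliary polynomials (e.g.\ $-A^{26}-A^{18}+A^{14}-A^{10}+A^6-A^2$ and $-A^{18}-A^{10}-A^2+A^{-10}+A^{-18}-A^{-22}$) do not come from your catalog. The computation still goes through, but not via the shortcut you anticipate.

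For maximality, your plan of realizing the divisor via a single explicit outer tangle $T'$ does in fact succeed here: one of the four closure differences equals the stated polynomial up to the unit $A^{-12}$. The paper takes a different route, observing that the divisor equals $f(\text{unknot})-f(11_{42})$ and invoking Naik--Stanford to conclude that the Kinoshita--Terasaka knot, having trivial Alexander polynomial, is connected to the unknot by a finite sequence of double-$\Delta$-moves. Either argument establishes maximality.
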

\begin{proof}
As the double-$\Delta$-move involves classical tangles with standard orientation, Proposition \ref{thm: general auxiliary polynomial divisibility} requires that we determine $f(T_2(m))-f(T_1(m))$ for all $C_6 = 132$ elements of $\mathcal{M}_6$.  However, using the endpoint numbering shown in Figure \ref{fig: double-delta move} , it is clear that $T_1(m) = T_2(m)$ for any $m \in \mathcal{M}_6$ that includes an arc of the form $(i,i+1)$ for at least one odd integer $i$.  This leaves fifteen closures for which $f(T_2(m))-f(T_1(m))$ may be nonzero.

Now observe that both tangles $T_1,T_2$ involved in the double-$\Delta$-move are invariant under rotation by four strands.  It follows that $f(T_2(m))-f(T_1(m)) = f(T_2(m'))-f(T_1(m'))$ for any pair of closures $m,m' \in \mathcal{M}_6$ that differ via rotation by four strands.  Among our fifteen remaining closures, this reduces the necessary computations to the seven closures below.

\begin{center}
\scalebox{.9}{
\begin{tikzpicture}
[scale=1,auto=left,every node/.style={circle,inner sep=0pt}]
	\draw[line width=1pt] (0,0) circle (1cm);
	\node[draw,fill,inner sep=1pt] (1) at (-30:1) {};
	\node[draw,fill,inner sep=1pt] (2) at (-60:1) {};
	\node[draw,fill,inner sep=1pt] (3) at (-90:1) {};
	\node[draw,fill,inner sep=1pt] (4) at (-120:1) {};
	\node[draw,fill,inner sep=1pt] (5) at (-150:1) {};
	\node[draw,fill,inner sep=1pt] (6) at (-180:1) {};
	\node[draw,fill,inner sep=1pt] (7) at (-210:1) {};
	\node[draw,fill,inner sep=1pt] (8) at (-240:1) {};
	\node[draw,fill,inner sep=1pt] (9) at (-270:1) {};
	\node[draw,fill,inner sep=1pt] (10) at (-300:1) {};
	\node[draw,fill,inner sep=1pt] (11) at (-330:1) {};
	\node[draw,fill,inner sep=1pt] (12) at (-360:1) {};
	\node (1l) at (-30:1.25) {\small{1}};
	\node (2l) at (-60:1.25) {\small{2}};
	\node (3l) at (-90:1.25) {\small{3}};
	\node (4l) at (-120:1.25) {\small{4}};
	\node (5l) at (-150:1.25) {\small{5}};
	\node (6l) at (-180:1.25) {\small{6}};
	\node (7l) at (-210:1.25) {\small{7}};
	\node (8l) at (-240:1.25) {\small{8}};
	\node (9l) at (-270:1.25) {\small{9}};
	\node (10l) at (-300:1.25) {\small{10}};
	\node (11l) at (-330:1.25) {\small{11}};
	\node (12l) at (-360:1.25) {\small{12}};	
	\draw[thick, bend right=35] (1) to (4);
	\draw[thick, bend right=70] (2) to (3);
	\draw[thick, bend left=15] (5) to (12);
	\draw[thick, bend right=15] (6) to (11);
	\draw[thick, bend right=35] (7) to (10);
	\draw[thick, bend right=70] (8) to (9);	
\end{tikzpicture}
\hspace{.1in}
\begin{tikzpicture}
[scale=1,auto=left,every node/.style={circle,inner sep=0pt}]
	\draw[line width=1pt] (0,0) circle (1cm);
	\node[draw,fill,inner sep=1pt] (1) at (-30:1) {};
	\node[draw,fill,inner sep=1pt] (2) at (-60:1) {};
	\node[draw,fill,inner sep=1pt] (3) at (-90:1) {};
	\node[draw,fill,inner sep=1pt] (4) at (-120:1) {};
	\node[draw,fill,inner sep=1pt] (5) at (-150:1) {};
	\node[draw,fill,inner sep=1pt] (6) at (-180:1) {};
	\node[draw,fill,inner sep=1pt] (7) at (-210:1) {};
	\node[draw,fill,inner sep=1pt] (8) at (-240:1) {};
	\node[draw,fill,inner sep=1pt] (9) at (-270:1) {};
	\node[draw,fill,inner sep=1pt] (10) at (-300:1) {};
	\node[draw,fill,inner sep=1pt] (11) at (-330:1) {};
	\node[draw,fill,inner sep=1pt] (12) at (-360:1) {};
	\node (1l) at (-30:1.25) {\small{1}};
	\node (2l) at (-60:1.25) {\small{2}};
	\node (3l) at (-90:1.25) {\small{3}};
	\node (4l) at (-120:1.25) {\small{4}};
	\node (5l) at (-150:1.25) {\small{5}};
	\node (6l) at (-180:1.25) {\small{6}};
	\node (7l) at (-210:1.25) {\small{7}};
	\node (8l) at (-240:1.25) {\small{8}};
	\node (9l) at (-270:1.25) {\small{9}};
	\node (10l) at (-300:1.25) {\small{10}};
	\node (11l) at (-330:1.25) {\small{11}};
	\node (12l) at (-360:1.25) {\small{12}};	
	\draw[thick, bend right=15] (1) to (6);
	\draw[thick, bend right=15] (7) to (12);
	\draw[thick, bend right=70] (2) to (3);
	\draw[thick, bend right=70] (4) to (5);
	\draw[thick, bend right=70] (8) to (9);
	\draw[thick, bend right=70] (10) to (11);	
\end{tikzpicture}
\hspace{.1in}
\begin{tikzpicture}
[scale=1,auto=left,every node/.style={circle,inner sep=0pt}]
	\draw[line width=1pt] (0,0) circle (1cm);
	\node[draw,fill,inner sep=1pt] (1) at (-30:1) {};
	\node[draw,fill,inner sep=1pt] (2) at (-60:1) {};
	\node[draw,fill,inner sep=1pt] (3) at (-90:1) {};
	\node[draw,fill,inner sep=1pt] (4) at (-120:1) {};
	\node[draw,fill,inner sep=1pt] (5) at (-150:1) {};
	\node[draw,fill,inner sep=1pt] (6) at (-180:1) {};
	\node[draw,fill,inner sep=1pt] (7) at (-210:1) {};
	\node[draw,fill,inner sep=1pt] (8) at (-240:1) {};
	\node[draw,fill,inner sep=1pt] (9) at (-270:1) {};
	\node[draw,fill,inner sep=1pt] (10) at (-300:1) {};
	\node[draw,fill,inner sep=1pt] (11) at (-330:1) {};
	\node[draw,fill,inner sep=1pt] (12) at (-360:1) {};
	\node (1l) at (-30:1.25) {\small{1}};
	\node (2l) at (-60:1.25) {\small{2}};
	\node (3l) at (-90:1.25) {\small{3}};
	\node (4l) at (-120:1.25) {\small{4}};
	\node (5l) at (-150:1.25) {\small{5}};
	\node (6l) at (-180:1.25) {\small{6}};
	\node (7l) at (-210:1.25) {\small{7}};
	\node (8l) at (-240:1.25) {\small{8}};
	\node (9l) at (-270:1.25) {\small{9}};
	\node (10l) at (-300:1.25) {\small{10}};
	\node (11l) at (-330:1.25) {\small{11}};
	\node (12l) at (-360:1.25) {\small{12}};	
	\draw[thick, bend left=70] (1) to (12);
	\draw[thick, bend right=70] (2) to (3);
	\draw[thick, bend right=70] (4) to (5);
	\draw[thick, bend right=70] (6) to (7);
	\draw[thick, bend right=70] (8) to (9);
	\draw[thick, bend right=70] (10) to (11);	
\end{tikzpicture}}

\vspace{.1in}

\scalebox{.9}{
\begin{tikzpicture}
[scale=1,auto=left,every node/.style={circle,inner sep=0pt}]
	\draw[line width=1pt] (0,0) circle (1cm);
	\node[draw,fill,inner sep=1pt] (1) at (-30:1) {};
	\node[draw,fill,inner sep=1pt] (2) at (-60:1) {};
	\node[draw,fill,inner sep=1pt] (3) at (-90:1) {};
	\node[draw,fill,inner sep=1pt] (4) at (-120:1) {};
	\node[draw,fill,inner sep=1pt] (5) at (-150:1) {};
	\node[draw,fill,inner sep=1pt] (6) at (-180:1) {};
	\node[draw,fill,inner sep=1pt] (7) at (-210:1) {};
	\node[draw,fill,inner sep=1pt] (8) at (-240:1) {};
	\node[draw,fill,inner sep=1pt] (9) at (-270:1) {};
	\node[draw,fill,inner sep=1pt] (10) at (-300:1) {};
	\node[draw,fill,inner sep=1pt] (11) at (-330:1) {};
	\node[draw,fill,inner sep=1pt] (12) at (-360:1) {};
	\node (1l) at (-30:1.25) {\small{1}};
	\node (2l) at (-60:1.25) {\small{2}};
	\node (3l) at (-90:1.25) {\small{3}};
	\node (4l) at (-120:1.25) {\small{4}};
	\node (5l) at (-150:1.25) {\small{5}};
	\node (6l) at (-180:1.25) {\small{6}};
	\node (7l) at (-210:1.25) {\small{7}};
	\node (8l) at (-240:1.25) {\small{8}};
	\node (9l) at (-270:1.25) {\small{9}};
	\node (10l) at (-300:1.25) {\small{10}};
	\node (11l) at (-330:1.25) {\small{11}};
	\node (12l) at (-360:1.25) {\small{12}};	
	\draw[thick, bend right=40] (1) to (4);
	\draw[thick, bend right=75] (2) to (3);
	\draw[thick, bend left=25] (5) to (12);
	\draw[thick, bend right=75] (6) to (7);
	\draw[thick, bend right=75] (8) to (9);
	\draw[thick, bend right=75] (10) to (11);
\end{tikzpicture}
\hspace{.1in}
\begin{tikzpicture}
[scale=1,auto=left,every node/.style={circle,inner sep=0pt}]
	\draw[line width=1pt] (0,0) circle (1cm);
	\node[draw,fill,inner sep=1pt] (1) at (-30:1) {};
	\node[draw,fill,inner sep=1pt] (2) at (-60:1) {};
	\node[draw,fill,inner sep=1pt] (3) at (-90:1) {};
	\node[draw,fill,inner sep=1pt] (4) at (-120:1) {};
	\node[draw,fill,inner sep=1pt] (5) at (-150:1) {};
	\node[draw,fill,inner sep=1pt] (6) at (-180:1) {};
	\node[draw,fill,inner sep=1pt] (7) at (-210:1) {};
	\node[draw,fill,inner sep=1pt] (8) at (-240:1) {};
	\node[draw,fill,inner sep=1pt] (9) at (-270:1) {};
	\node[draw,fill,inner sep=1pt] (10) at (-300:1) {};
	\node[draw,fill,inner sep=1pt] (11) at (-330:1) {};
	\node[draw,fill,inner sep=1pt] (12) at (-360:1) {};
	\node (1l) at (-30:1.25) {\small{1}};
	\node (2l) at (-60:1.25) {\small{2}};
	\node (3l) at (-90:1.25) {\small{3}};
	\node (4l) at (-120:1.25) {\small{4}};
	\node (5l) at (-150:1.25) {\small{5}};
	\node (6l) at (-180:1.25) {\small{6}};
	\node (7l) at (-210:1.25) {\small{7}};
	\node (8l) at (-240:1.25) {\small{8}};
	\node (9l) at (-270:1.25) {\small{9}};
	\node (10l) at (-300:1.25) {\small{10}};
	\node (11l) at (-330:1.25) {\small{11}};
	\node (12l) at (-360:1.25) {\small{12}};	
	\draw[thick, bend left=70] (1) to (12);
	\draw[thick, bend right=70] (2) to (3);
	\draw[thick, bend right=70] (4) to (5);
	\draw[thick, bend right=25] (6) to (11);
	\draw[thick, bend right=40] (7) to (10);
	\draw[thick, bend right=70] (8) to (9);	
\end{tikzpicture}
\hspace{.75in}
\begin{tikzpicture}
[scale=1,auto=left,every node/.style={circle,inner sep=0pt}]
	\draw[line width=1pt] (0,0) circle (1cm);
	\node[draw,fill,inner sep=1pt] (1) at (-30:1) {};
	\node[draw,fill,inner sep=1pt] (2) at (-60:1) {};
	\node[draw,fill,inner sep=1pt] (3) at (-90:1) {};
	\node[draw,fill,inner sep=1pt] (4) at (-120:1) {};
	\node[draw,fill,inner sep=1pt] (5) at (-150:1) {};
	\node[draw,fill,inner sep=1pt] (6) at (-180:1) {};
	\node[draw,fill,inner sep=1pt] (7) at (-210:1) {};
	\node[draw,fill,inner sep=1pt] (8) at (-240:1) {};
	\node[draw,fill,inner sep=1pt] (9) at (-270:1) {};
	\node[draw,fill,inner sep=1pt] (10) at (-300:1) {};
	\node[draw,fill,inner sep=1pt] (11) at (-330:1) {};
	\node[draw,fill,inner sep=1pt] (12) at (-360:1) {};
	\node (1l) at (-30:1.25) {\small{1}};
	\node (2l) at (-60:1.25) {\small{2}};
	\node (3l) at (-90:1.25) {\small{3}};
	\node (4l) at (-120:1.25) {\small{4}};
	\node (5l) at (-150:1.25) {\small{5}};
	\node (6l) at (-180:1.25) {\small{6}};
	\node (7l) at (-210:1.25) {\small{7}};
	\node (8l) at (-240:1.25) {\small{8}};
	\node (9l) at (-270:1.25) {\small{9}};
	\node (10l) at (-300:1.25) {\small{10}};
	\node (11l) at (-330:1.25) {\small{11}};
	\node (12l) at (-360:1.25) {\small{12}};	
	\draw[thick, bend right=40] (1) to (4);
	\draw[thick, bend right=70] (2) to (3);
	\draw[thick, bend right=40] (5) to (8);
	\draw[thick, bend right=70] (6) to (7);
	\draw[thick, bend right=40] (9) to (12);
	\draw[thick, bend right=70] (10) to (11);
\end{tikzpicture}
\hspace{.1in}
\begin{tikzpicture}
[scale=1,auto=left,every node/.style={circle,inner sep=0pt}]
	\draw[line width=1pt] (0,0) circle (1cm);
	\node[draw,fill,inner sep=1pt] (1) at (-30:1) {};
	\node[draw,fill,inner sep=1pt] (2) at (-60:1) {};
	\node[draw,fill,inner sep=1pt] (3) at (-90:1) {};
	\node[draw,fill,inner sep=1pt] (4) at (-120:1) {};
	\node[draw,fill,inner sep=1pt] (5) at (-150:1) {};
	\node[draw,fill,inner sep=1pt] (6) at (-180:1) {};
	\node[draw,fill,inner sep=1pt] (7) at (-210:1) {};
	\node[draw,fill,inner sep=1pt] (8) at (-240:1) {};
	\node[draw,fill,inner sep=1pt] (9) at (-270:1) {};
	\node[draw,fill,inner sep=1pt] (10) at (-300:1) {};
	\node[draw,fill,inner sep=1pt] (11) at (-330:1) {};
	\node[draw,fill,inner sep=1pt] (12) at (-360:1) {};
	\node (1l) at (-30:1.25) {\small{1}};
	\node (2l) at (-60:1.25) {\small{2}};
	\node (3l) at (-90:1.25) {\small{3}};
	\node (4l) at (-120:1.25) {\small{4}};
	\node (5l) at (-150:1.25) {\small{5}};
	\node (6l) at (-180:1.25) {\small{6}};
	\node (7l) at (-210:1.25) {\small{7}};
	\node (8l) at (-240:1.25) {\small{8}};
	\node (9l) at (-270:1.25) {\small{9}};
	\node (10l) at (-300:1.25) {\small{10}};
	\node (11l) at (-330:1.25) {\small{11}};
	\node (12l) at (-360:1.25) {\small{12}};	
	\draw[thick, bend right=40] (3) to (6);
	\draw[thick, bend right=70] (4) to (5);
	\draw[thick, bend right=40] (7) to (10);
	\draw[thick, bend right=70] (8) to (9);
	\draw[thick, bend right=40] (11) to (2);
	\draw[thick, bend right=70] (12) to (1);
\end{tikzpicture}}
\end{center}

Among these seven remaining closures, it may be shown that we still have $T_1(m) = T_2(m)$ for all three closures in the top row.  For the first two closures in the second row, $f(T_1(m))$ and $f(T_2(m))$ are $A^{24} + A^{16} + A^8 + 1$ and $A^{20} + A^{12} + A^8 + 2 - A^{-4} + A^{-8} - A^{-12}$ (in some order).  For the last two closures in the second row, $T_1(m)$ and $T_2(m)$ are a 2-cable of the unknot and a $2$-cable of the trefoil (in some order), giving $f(T_1(m))$ and $f(T_2(m))$ of $-A^{26} - A^{18} + A^{14} - A^{10} + A^6 - A^2$ and $-A^{18} - A^{10} - A^2 + A^{-10} + A^{-18} - A^{-22}$.  Taking the greatest common divisor of these non-trivial differences gives the desired result.

To see that our divisor is maximal among all classical links, notice that $A^{36} - A^{32} + A^{28} - A^{24} - A^{12} + A^8 - A^4 + 1 = f(1) - f(K)$, where $K = 11_{42}$ is the Kinoshita-Terasaka knot.  As the Kinoshita-Terasaka has the same Alexander polynomial as the unknot, it follows from the work of Naik and Stanford \cite{NaikStanford} that it may be transformed into the unknot via a finite sequence of double-$\Delta$-moves.
\end{proof}

One significant application of Theorem \ref{thm: double-delta move} involves S-equivalence of knots.  A pair of classical knots $K,K'$ are said to be S-equivalent if their Seifert matrices are related by a sequence of elementary enlargements and similarity.  Knots in the same S-equivalence class share many interesting properties, such as having identical Alexander polynomials and isometric Blanchfield pairings.  More pertinent to this paper is the work of Naik and Stanford \cite{NaikStanford}, who showed that two oriented classical knots are S-equivalent if and only if they are related by a sequence of double-$\Delta$-moves.  This fact immediately prompts the following corollary of Theorem \ref{thm: double-delta move}:

\begin{corollary}
\label{thm: S-equivalence divisibility}
Let $K,K'$ be classical knots such that $K$ and $K'$ are S-equivalent.  Then $f(K)-f(K')$ is divisible by $A^{36}-A^{32}+A^{28}-A^{24}-A^{12}+A^8-A^4+1$.
\end{corollary}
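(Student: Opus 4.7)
The plan is to derive this corollary as an essentially immediate consequence of Theorem \ref{thm: double-delta move} combined with the topological characterization of S-equivalence due to Naik and Stanford \cite{NaikStanford}. The key algebraic observation is that the set of polynomials in $\Z[A,A^{-1}]$ divisible by a fixed polynomial $p(A)$ is closed under addition, so a telescoping argument reduces divisibility along a finite chain of moves to divisibility across a single move.

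More specifically, suppose $K$ and $K'$ are S-equivalent classical knots. First I would invoke the Naik--Stanford theorem to produce a finite sequence of classical knots $K = K_0, K_1, K_2, \ldots, K_n = K'$ such that, for each $0 \leq i \leq n-1$, the knots $K_i$ and $K_{i+1}$ differ by a single double-$\Delta$-move (possibly of non-standard orientation, which is permitted by Theorem \ref{thm: double-delta move} via the orientation-bypass argument illustrated in Figure \ref{fig: double-delta orientation bypass}). Next I would apply Theorem \ref{thm: double-delta move} to each consecutive pair, concluding that $f(K_i) - f(K_{i+1})$ is divisible by $A^{36} - A^{32} + A^{28} - A^{24} - A^{12} + A^8 - A^4 + 1$ for every $i$.

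To finish, I would write the telescoping identity
\[
f(K) - f(K') \;=\; \sum_{i=0}^{n-1} \bigl(f(K_i) - f(K_{i+1})\bigr),
\]
and observe that since each summand on the right-hand side is divisible by the stated polynomial, so is their sum. This yields the desired divisibility of $f(K) - f(K')$.

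There is no real obstacle here beyond correctly citing the two ingredients; the corollary is a formal consequence of Theorem \ref{thm: double-delta move} and the Naik--Stanford characterization. The only subtlety worth flagging is ensuring that the double-$\Delta$-moves produced by Naik--Stanford are compatible with Theorem \ref{thm: double-delta move} regardless of which local orientation they carry, but this is exactly what the reduction to standard orientation accomplishes within the proof of Theorem \ref{thm: double-delta move} itself.
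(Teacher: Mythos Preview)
Your proposal is correct and follows exactly the approach of the paper, which presents the corollary as an immediate consequence of Theorem~\ref{thm: double-delta move} together with the Naik--Stanford characterization of S-equivalence via double-$\Delta$-moves. You have simply spelled out the telescoping step that the paper leaves implicit.
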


\subsection{The Virtual Rotation Move}
\label{subsec: virtual rotation}

Most of the results in this section do not easily generalize to virtual tangles.  This derives from the fact that the virtual move of Figure \ref{fig: virtual crossing replacement}, which was necessary for our derivation of Theorem \ref{thm: auxiliary polynomial decomposition}, gives all virtual crossings in $T(m) - T$ a ``preferred" quadrant that is fixed as $T$ undergoes the local move.  In particular, for the rotational move $T \mapsto \rho^k(T)$, the modified closures of Theorem \ref{thm: auxiliary polynomial decomposition} do not obey $\widetilde{\rho^k(T)}\kern-0pt ^B(m) = \widetilde{T}^B(r^k(m))$ if $m$ contains at least one virtual crossing.

Luckily, some local moves $T_1 \mapsto T_2$ involving virtual tangles are simple enough that it is possible to ignore Theorem \ref{thm: auxiliary polynomial decomposition} and manually calculate a similar, better-suited decomposition for $f(T_1 \cup T')$ and $f(T_2 \cup T')$.  One such move is a generalization of the ``semi-mutation" move from Theorem \ref{thm: 2-tangles, semi-mutation} to virtual $2$-tangles.

\begin{theorem}
\label{thm: virtual 2-tangle rotation}

Let $T$ be a virtual $2$-tangle with standard orientation.  For any $2$-tangle $T'$ with compatible orientation, $f(T \cup T') - f(\rho^1(T) \cup T')$ is divisible by $f(T(m_1)) - f(T(m_2))$.

\end{theorem}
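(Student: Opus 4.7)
The plan is to bypass Theorem~\ref{thm: auxiliary polynomial decomposition} entirely, working instead at the level of the bracket polynomial via Proposition~\ref{thm: bracket polynomial decomposition}. That proposition applies to virtual tangles without the quadrant-fixing obstruction from Figure~\ref{fig: virtual crossing replacement}, which is precisely what prevents Theorem~\ref{thm: auxiliary polynomial decomposition} from behaving nicely under rotation of a tangle with virtual crossings.

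First I would invoke Proposition~\ref{thm: bracket polynomial decomposition} on both $T \cup T'$ and $\rho^1(T) \cup T'$; since the coefficients $q_m \in \Z[A,A^{-1}]$ depend only on $T'$, the two decompositions share identical $q_m$. Next I would use the identity $\rho^1(T)(m) = T(r^1(m))$ from the paragraph preceding Proposition~\ref{thm: divisibility under rotational move}, which remains valid for \emph{every} $m \in \mathcal{P}_2$ at the level of the unmodified closures. Combined with the invariance of the bracket polynomial under a global rotation of the diagram and under the simultaneous reversal of all strands in a tangle (the reversal step appearing in the definition of $\rho^k$ for odd $k$), this yields $\langle \rho^1(T)(m) \rangle = \langle T(r^1(m)) \rangle$ for every $m$.

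On $\mathcal{P}_2 = \lbrace m_1, m_2, m_3 \rbrace$, direct inspection gives $r^1(m_1) = m_2$, $r^1(m_2) = m_1$, and crucially $r^1(m_3) = m_3$. Subtracting the two decompositions therefore cancels the entire $m_3$ contribution and produces
\[
\langle T \cup T' \rangle - \langle \rho^1(T) \cup T' \rangle = (q_{m_1} - q_{m_2}) \bigl( \langle T(m_1) \rangle - \langle T(m_2) \rangle \bigr).
\]
To pass from bracket polynomials to auxiliary polynomials, I would note that rotation and simultaneous strand reversal each preserve writhe, so $w(\rho^1(T) \cup T') = w(T \cup T')$, and that neither noncrossing closure introduces any classical crossings, so $w(T(m_1)) = w(T(m_2))$. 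The normalizing factors $(-A^3)^{-w}$ are then units in $\Z[A,A^{-1}]$, and divisibility of the bracket difference by $\langle T(m_1) \rangle - \langle T(m_2) \rangle$ converts cleanly to divisibility of $f(T \cup T') - f(\rho^1(T) \cup T')$ by $f(T(m_1)) - f(T(m_2))$.

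The main obstacle I anticipate is establishing $\langle \rho^1(T)(m_3) \rangle = \langle T(m_3) \rangle$ with care, since $m_3$ contributes a virtual crossing to the closure diagram. The resolution is that this identification takes place \emph{before} any virtual-crossing modification is applied: a global $\pi/2$ rotation of the whole diagram sends $T(r^1(m_3))$ to $\rho^1(T)(m_3)$ (combined with a strand reversal inside $T$, which is invisible to the bracket), and the bracket polynomial is blind to such a rotation. Once that subtlety is dispensed with, the remainder of the argument is routine writhe bookkeeping.
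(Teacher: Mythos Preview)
Your argument is correct, and it is genuinely more economical than the paper's. Both proofs begin identically, applying Proposition~\ref{thm: bracket polynomial decomposition} to $T \cup T'$ and $\rho^1(T) \cup T'$ with the same coefficients $q_m$ and invoking $\langle \rho^1(T)(m_i) \rangle = \langle T(r^1(m_i)) \rangle$. The divergence comes next: you subtract immediately at the bracket level, so the $m_3$ contribution cancels \emph{before} any orientation issue arises, and the passage to auxiliary polynomials only involves the noncrossing closures $T(m_1), T(m_2)$ with equal writhe. The paper instead converts each bracket decomposition to an $f$-decomposition \emph{separately}; since $T(m_3)$ is not orientation-compatible, this forces a skein substitution $\langle T(m_3) \rangle = A^{-1}\langle T(m_v)\rangle - A^{-2}\langle T(m_2)\rangle$ introducing the auxiliary closure $T(m_v)$, and only after tracking those extra terms through the writhe normalization does the paper subtract and conclude. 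Your route sidesteps the $T(m_v)$ machinery entirely; the paper's route has the minor side benefit of exhibiting an explicit $f$-decomposition of each link individually, in the spirit of Theorem~\ref{thm: auxiliary polynomial decomposition}, rather than only of their difference.
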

\begin{proof}
Denoting the closures as in Figure \ref{fig: 4-strand closures}, Proposition \ref{thm: bracket polynomial decomposition} immediately gives the following decompositions, where $q_i \in \Z[A,A^{-1}]$:

$$\langle T \cup T' \rangle = q_1 \langle T(m_1) \rangle + q_2 \langle T(m_2) \rangle + q_3 \langle T(m_3) \rangle$$
$$\langle \rho^1(T) \cup T' \rangle = q_1 \langle \rho^1(T(m_1)) \rangle + q_2 \langle \rho^1(T(m_2)) \rangle + q_3 \langle \rho^1(T(m_3)) \rangle$$
$$= q_1 \langle T(m_2) \rangle + q_2 \langle T(m_1) \rangle + q_3 \langle T(m_3)) \rangle$$

Translating from the Kauffman bracket to the auxiliary polynomial requires that we replace $T(m_3)$ with closures that respect the standard orientation.  The Kauffman skein relation gives $\langle T(m_3) \rangle = A^{-1} \langle T(m_v) \rangle - A^{-2} \langle T(m_2) \rangle$, with $T(m_v)$ as shown in the upper-right corner of Figure \ref{fig: n=2 tangle decompositions}.  Absorbing the various writhe terms $(-A^3)^{-w}$ from each $f(T(m_i))$ into the the leading Laurent polynomials gives the following decompositions, where $p_i \in \Z[A,A^{-1}]$ and $T_r$ denotes the tangle obtained by reversing every strand in $T$.

$$f(T \cup T') = p_1 f(T(m_1)) + p_2 f(T(m_2)) + \left[ p_3 f(T(m_v)) - p_4 f(T(m_2)) \right]$$
$$f(\rho^1 (T) \cup T') = p_1 f(T_r(m_2)) + p_2 f(T_r(m_1)) + \left[ p_3 f(T_r(m_v)) + p_4 f(T_r(m_2)) \right]$$

Noting that $f(T_r(m))=f(T(m))$ for any closure $m$, we conclude that $f(\rho^1 (T) \cup T') - f(T \cup T')$ must be divisible by $f(T_1(m)) - f(T(m_2))$.
\end{proof}

\end{document}